\documentclass[reqno]{amsart}
\usepackage{amssymb}
\usepackage{amsmath}
\usepackage{amsthm}
\usepackage{mathtools}
\usepackage{graphicx}
\usepackage{comment}
\usepackage{caption}
\usepackage{calc}
\usepackage[font=small,labelfont=small]{caption}
\usepackage[style=numeric,maxbibnames=99]{biblatex}
\DeclareNameAlias{default}{last-first}

\usepackage{tikz}
\usepackage{tikz-3dplot}
\tdplotsetmaincoords{70}{110} % <-- this defines tdplot_main_coords

\usetikzlibrary{calc}
\usetikzlibrary{arrows}

\usepackage{hyperref}
\hypersetup{colorlinks}

\usepackage{amssymb, mathrsfs, amsfonts, amsmath}

\makeatletter
\def\part#1{%
  \refstepcounter{part}%
  \bigskip
  \begin{center}
    \normalfont\bfseries
    \Large Part~\thepart: #1
  \end{center}
  \medskip
}
\makeatother

\usepackage{tikz}
\usetikzlibrary{arrows}

\usepackage[english]{babel}

\usepackage[margin=1.4in, centering]{geometry}

\DeclareSymbolFont{bbold}{U}{bbold}{m}{n}
\DeclareSymbolFontAlphabet{\mathbbold}{bbold}
\newtheorem{thm}{Theorem}
\newtheorem{prop}[thm]{Proposition}
\newtheorem{lem}[thm]{Lemma}
\newtheorem{cor}[thm]{Corollary}
\theoremstyle{definition}
\newtheorem{defn}[thm]{Definition}
\newtheorem{ex}[thm]{Example}
\theoremstyle{remark}
\newtheorem{rem}[thm]{Remark}

\newtheoremstyle{plain-bold-number}% name
  {6pt}{6pt}% Space above/below
  {\itshape}% Body font
  {}% Indent
  {\bfseries}% Head font (Theorem)
  {.}% Punctuation after head
  { }% Space after head
  {\thmname{#1}\thmnumber{ \bfseries #2}\thmnote{ (#3)}}% Head spec
\makeatother

\theoremstyle{plain-bold-number}
\newtheorem{atheorem}{Theorem}

\usepackage{graphicx} % Required for inserting images

\newcommand{\R}{\mathbb{R}}

\newcommand{\N}{\mathbb{N}}

\tikzset{vtx/.style={circle, fill, inner sep=1.5pt}}

\title{Falconer-type results for any finite graph with multiple pins}
\author[T. Borges, B. Foster, Y. Ou, E. Palsson, F. Romero Acosta]{Tainara Borges, Ben Foster, Yumeng Ou, Eyvindur Palsson, Francisco Romero Acosta}

\address[T. Borges]{Department of Mathematics, University of Pennsylvania, Philadelphia, PA 19104, USA}\email{tborges@sas.upenn.edu}
\address[B. Foster]{Department of Mathematics, Washington University in St. Louis, St. Louis, MO 63130, USA}\email{bfoster@wustl.edu}
\address[Y. Ou]{Department of Mathematics, University of Pennsylvania, Philadelphia, PA 19104, USA}\email{yumengou@sas.upenn.edu}
\address[E. Palsson]{Department of Mathematics, Virginia Tech, Virginia, VA 24061, USA}\email{palsson@vt.edu}
\address[F. Romero Acosta]{MSI, The Australian National University, ACT, Australia}\email{Francisco.Romero@anu.edu.au}

\begin{document}
\maketitle

\begin{abstract}
A generalization of the celebrated Falconer distance problem asks for a graph $G=(\mathcal{V},\mathcal{E})$, with vertex set $\mathcal{V}$ and edge set $\mathcal{E}$, how large the Hausdorff dimension of a compact set $E\subset \mathbb{R}^d$, $d\geq 2$, needs to be to guarantee that the distance graph
$$ \Delta^{G}(E):= \lbrace (|x_{i}-x_{j}|)_{(v_i,v_j)\in\mathcal{E}} : x_1,\ldots,x_{|\mathcal{V}|}\in E \rbrace $$
has positive $|\mathcal{E}|$-dimensional Lebesgue measure. Here we represent the edges in $\mathcal{E}$ as ordered pairs of vertices $(v_i,v_j)$ with $i<j$. Many results exist for particular graphs, such as trees and simplices. Some general results exist, but they require intricate calculations, such as computing Fourier decay of the natural measure on the configuration set or mapping properties of associated Fourier integral operators. In this paper, using the graph theory notion of $k$-degeneracy, which is easy to compute, we obtain a non-trivial dimensional threshold $\frac{d+k}{2}$, $d>k$, for any non-trivial graph $G$. Key ingredients for our result are identifying pinned stars as the right building blocks for a general graph as well as refining a Fubini type argument due to Taylor and the third named author. We further generalize this to graphs with multiple pins by introducing the $k$-admissibility of a graph, a generalization of $k$-degeneracy that takes pins into account, as well as by extending the Fubini argument to the multiple pinned setting. Not only do we obtain non-trivial results in high enough dimensions for any distance graph, but for particular graphs (such as cycles) our results are also strong and improve the previously best known results. Our methods extend to general two point configurations, contingent on results being available for the appropriate star building blocks. %In the hypergraph setting, where one considers point configurations with multiple points, we showcase how our methods extend to that setting through looking at the particular configuration that calculates areas of triangles. Recently, Shmerkin and Yavicoli resolved in a strong way a conjecture about the dimensional threshold needed for obtaining a positive Lebesgue measure for areas of triangles in the plane. We need refined versions of their results in the plane and obtain results in higher dimensions that are of independent interest. These results form building blocks that allow us to obtain results for many different types of graphs where triangles are connected and areas are computed.
\end{abstract}

\section{Introduction}

The celebrated Falconer distance problem, which can be viewed as a continuous analogue of the Erd\H{o}s distinct distance problem, asks for a compact set $E\subset\mathbb{R}^d$, $d\geq 2$, how large its Hausdorff dimension $\dim_{H}(E)$ needs to be to guarantee that its distance set
$$ \Delta(E) = \lbrace |x-y| : x,y\in E \rbrace $$
has positive Lebesgue measure. Falconer showed the threshold $\frac{d}{2}$ was necessary and conjectured it was sufficient \cite{Falconer85}. In recent years there has been much progress on this conjecture with the current best thresholds being $\frac{5}{4}$ when $d=2$ and $\frac{d}{2} + \frac{1}{4} - \frac{1}{8d+4}$ when $d\geq 3$ \cite{GIOW20,DORZ23}. A classic variant, the pinned Falconer distance problem, similarly asks how large $\dim_{H}(E)$ needs to be to guarantee the existence of a pin $x\in E$ such that the pinned distance set
$$ \Delta_{x}(E) = \lbrace |x-y| : y\in E \rbrace $$
has positive Lebesgue measure. Any threshold established for the pinned problem implies the same threshold works for the original problem. Surprisingly, due to a conversion mechanism by Liu \cite{Liu19}, all the best thresholds for the Falconer distance problem also hold for the pinned variant. Yet another classic variant is asking for the distance set $\Delta(E)$ to have non-empty interior, instead of merely positive Lebesgue measure. Mattila and Sj\"{o}lin \cite{MattilaSjolin} obtained the threshold $\frac{d+1}{2}$, $d\geq 2$, for this problem and no improvements have been made on it since their result. Finally one can even combine variants and ask for non-empty interior of the pinned distance set $\Delta_{x}(E)$ with the best results being due to Peres and Schlag \cite{PS00} with a threshold $\frac{d+2}{2}$, $d\geq 4$, while the first four authors of this paper recently obtained the improved threshold $\frac{12}{5}$ when $d=3$ and the first non-trivial threshold $\frac{7}{4}$ when $d=2$ \cite{pinnedtrees}.

All of the above variants have in common that they focus on the classic Euclidean distance between two points. A big push in the area has been to extend these results to other two point configurations, such as dot products, as well as multipoint configurations, such as triangles and simplices. Such generalizations can help us discover better viewpoints on problems, with an example being the modern proof of the Mattila-Sj\"{o}lin result, as presented, for example, in the classic book by Mattila \cite{Mattilabook2015}, having its origin in a result due to Iosevich, Mourgoglou and Taylor \cite{IMT12} where they extended the result to norms induced by symmetric bounded convex bodies with a smooth boundary and everywhere non-vanishing Gaussian curvature. Generalizations can even lead to breakthroughs in the classical variants, one example being the group action method by Greenleaf, Iosevich, Liu and the fourth named author \cite{GILP15}, developed originally for triangles and simplices but which gives a new viewpoint on the original distance problem and was a key ingredient in the magic formula, due to Liu \cite{Liu19}, that connected the pinned distance problem to the original unpinned distance problem. The literature on such generalizations is vast, so we restrict ourselves to highlighting some main themes. One theme is to consider more general two point configurations $\Phi:\mathbb{R}^{d}\times\mathbb{R}^{d}\rightarrow\mathbb{R}^k$ and their corresponding configuration sets
$$ \Delta^{\Phi}(E) = \lbrace \Phi(x,y) : x,y\in E \} $$
with a good example being the dot product, namely $\Phi(x,y)=x\cdot y$, as studied in \cite{EIT11,EHI13,BMS24}, but see also \cite{IMT12,GIT21}. Another theme is to look at larger configurations, such as triangles and simplices, which was initiated in this setting by \cite{GI12}, although similar questions have been studied for decades in the setting of the Erd\H{o}s distance problem \cite{BMP05}. Just to indicate some configurations that have been studied, then further improvements on triangles and simplices have appeared in \cite{EHI13,GILP15,PRA23,PRA25,IPPS22}, cycles in \cite{GIP17,IMMM25}, areas and volumes appeared in \cite{McDonald21,GM22,SY25}, angles were studied in \cite{Harangi11,HKKMMMS13,IMP16,Mathe17} and somewhat general configurations were handled in \cite{GGIP15,GIT22,GIT24,GIT25} using the language of $\Phi$ configurations.

The configuration function for a triangle $\Phi_{\text{triangle}}(x,y,z) = (|x-y|,|y-z|,|z-x|)$ encodes the side lengths of the triangle and is just built out of the usual distances that appear in the original Falconer distance problem. A natural question arises whether there is a mechanism that takes results on the Falconer distance problem or other natural building blocks and automatically leads to thresholds for triangles or even more generally to any configuration built out of distances. In the setting of chains and trees of distances, such a result has been achieved using distances as building blocks, with the first result being due to Bennett, Iosevich, Taylor \cite{BIT16} where they obtained the threshold $\frac{d+1}{2}$ for getting non-empty interior of chains of distances. This was later extended to trees \cite{IT19}, pinned trees \cite{OT22} and most recently to non-empty interior of pinned trees \cite{pinnedtrees}. All these results are inductive in nature and crucially rely on there being no cycles in the configuration. In \cite{IPPS22} Iosevich, Pham, Pham and Shen used the building block of a $k$-star to obtain results on a pinned $k$-simplex but didn't build any further than that. Finally, a recent result by Greenleaf, Iosevich and Taylor \cite{GIT25} was able to implement an inductive strategy for hypergraphs, where for example they show a result on nonempty interior for a chain of triangles. Their building block is their base shape and then they can get a tree of such configurations through an inductive strategy, but anything with a cycle (other than the basic building block) is out of reach.

In contrast to these inductive strategies the results of Grafakos, Greenleaf, Iosevich and the fourth listed author \cite{GGIP15} as well as those of Greenleaf, Iosevich and Taylor \cite{GIT21,GIT22,GIT24} that apply to $\Phi$-configurations, require the user to verify some intricate calculations such as decay of the Fourier transform of the natural measure induced on the $\Phi$-configuration set or mapping properties of its associated Fourier integral operators. These are applicable on a case by case basis but seem far from establishing a dream result, where one takes any configuration of distances and can state a non-trivial threshold for it, by simply looking at the structure of the configuration. Finally, Chatzikonstantinou, Iosevich, Mkrtchyan and Pakianathan \cite{CIMP2021} were able to establish something resembling a dream theorem for a specific class of graphs called infinitesimally rigid graphs. While more graph based, it still required taking derivatives of a certain distance function, which can be computed on a case by case basis and is not easily determined by quickly glancing at the graph. A theme in many of these previous results is that fairly rigid configurations, such as simplices, or configurations amenable to inductive arguments can be handled. 

One of the main challenges so far is that anything with cycles is hard to handle, with only two results even existing on cycles \cite{GIP17,IMMM25} (where cycles were called necklaces in the first reference), and they only handle even length cycles. The main theorem of this paper establishes a significant step toward the dream result. By introducing the notion of \emph{$k$-admissibility}—a purely graph-theoretic condition depending only on the structure of the pinned point configuration— we get non-trivial thresholds for any configuration built out of distances in high enough dimensions (relative to the complexity of the graph itself). Importantly, our methods can handle cycles and even establish good dimensional thresholds for them. We are able to achieve all of this by identifying building blocks that work for any graph built out of distances.

A starting point towards our main theorem is a classic concept from graph theory on the degeneracy of a graph \cite{LW70}.

\begin{defn}
A graph $G$ is said to be $k$-degenerate if $k$ is the least number such that every induced subgraph of $G$ contains a vertex with $k$ or fewer neighbors.
\end{defn}

For a graph $G=(\mathcal{V},\mathcal{E})$, with vertex set $\mathcal{V}$ and edge set $\mathcal{E}$, Matula and Beck \cite{Matula} described an algorithm that can efficiently compute the degeneracy of a graph in $O(|\mathcal{V}|+|\mathcal{E}|)$ time. In short, the algorithm proceeds by repeatedly removing the vertex with the smallest degree and the degeneracy $k$ is given by the highest degree of any vertex at the time of its removal.

The notion of a $k$-degenerate graph already allows us to handle general graphs of distances. For a graph $G=(\mathcal{V},\mathcal{E})$ denote the $G$-distance set of a compact subset $E\subset\mathbb{R}^d$ by
$$ \Delta^{G}(E):= \lbrace (|x_{i}-x_{j}|)_{(v_i,v_j)\in\mathcal{E}} : x_1,\ldots,x_{|\mathcal{V}|}\in E \rbrace $$
where $\mathcal{V}=\{v_1,v_2,\dots ,v_{|\mathcal{V}|}\}$ and, for convenience, we represent the edges in $\mathcal{E}$ as ordered pairs $(v_i,v_j)$ with $i<j$. We say the graph $G$ is \emph{non-trivial} in this setting if $|\mathcal{E}|\geq 1$ so the $G$-distance set is not vacuous. Having at least one edge immediately implies that all non-trivial graphs we consider have degeneracy at least $1$. To take an example of a $G$-distance set, if $G=K_3$, the complete graph on $3$ vertices, we exactly recover the set of triangles, namely $\Delta^{K_3}(E)=\lbrace (|x_1 - x_2|,|x_1 - x_3|,|x_2 - x_3|) : x_1,x_2,x_3 \in E\rbrace$. 

\begin{thm}\label{thm: nopinthm}
Let $G=(\mathcal{V},\mathcal{E})$ be a non-trivial graph that is $k$-degenerate. Then for any compact set $E\subset\mathbb{R}^d$, $d>k$, with $\dim(E)>\frac{d+k}{2}$ the $|\mathcal{E}|$-dimensional Lebesgue measure of $\Delta^{G}(E)$ is positive.
\end{thm}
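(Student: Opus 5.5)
We argue by induction on the number of vertices, using the degeneracy ordering. Since $G$ is $k$-degenerate, we can list the vertices $v_1,\dots,v_n$ (after relabeling) so that each $v_j$ has at most $k$ neighbors among $v_1,\dots,v_{j-1}$; this is the reverse of the Matula--Beck removal order. Adding $v_j$ then attaches a \emph{pinned star} with at most $k$ edges to the graph already built. The building block we need is therefore a pinned star result: for a compact $E\subset\R^d$ with $\dim E>\frac{d+k}{2}$ (and $d>k$), carrying a Frostman measure $\mu$, and for $\mu^{k}$-almost every (or a positive-measure set of) tuple of distinct pins $(x_1,\dots,x_k)$, the map $y\mapsto (|y-x_1|,\dots,|y-x_k|)$ pushes $\mu$ forward to a measure absolutely continuous on $\R^k$, with $L^2$ density. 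I would prove this by the standard $L^2$ argument. Integrate the squared $L^2$ norm of the pushforward over the pins and pass to Fourier space; this reduces to bounding an energy integral of $\mu$ against the natural measure on the intersection of $k$ spheres, which is a $(d-k)$-sphere of codimension $k$. That measure has Fourier decay of order $|\xi|^{-(d-k)/2}$, and this is exactly where the threshold $\frac{d+k}{2}$ comes from: we need $\dim E>d-\frac{d-k}{2}$. This star estimate is essentially the one used by Iosevich--Pham--Pham--Shen, and I would take it as the base case.

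For the induction, I would follow the Fubini-type argument of Taylor and the third author. Suppose we have a measure $\nu_{j-1}$ on $\R^{|\mathcal E_{j-1}|}$, the pushforward of $\mu^{j-1}$ under the partial distance map, and suppose it is absolutely continuous. Write $\mathcal E_j=\mathcal E_{j-1}\cup S_j$, where $S_j$ is the star at $v_j$. The pushforward $\nu_j$ of $\mu^{j}$ disintegrates as follows. For each configuration $(x_1,\dots,x_{j-1})$ we push $\mu$ forward through the star map pinned at the neighbors of $v_j$, which gives a density $f_{x_{N(j)}}(t)$. Then, for a test set $A\times B$,
\[
\nu_j(A\times B)=\int \mathbf 1_A(\Phi_{j-1}(x))\int_B f_{x_{N(j)}}(t)\,dt\,d\mu^{j-1}(x).
\]
Absolute continuity of $\nu_j$ would follow if the conditional densities are uniformly controlled. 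So I would aim for a statement of the form
\[
\int \|f_{x_{N(j)}}\|_{L^2}^2\,d\mu^{|N(j)|}<\infty,
\]
combined with absolute continuity of $\nu_{j-1}$. Given these, an $L^1$--$L^2$ Cauchy--Schwarz argument shows $\nu_j$ has a density, so $\Delta^G(E)\supset\operatorname{supp}\nu_n$ has positive measure. Edges connect only distinct vertices, and the diagonal sets have $\mu$-measure zero, so degeneracies where pins coincide can be discarded.

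The main obstacle is the Fubini step. The star density depends on the pins $x_{N(j)}$, and these are correlated with the previously formed distances. We therefore cannot simply multiply an $L^1$ bound by an $L^2$ bound; we need the star estimate to hold uniformly enough in the pins. My first attempt would be a stronger, pinned-uniform statement: restrict $\mu$ to a compact subset on which, for all pins in a set $P$ of large $\mu^{k}$-measure, $\|f_{x}\|_{L^2}\le C$. This is possible by Chebyshev, since the averaged $L^2$ norm is finite. We would then run the induction on restricted product measures, losing only a small fraction of mass at each of the finitely many steps, so the restricted measure remains positive. If some vertex has fewer than $k$ earlier neighbors, the same star estimate applies with smaller $k'\le k$ and a weaker threshold. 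Vertices with no earlier neighbors contribute nothing and can be ignored.
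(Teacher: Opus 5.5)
Your proposal is correct in outline. It shares the paper's skeleton: a degeneracy/admissible construction order, with the pinned-star theorem (Theorem~\ref{thm:k starinIPPS}) used as a black-box building block. The propagation step, however, is genuinely different.

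The paper deduces the statement from Theorem~\ref{thm: firststructuralthm} with $m=0$, reducing to a single pin at $v_1$. Theorem~\ref{thm:inductionstepinstructuralthm1} then shows that almost every pin tuple is good in the sense of Definition~\ref{def: goodpin}, via nested full-measure sets $(E_r)^G_{x,y_{(r-1)}}$. Finally, the paper bounds $\mathcal{L}^{|\mathcal{E}|}$ of the configuration set from below by an iterated integral over star sets, choosing points $y_r=y_r(t)$ along the way.

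You instead show that the pushforward $\nu_n$ of the product measure under the full distance map is absolutely continuous. This is a stronger, measure-level conclusion, and it avoids the selection of $y_r(t)$ and the attendant measurability bookkeeping. If you fix a.e.\ pins and run the same induction over the unpinned vertices, it also recovers the paper's multiply pinned statements.

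The \emph{main obstacle} you describe is illusory, so you can drop the Chebyshev/uniform-in-pins restriction and the averaged $L^2$ bound. For a Lebesgue-null $Z$ with slices $Z_s=\{t:(s,t)\in Z\}$,
\[
\nu_j(Z)=\int\int_{Z_{\Phi_{j-1}(x)}} f_{x_{N(j)}}(t)\,dt\,d\mu^{j-1}(x).
\]
Here $Z_s$ is null for Lebesgue-a.e.\ $s$, hence for $\nu_{j-1}$-a.e.\ $s$ by absolute continuity of $\nu_{j-1}$. So the inner integral vanishes for $\mu^{j-1}$-a.e.\ $x$ as soon as $f_{x_{N(j)}}\in L^1$ for a.e.\ pins, and correlations between pins and earlier distances never enter.

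Two small corrections:
\begin{itemize}
\item Work with $\mu_{E_1}\times\cdots\times\mu_{E_n}$ on pairwise separated pieces $E_i$, as the paper does, so the star theorem applies verbatim. Arguing that diagonals are null is not enough on its own.
\item Your Fourier heuristic for the star estimate is inaccurate. The measure on the $(d-k)$-sphere cut out by $k$ spheres lies in a $(d-k+1)$-plane and has no decay in the normal directions. The threshold in \cite{IPPS22} instead comes from projection-type arguments exploiting the dot-product structure of Euclidean distance. Since you use that theorem as a black box, this does not affect your argument.
\end{itemize}
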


The degeneracy of a finite graph is always finite, so this result establishes a non-trivial dimensional threshold for a Falconer type result for any finite non-trivial distance graph in a high enough dimension. To the best of our knowledge, the notion of the degeneracy of a graph was first used in a setting of distance problems by Lyall and Magyar who established Euclidean Ramsey theory results for distance graphs \cite{LM20} under a further general position assumption using very different techniques, due to the completely different setting. If calculating the degeneracy of a graph seems daunting, it might be even easier to find the maximum vertex degree of a graph. The degeneracy of a graph is bounded above by the maximum vertex degree of the graph, as can immediately be seen by the algorithm of Matula and Beck. This immediately leads to the following Corollary.

\begin{cor}
Let $G=(\mathcal{V},\mathcal{E})$ be a non-trivial graph with maximum vertex degree $v_{\text{max}}$. Then for any compact set $E\subset\mathbb{R}^d$, $d > v_{\text{max}}$, with $\dim(E)>\frac{d+v_{\text{max}}}{2}$ the $|\mathcal{E}|$-dimensional Lebesgue measure of $\Delta^{G}(E)$ is positive.
\end{cor}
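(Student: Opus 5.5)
The corollary is a direct consequence of Theorem~\ref{thm: nopinthm} together with the elementary fact that the degeneracy $k$ of $G$ satisfies $1\le k\le v_{\text{max}}$. The plan is therefore to prove this inequality and then check that the hypotheses of the corollary imply those of the theorem.

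\emph{Step 1: $k\le v_{\text{max}}$.} Every induced subgraph $H$ of $G$ has degrees no larger than the corresponding degrees in $G$. So any vertex of $H$ has at most $v_{\text{max}}$ neighbors in $H$. Hence $v_{\text{max}}$ is an admissible value in the definition of degeneracy. Since $k$ is the least such value, $k\le v_{\text{max}}$. Equivalently, in the Matula--Beck algorithm the degree of a vertex at the moment of its removal never exceeds its degree in $G$.

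\emph{Step 2: $k\ge 1$.} As noted after the definition, non-triviality ($|\mathcal{E}|\ge1$) gives $k\ge1$. Indeed, the induced subgraph on the two endpoints of an edge has no vertex with $0$ neighbors. This step is only needed so that Theorem~\ref{thm: nopinthm} is applied with a genuine degeneracy value.

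\emph{Step 3: apply the theorem.} Let $E\subset\mathbb{R}^d$ be compact with $d>v_{\text{max}}$ and $\dim(E)>\frac{d+v_{\text{max}}}{2}$. Step 1 gives $d>v_{\text{max}}\ge k$. It also gives
$$\dim(E)>\frac{d+v_{\text{max}}}{2}\ge\frac{d+k}{2}.$$
Since $G$ is non-trivial and $k$-degenerate, Theorem~\ref{thm: nopinthm} shows that $\Delta^{G}(E)$ has positive $|\mathcal{E}|$-dimensional Lebesgue measure.

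There is no real obstacle here. The only point requiring care is the monotonicity used in Step 3: the threshold $\frac{d+k}{2}$ and the dimension condition $d>k$ both become weaker as $k$ decreases, so replacing $k$ by the larger quantity $v_{\text{max}}$ only strengthens the hypotheses.
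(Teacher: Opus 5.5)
Your proposal is correct and follows the paper's own argument: the paper deduces this corollary from Theorem~\ref{thm: nopinthm} via the bound (degeneracy) $\le v_{\text{max}}$, which it justifies with the Matula--Beck algorithm. Your justification of that bound from the induced-subgraph definition, and the observation that both hypotheses only weaken as $k$ decreases, fill in exactly the details the paper leaves implicit.
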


While this Corollary gives a very quick way to obtain a non-trivial dimensional threshold, we warn the reader that there can be a large discrepancy between the degeneracy of a graph and its maximum vertex degree. For example, a tree can have an arbitrarily large maximum vertex degree by simply piling up leafs at a common vertex while the degeneracy of a tree is always $1$. We note that in the setting of finite fields, such maximal vertex degree results for distance graphs were established by Iosevich and Parshall \cite{IP19} and extended to more general distances by Bright et. al. in \cite{BFHIJPS24}. 

Theorem \ref{thm: nopinthm} follows from an even more general framework that we describe in Section \ref{sec: mainresults}. In order to handle pinned configurations, we generalize the graph theory notion of $k$-degeneracy to what we call $k$-admissibility, which we define in detail and give many examples of in Section \ref{sec: admissibility}. Similar to the setting of $k$-degeneracy we develop an algorithm in Proposition \ref{prop:algorithmkadm} that quickly allows one to find the admissibility number of the graph. Theorem \ref{thm: firststructuralthm} is our main theorem for distances and implies Theorem \ref{thm: nopinthm}. Our method is even more general and can handle general edge vector functions $\Phi(\cdot,\cdot)$ in Theorem \ref{thm: secondstructuralthm}, conditional on having results on the appropriate building blocks. 

Beyond establishing the dream theorem that yields non-trivial thresholds in high enough dimension for any non-trivial configuration built out of distances, our theorem yields strong results for particular configurations, especially those that contain cycles. To showcase this let's define the set of $l$-cycles, $l\geq 3$, generated by a compact set $E\subset\mathbb{R}^d$ by
$$ \Delta^{l-cycle}(E) := \lbrace (|x_1 - x_2|, |x_2 - x_3|, \ldots, |x_{l-1} - x_{l}|, |x_{l} - x_{1}|) : x_1, x_2, \ldots, x_l \in E \rbrace \subset\mathbb{R}^l .$$
For even length cycles, which correspond to $l$ being even, Greenleaf, Iosevich and Pramanik in \cite{GIP17} showed non-empty interior of $\Delta^{l-cycle}(E)$, in the setting of constant gaps where all the lengths are equal, under the dimensional threshold $\dim(E)>\frac{d+3}{2}$, $d\geq 4$. We note they called the cycles necklaces but we use the more common graph theory name cycles here. Their result was very recently improved by Iosevich, Magyar, McDonald and McDonald \cite{IMMM25} where they extend the result for the $4$-cycle (still with constant gap) to $\dim(E)>\frac{53-\sqrt{337}}{12}$ when $d=3$. We emphasize that there are no results for odd cycles, other than the triangle, which corresponds to $l=3$. As an application of Theorem \ref{thm: nopinthm} we obtain a dimensional threshold $\dim(E)>\frac{d+2}{2}$, $d\geq 3$, for a compact set $E\subset\mathbb{R}^d$ to guarantee that $\Delta^{l-cycle}(E)$ the set of $l$-cycles, $l\geq 3$, has positive $l$-dimensional Lebesgue measure. Only in the case $l=3$, which corresponds to triangles, was this result known \cite{IPPS22}, while in finite fields a similar result existed using very different methods \cite{IJMD21}. We give a thorough exposition of a stronger multiple pin version of this result in Section \ref{sec: pinnednecklaces} and use this as an example to show a first step towards the underlying mechanism of the proof of Theorem \ref{thm: firststructuralthm}.

The building block, that drives our results for general pinned graphs of distances, are so called $k$-stars. Let $k\geq 1$ be an integer. By a $k$-star we will mean a graph in $(k+1)$ vertices in which a central vertex is connected by edges to any other vertex, and these are the only vertices in the graph. We will denote such a graph by $S_k$ (see the Figure \ref{fig: stargraph} for $S_7$).

\begin{figure}[h]
    \centering
    \begin{tikzpicture}[scale=1.5, 
    every node/.style={circle, draw, minimum size=6pt, inner sep=0pt},
    label distance=1mm]

% Central vertex
\node[fill=black, label=below:$v_8$] (C) at (0.2,0) {};

% Outer red vertices and edges
\foreach \i in {1,...,7} {
    \node[fill=magenta, label=above:$v_{\i}$] 
        (A\i) at ({cos(360/7*\i)}, {sin(360/7*\i)}) {};
    \draw (C) -- (A\i);
}

\end{tikzpicture}
    \caption{$S_7$, the $7$-star graph, with pins at each leaf.}
    \label{fig: stargraph}
\end{figure}
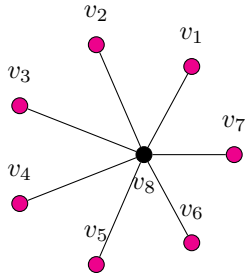

\begin{defn}\label{def: pinnedkstarsinE} Let $E\subset \R^d$ be a compact set.
The set of $k$-stars generated by $E$ with pins at $x_1,x_2,\dots,x_k$ is given by 
$$\Delta^{k-star}_{x_1,\dots,x_k}(E):=\{(|x_1-y|,|x_2-y|,\dots,|x_k-y|)\colon y\in E\}\subset \R^k.$$
\end{defn}

In \cite{IPPS22} the authors showed that if $E\subset \R^d$ is a compact set with $\dim(E)>\frac{d+k}{2}$ then there exist $x_1,x_2,\dots, x_k\in E$ such that $\Delta^{k-star}_{x_1,\dots,x_k}(E)$ has positive $k$-dimensional Lebesgue measure. In fact, they proved something even stronger, which we will recall below in Theorem \ref{thm:k starinIPPS}.

First, let us introduce a couple of definitions. Given a compact set $E\subset \R^d$ an $s$-Frostman measure $\mu$ on $E$ is a compactly supported (on $E$) finite Borel measure satisfying that 
$$\mu(B(x,r))\lesssim r^s,\, \text{ for all }r>0.$$
By Frostman's lemma \cite[Theorem 2.7]{Mattilabook2015}, such measures exist for all $0<s<\dim E$.

\begin{defn}\label{def: restrictedmeasure}
    Given a Frostman measure $\mu$ supported on a compact $E\subset \R^d$ and $F$ a Borel set in $\R^d$, denote by $\mu_F$ the restriction of $\mu$ to $F$. That is, $\mu_F(A)=\mu(A\cap F)$ for any Borel set $A\subset \R^d$.

\end{defn}

We are now ready to state a result proved in \cite{IPPS22}.
\begin{thm}[\cite{IPPS22}]\label{thm:k starinIPPS}

Let $E\subset \R^d$ be a compact set with $\dim(E)>\frac{d+k}{2}$. Take $\mu$ an $s$-Frostman measure on $E$, with $s>\frac{d+k}{2}$. 
    Let $E_1,E_2,\dots E_{k+1}$ be pairwise separated compact subsets of $E$ with $\mu(E_i)>0$ for all $i=1,2,\dots ,k+1$, then  for $\mu_{E_1}\times \mu_{E_2}\times \cdots \times\mu_{E_{k}}$ almost every $(x_1,x_2,\dots ,x_k)\in E_1\times E_2\times \dots \times E_k$, one has that the pushforward measure
    $$(d^{k-star}_{x_1,x_2,\dots ,x_k})_{*}(\mu_{E_{k+1}})\text{ belongs to } L^2(\R^k),$$
    where $d^{k-star}_{x_1,x_2,\dots ,x_k}(y):=(|x_1-y|,|x_2-y|,\dots ,|x_k-y|)$, and, consequently,
$$\mathcal{L}^k\left(\Delta^{k-star}_{x_1,\dots,x_k}(E_{k+1})\right)>0.$$
\end{thm}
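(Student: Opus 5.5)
The plan is to show that the integral
$$\int \big\|(d^{k-star}_{x_1,\dots,x_k})_*(\mu_{E_{k+1}})\big\|_{L^2(\R^k)}^2\, d\mu_{E_1}(x_1)\cdots d\mu_{E_k}(x_k)$$
is finite. Then the $L^2$ conclusion holds for almost every pin tuple. Positivity of $\mathcal{L}^k(\Delta^{k-star}_{x_1,\dots,x_k}(E_{k+1}))$ follows by Cauchy--Schwarz: a nonzero measure with an $L^2$ density $f$ supported on a set $A$ satisfies $0<\|f\|_1^2\le \mathcal{L}^k(A)\|f\|_2^2$.

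To compute the $L^2$ norm, I would write $\nu_x=(d^{k-star}_{x})_*(\mu_{E_{k+1}})$ and mollify, setting $\nu_x^\epsilon=\nu_x*\phi_\epsilon$. Then $\|\nu_x^\epsilon\|_2^2$ is essentially
$$\epsilon^{-k}\,\mu_{E_{k+1}}\times\mu_{E_{k+1}}\{(y,z):\ \big||x_i-y|-|x_i-z|\big|<\epsilon \text{ for all } i\}.$$
Integrating in the pins and using Fubini, I would bound the averaged quantity by
$$\epsilon^{-k}\iint \Big(\mu\{x:\ \big||x-y|-|x-z|\big|<\epsilon\}\Big)^k\, d\mu(y)\,d\mu(z).$$
Here the pins are independent, which is why the $k$-th power appears. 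Separation of $E_{k+1}$ from the $E_i$ keeps all distances bounded below, so the distance maps are smooth on the supports.

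The key quantity is $\epsilon^{-1}\mu\{x: ||x-y|-|x-z||<\epsilon\}$, which measures a thin neighborhood of the bisector-type set $\{x:|x-y|=|x-z|\}$ (really, nearby hyperplanes). It is natural to work on the Fourier side. Using $|x-y|^2-|x-z|^2=2x\cdot(z-y)+|y|^2-|z|^2$ and the lower bound on distances, this slab is comparable to $\{x: |x\cdot \omega - c|\lesssim \epsilon/|y-z|\}$ with $\omega=(z-y)/|z-y|$. Its $\mu$-measure is then governed by $\epsilon|y-z|^{-1}$ times a projection density. I would invoke the $L^2$ theory of orthogonal projections (Mattila's spherical-average/energy estimate): for $\mu$ with $s$-energy finite, the $\mu$-mass of $\delta$-slabs is controlled in an $L^p$-averaged sense over directions. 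To handle the $k$-th power I would expand it as a $k$-fold product, bringing back the independent pins $x_1,\dots,x_k$. Taking the Fourier transform of each slab indicator then gives an integral of the form
$$\int |\widehat{\mu}(\xi_1)|\cdots|\widehat{\mu}(\xi_k)|\,\big|\widehat{\mu}(\textstyle\sum \xi_i)\big|^2\cdots,$$
which reduces, via the group-action/Mattila-integral approach of \cite{GILP15}, to estimating $\int |\widehat\mu(\xi)|^2 \cdots$ against the measure $\mu$ restricted to a $k$-dimensional family of rigid motions.

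The main obstacle is proving that this multilinear Fourier expression is finite exactly when $s>\frac{d+k}{2}$. I would try the following accounting. Each of the $k$ independent pins contributes a decay factor, and the natural scaling gives a dyadic sum $\sum_j 2^{j(k+d-2s)}$, which converges precisely when $s>\frac{d+k}{2}$. To get this bound I would use the stationary-phase decay of the spherical measure, $|\widehat{\sigma}(\xi)|\lesssim|\xi|^{-\frac{d-1}{2}}$, combined with the $s$-energy estimate $\int|\widehat\mu(\xi)|^2|\xi|^{s-d}d\xi<\infty$, applied once per pin. Since each pin lowers the available dimension by one, the configuration effectively lives in dimension $d-k$ transversal directions, which is consistent with the threshold.
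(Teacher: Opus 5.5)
\textbf{There is a genuine gap: the estimate that carries the theorem is asserted, not proved.} Your reduction is fine as far as it goes. It suffices to bound $\int\|(d^{k-star}_{x_1,\dots,x_k})_*(\mu_{E_{k+1}})\|_{L^2}^2\,d\mu_{E_1}(x_1)\cdots d\mu_{E_k}(x_k)$. Fubini over the independent pins turns the mollified version into $\epsilon^{-k}\iint \mu(S_\epsilon(y,z))^k\,d\mu(y)\,d\mu(z)$, where $S_\epsilon(y,z)=\{x: |2x\cdot(z-y)+|y|^2-|z|^2|\lesssim\epsilon\}$. The Cauchy--Schwarz step for positivity is exactly Lemma~\ref{Lemma: L2lemma}. (The paper does not reprove this theorem; it imports it from \cite{IPPS22}.) What you never prove is that this quantity is bounded uniformly in $\epsilon$ once $s>\frac{d+k}{2}$. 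Your last paragraph states the dyadic sum $\sum_j 2^{j(d+k-2s)}$ but gives no mechanism for it, and the tools you name do not supply one:
\begin{itemize}
\item Projection theorems average the squared projection density over directions distributed by surface measure on the sphere. You instead need a $k$-th moment of slab masses along the particular hyperplanes indexed by $(y,z)\sim\mu\times\mu$.
\item Your displayed Fourier expression is not what the computation yields. The phase $e^{2\pi i(\sum_i\tau_i)(|y|^2-|z|^2)}$ does not go away, so the quadratic factor that actually appears is $|\widehat{\tilde\mu}|^2$ for the lift $\tilde\mu$ of $\mu_{E_{k+1}}$ to the paraboloid, not $|\widehat\mu|^2$.
\item Putting absolute values on $\widehat\mu(\xi_1),\dots,\widehat\mu(\xi_k)$ discards the cancellation you need; for fractal $\mu$, $\widehat\mu$ is not even integrable.
\item The group-action method of \cite{GILP15} is built for unpinned simplices.
\item Spherical decay applied ``once per pin'' gives no route to the exponent $\frac{d+k}{2}$.
\item Expanding the $k$-th power to ``bring back the pins'' simply undoes your Fubini step.
\end{itemize}

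\textbf{The missing idea} is to keep the pins as integration variables and use $|x_j-y|^2=|x_j|^2+(-2x_j,1)\cdot(y,|y|^2)$. This makes the squared star map a linear image of $\tilde\mu$ in $\R^{d+1}$; it is the dot-product/projection structure the paper attributes to \cite{IPPS22}. Working with squared distances is harmless by separation, and Plancherel then gives
\[
\int\|\nu_x\|_{L^2}^2\,d\mu_{E_1}(x_1)\cdots d\mu_{E_k}(x_k)=\int\!\!\int_{\R^k}\Bigl|\widehat{\tilde\mu}\Bigl(-2\sum_{j}t_jx_j,\ \sum_j t_j\Bigr)\Bigr|^2\,dt\,d\mu_{E_1}(x_1)\cdots d\mu_{E_k}(x_k).
\]
For $|t|\sim R$, let $\lambda_R$ be the pushforward of $dt\times\mu_{E_1}\times\cdots\times\mu_{E_k}$ under $(t,x)\mapsto(-2\sum_j t_jx_j,\sum_j t_j)$.
\begin{itemize}
\item $\lambda_R$ assigns mass $\lesssim R^{k-1-s}$ to every unit ball. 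The pin with the largest $|t_j|\gtrsim R$ is confined to a ball of radius $\lesssim R^{-1}$, so the Frostman bound gives $R^{-s}$. The constraint on $\sum_j t_j$ leaves a $t$-set of volume $\lesssim R^{k-1}$.
\item $|\widehat{\tilde\mu}|^2$ is essentially constant at unit scale, $\lambda_R$ lives in $\{|\zeta|\lesssim R\}$, and $\int_{|\zeta|\lesssim R}|\widehat{\tilde\mu}|^2\lesssim R^{d+1-s}$.
\item Hence the $R$-th piece is $\lesssim R^{d+k-2s}$, which sums exactly when $s>\frac{d+k}{2}$.
\end{itemize}
So your exponent count is right. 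It comes from using the Frostman condition twice, on the pins and on $\tilde\mu$, not from stationary phase.
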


In the statement above and later on in the paper, if $\nu$ is a measure in $\R^k$ we are using the standard abuse of notation $\nu\in L^p(\R^k)$ to mean that the measure $\nu$ is absolutely continuous with respect to Lebesgue measure of $\R^k$ and its Radon-Nikodym derivative is in $L^p(\R^k)$. Leveraging this result in an efficient way, as well as extending a Fubini style argument pioneered by Taylor and the third named author \cite{OT22} to the multiple pinned setting are some of the key ingredients that allow us to establish our main theorems. In the case of chains, our argument is essentially a \emph{backwards} version of the Fubini type argument developed in \cite{OT22}. However, for general graphs with cycles and multiple pins, it was unclear how the argument in \cite{OT22} would carry over. We resolve this issue by identifying the correct graph building strategy for our $k$-admissible graphs in this new setting and making use of the building blocks $k$-star. An interesting feature arises in our argument, where it is not enough to simply have abundance of the necessary building blocks, as for instance in the special case of cycles (see Section \ref{sec: pinnednecklaces}) but we rather need the full strength of the $L^2$ density results.

While our Fubini style argument is quite general and can be stated for quite general $\Phi(\cdot,\cdot)$ configurations we in general have no analogue of Theorem \ref{thm:k starinIPPS} for configurations other than the Euclidean distance. The proof in \cite{IPPS22} uses delicate projection type arguments that makes use of the Euclidean distance being realized through a dot product and thus it is even unclear how to extend their arguments to other distances, let alone other configurations. Establishing pinned $k$-star results is thus an open exciting problem for many configurations that immediately would lead to a broad set of general theorems using our Theorem \ref{thm: secondstructuralthm}.

\begin{rem}\label{rem:necessarykforkstars}
    One can observe that nontrivial results for pinned $k$-stars along the lines of Theorem \ref{thm:k starinIPPS} are only possible in dimensions $d>k$. Indeed, suppose that $E$ is a compact set in $\R^d$ such that there exists $x_1,x_2,\dots, x_k\in \R^d$  satisfying that $\mathcal{L}^k(\Delta^{k-star}_{x_1,\dots ,x_k}(E))>0$. Since the map $$\Phi_{x_1,\dots ,x_k}:E\rightarrow \R^k,\qquad x_{k+1}\mapsto(|x_1-x_{k+1}|,|x_2-x_{k+1}|,\dots, |x_k-x_{k+1}|)$$ is a Lipchitz map and $\Delta^{k-star}_{x_1,\dots ,x_k}(E)=\Phi_{x_1,x_2,\dots ,x_k}(E)$, it follows that $\dim(E)\geq k$ (and even $\mathcal{L}^k(E)>0$ in the case $k=d$). In particular, there is no nontrivial Hausdorff dimension threshold for obtaining a positive measure of doubly pinned $2$-chains in $\R^2$.
\end{rem}

In this paper the configuration functions $\Phi(\cdot,\cdot)$ we study always have two inputs, as they are applied to vertices connected by edges in our graphs. A natural question emerges about what happens in the hypergraph setting, where the configuration functions can have more than two inputs. In a simple case, where the output of the configuration function is just based on multiple two point configuration functions, such as returning the three distances determined by three points, then clearly one can still use the theory from this paper. In a sequel \cite{BFOPRA26} we study the particular example of areas of triangles. In certain cases we show how results from this paper can even apply in that setting, while in other cases we have a glimpse of a genuine hypergraph theory. While connected to the theme of this paper, the results are also interesting in their own right, with some of the base cases we establish extending known results in area configurations and relying on different methods from the current paper.

Here is an overview of the organization of the paper. In Section \ref{sec: mainresults} we discuss the setup and state the main results. In Section \ref{sec: admissibility} we define and give many examples of the notion of admissibility, the key graph theory notion we use. In Section \ref{sec: graphdismantling} we provide a key graph dismantling lemma. In Section \ref{sec: pinnednecklaces} we show how to prove our results for simpler cases of pinned chains and pinned cycles where we don't need the full strength of $L^2$ density results for the building blocks. Finally, in Section \ref{sec: proofofstructuraltheorems} we provide proof of our main structural theorems. 

\subsection*{Acknowledgement}
Y.O. is supported in part by NSF DMS-2142221 and NSF DMS-2055008. The authors would like to thank Alex Iosevich, Jill Pipher and Maya Sankar for many helpful and inspiring conversations on the topic of this paper.

\section{Setup and main results}\label{sec: mainresults}

Let $G$ be a simple graph with vertex set $\mathcal{V}$ and edge set 
$\mathcal{E}=\{e_1,\dots,e_{|\mathcal{E}|}\}$ and assume that $G$ is non-trivial, that is, $|\mathcal{E}|\geq 1$. 
For convenience, we represent each edge by an ordered pair $e=(u,v)$, even though the graph is undirected.

Let $\mathcal{P}\subset \mathcal{V}$ denote the subset of \emph{pinned vertices}, 
and assume that no two vertices in $\mathcal{P}$ are adjacent, that is, 
no edge in $\mathcal{E}$ connects two vertices of $\mathcal{P}$. 
Set $m:=|\mathcal{P}|$, where $0\le m\le |\mathcal{V}|$ 
(with $m=0$ corresponding to the unpinned case $\mathcal{P}=\emptyset$). 

We fix a canonical ordering of the vertices 
$v_1,\dots,v_{|\mathcal{V}|}$ so that $v_1,\dots,v_m$ are precisely 
the pinned vertices. 
Given a compact set $E\subset \mathbb{R}^d$, we consider 
\emph{edge vectors} generated by $E$, the graph $G$, 
and a continuous mapping
\[
\Phi=(\Phi_{e_1},\dots,\Phi_{e_{|\mathcal{E}|}}):\mathbb{R}^{d|\mathcal{V}|}\to 
\mathbb{R}^{|\mathcal{E}|}.
\]

Each component $\Phi_{e_i}$ is a two-variable function associated with 
the edge $e_i$; if $e_i$ connects the vertices $v_{i_1}$ and $v_{i_2}$, 
then $\Phi_{e_i}$ depends only on the corresponding coordinates 
$x_{i_1},x_{i_2}$ of 
$x=(x_1,\dots,x_{|\mathcal{V}|})\in \mathbb{R}^{d|\mathcal{V}|}$. We also assume that $\Phi_{e_i}$ is symmetric, that is, $\Phi_{e_i}(a,b)=\Phi_{e_i}(b,a)$, $a,b\in \R^d$.
We refer to such a map $\Phi$ as an \emph{edge vector function} for $G$.

Finally, for distinct points $x_1,\dots,x_m\in \mathbb{R}^d$, 
we define the \emph{$m$-pinned $(G,\Phi)$-set} of $E$ by
\[
\Delta^{G,\Phi}_{\mathcal{P},x_1,\dots,x_m}(E)
:=\bigl\{\Phi(x_1,\dots,x_m,y_{m+1},\dots,y_{|\mathcal{V}|})
:\; y_{m+1},\dots,y_{|\mathcal{V}|}\in E \text{ distinct}\bigr\}.
\]

In the particular case where $\Phi$ is chosen to be the Euclidean distance vector for which $\Phi_{e_i}(x,y)=|x-y|$ for all $i$, so each $\Phi_{e_i}$ maps a pair of vertices that are connected by an edge to their Euclidean distance, we often omit the function $\Phi$ from the notation. That was the case in  Definition \ref{def: pinnedkstarsinE}, which corresponds to the $k$-star graph pinned at all the vertices except the central one. In the case of the pinned $k$-star graph, we also omitted $\mathcal{P}$ (choice of locations to pin) from the notation, since it is implicitly understood that the unpinned vertex is the central one.

Other examples for an edge vector function $\Phi$ would be the dot product vector whose components $\Phi_{e_i}$ take dot products of pairs of vertices, or a different option is to replace dot product with the function that computes the area of triangles given by two vertices and the origin.

The main question that we would like to explore here is under what conditions the set $\Delta^{G,\Phi}_{\mathcal{P}, x_1,\cdots,x_m}(E)$ has positive $|\mathcal{E}|$-dimensional Lebesgue measure for some $x_1,x_2,\dots ,x_m\in E$. Since all the graphs we consider here are finite, we can fix a canonical order of edges in the definition of the function $\Phi$. And our result will be independent of the order we choose. In fact, we will always choose the order that is consistent with our definition of \emph{$k$-admissible} below. More remarks will be given later.

In order to avoid degeneracy issues, we oftentimes directly work with a slightly smaller $(G,\Phi)$-set where the points are chosen from pairwise separated subsets of $E$. Let $E_{m+1},\cdots, E_{|\mathcal{V}|}$ be a collection of such sets (usually chosen so that $\mu(E_j)>0$, where $\mu$ is Frostman measure in $E$). We have the following definition
\[
\begin{split}
&\Delta^{G,\Phi}_{\mathcal{P}, x_1,\cdots,x_m}(E_{m+1},\cdots, E_{|\mathcal{V}|})\\
:=&\{\Phi(x_1,\cdots, x_m, y_{m+1},\cdots, y_{|\mathcal{V}|}):\, y_{m+1}\in E_{m+1},\cdots, y_{|\mathcal{V}|}\in E_{|\mathcal{V}|}\}.
\end{split}
\]

\begin{atheorem}[Structural theorem for Euclidean distances]\label{thm: firststructuralthm}
Let $G$ be a graph with vertex set $\mathcal{V}$ and edge set $\mathcal{E}$. Let $\mathcal{P}\subset \mathcal{V}$ be such that no pair of vertices in $\mathcal{P}$ is connected by an edge in $\mathcal{E}$ and denote $m=|\mathcal{P}|$. Assume that $(G,\mathcal{P})$ is $k$-admissible and let $\Phi$ be the Euclidean distance vector function. Then, for $d> k$ and any compact set $E\subset \mathbb{R}^d$ with Hausdorff dimension larger than $\frac{d+k}{2}$, there are distinct points $x_1,\cdots, x_m\in E$ (and even positive $\Pi_{i=1}^{m}\mu_{E_i}$ measure worth of $m$-tuples $(x_1,x_2,\dots ,x_m)$ where $\mu_{E_i}$ are Frostman measures on separated subsets $E_i$ of $E$), such that the pinned $(G,\Phi)$-graph set $\Delta^{G,\Phi}_{\mathcal{P}, x_1,\cdots,x_m}(E)$ has positive $|\mathcal{E}|$-dimensional Lebesgue measure.

Moreover, when $k=1$ and $d\geq 2$, the threshold $\frac{d+1}{2}$ can be replaced with the sharpest currently known pinned Falconer distance thresholds, namely, $\dim(E)>\frac{5}{4}$ for $d=2$ and $\dim(E)>\frac{d}{2}+\frac{1}{4}-\frac{1}{8d+4}$ for $d\geq 3$.
\end{atheorem}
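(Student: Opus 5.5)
Since $k$-admissibility is only defined in Section~\ref{sec: admissibility}, I will take it to mean the natural analogue of degeneracy compatible with pins. The unpinned vertices of $(G,\mathcal{P})$ can be ordered $u_1,\dots,u_r$ (with $r=|\mathcal{V}|-m$) so that each $u_j$ has at most $k$ neighbours among $\mathcal{P}\cup\{u_1,\dots,u_{j-1}\}$. This is the ordering that the graph dismantling lemma of Section~\ref{sec: graphdismantling} provides. We also order the edges so that the edges joining $u_j$ to earlier vertices come in the $j$-th block. Adding $u_j$ then appends the coordinates of a pinned star $S_{k_j}$, with $k_j\le k$, whose pins are the earlier neighbours of $u_j$. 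Since $d>k\ge k_j$ and $s>\frac{d+k}{2}\ge\frac{d+k_j}{2}$, Theorem~\ref{thm:k starinIPPS} is available for every block.

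Fix an $s$-Frostman measure $\mu$ on $E$ with $s>\frac{d+k}{2}$. Choose pairwise separated compact sets $E_1,\dots,E_{|\mathcal{V}|}\subset E$ of positive measure, one for each vertex. Let $\nu_x$ be the pushforward of $\mu_{E_{m+1}}\times\cdots\times\mu_{E_{|\mathcal{V}|}}$ under $(y_{m+1},\dots,y_{|\mathcal{V}|})\mapsto\Phi(x,y)$, where $x=(x_1,\dots,x_m)$. The plan is to show by induction on $j$ that for $\prod_{i\le m}\mu_{E_i}$-a.e.\ $x$, the partial pushforward $\nu^{(j)}_x$ (using only the first $j$ unpinned vertices and their edge blocks) is absolutely continuous on $\R^{e_1+\dots+e_j}$. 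Here $e_j=k_j$ is the size of the $j$-th block. Once $\nu^{(r)}_x$ is absolutely continuous with positive mass, its support lies in $\Delta^{G,\Phi}_{\mathcal{P},x}(E)$, which therefore has positive $|\mathcal{E}|$-dimensional measure.

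The inductive step is a \emph{backwards} Fubini argument in the spirit of \cite{OT22}. Take a test set $A\subset\R^{e_1+\cdots+e_j}$ of Lebesgue measure zero. Write $\nu^{(j)}_x(A)$ as the integral over $(y_{m+1},\dots,y_{m+j-1})$ of the $(d^{k_j\text{-star}}_{z})_*\mu_{E_{m+j}}$-measure of the slice $A_{w}$. Here $z$ is the tuple of earlier neighbours of $u_j$ (pins and earlier $y$'s), and $w$ is the earlier edge vector. By Fubini, $A_w$ is $\mathcal{L}^{k_j}$-null for $\mathcal{L}$-a.e.\ $w$, and by induction the earlier edge vectors have an absolutely continuous law, so this holds for $\nu^{(j-1)}_x$-a.e.\ $w$. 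Theorem~\ref{thm:k starinIPPS} gives that the star pushforward lies in $L^2$ for $\prod\mu_{E_\ell}$-a.e.\ $z$, so it assigns zero mass to null slices. The conclusion is that $\nu^{(j)}_x(A)=0$.

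The main obstacle is that the star exceptional set is null with respect to the product measure of the \emph{tuple of neighbours} $z$. That tuple mixes pins with $y$'s, and the $y$'s are integrated against $\mu$ while the pins are fixed. To control the exceptional set, I will prove the stronger quantitative statement that, jointly in $(x,y_{m+1},\dots)$ under the full product measure, the full-measure set where all star pushforwards are in $L^2$ has full product measure. Then Fubini in the pin variables gives a.e.\ $x$ for which the good set in the $y$'s has full measure. The separation of the $E_i$ and the fact that no two pins are adjacent ensure each star's pins lie in distinct separated sets, so Theorem~\ref{thm:k starinIPPS} applies to each.

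For the $k=1$ refinement the same scheme applies with 1-stars. The needed input is the pinned Falconer statement that for $\mu$-a.e.\ pin the pushforward is absolutely continuous, which holds at the thresholds of \cite{GIOW20,DORZ23} via \cite{Liu19}. Only absolute continuity, rather than $L^2$, is actually used in the argument above.
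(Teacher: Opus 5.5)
Your proof is correct. It shares the paper's skeleton: unpinned vertices are added along a $k$-admissible construction order, each one contributing a pinned star handled by Theorem~\ref{thm:k starinIPPS}. The packaging is genuinely different, though.

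\textbf{How the paper argues (at the level of sets).} It defines good pin tuples through nested full-measure sets $(E_r)^G_{x,y_{(r-1)}}$ (Definition~\ref{def: goodpin}). It proves that almost every pin tuple is good (Theorem~\ref{thm:inductionstepinstructuralthm1}) by induction on the number of unpinned vertices: delete the last vertex and intersect with the iterated full-measure sets $(S_{i_r})$. It then bounds $\mathcal{L}^{|\mathcal{E}|}$ of the configuration set from below by an iterated integral over star sets. At each stage it chooses a point $y_r$ realizing the prescribed star coordinates and applies Lemma~\ref{Lemma: L2lemma} to compact subsets of the good sets.

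\textbf{How you argue (at the level of measures).} You show that the configuration measure $\nu_x$ itself is absolutely continuous, by testing against Lebesgue-null sets and applying Tonelli slice by slice. You handle all exceptional sets at once: each star's null set pulls back to a null cylinder for $\prod_{i\le l}\mu_{E_i}$, and a single Fubini in the pin variables finishes.

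\textbf{What each route buys.} Yours gives a stronger conclusion, namely absolute continuity of $\nu_x$ rather than just positivity of its support. It replaces the nested bookkeeping with one null set and never chooses preimages $y_r(T)$ fibre by fibre. The paper's fibre-by-fibre viewpoint is closer to \cite{OT22}. It is also what the positive-measure-only arguments of Section~\ref{sec: pinnednecklaces} build on.

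\textbf{A small correction.} The pins of each star lie in distinct separated $E_i$ simply because they are distinct vertices. The hypothesis that no two pins are adjacent is what guarantees every edge falls into some block, since a pin--pin edge would be a constant coordinate.

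\textbf{The $k=1$ refinement needs more justification.} The results in \cite{GIOW20,DORZ23} assert the existence of pins with $\mathcal{L}^1(\Delta_x(E))>0$. They prove this by splitting $(d_x)_*\mu$ into a good part controlled in $L^2$ and a part that is small in $L^1$ for most $x$. They do not state that $(d_x)_*\mu_{E_1}$ is absolutely continuous for $\mu$-a.e.\ pin, which is what your scheme consumes. You have two ways to close this.
\begin{enumerate}
\item Extract it from their proofs. The $L^1$ error can be made smaller than any $\varepsilon$ on a pin set of measure at least $1-\varepsilon$. The $\limsup$ of such pin sets along $\varepsilon_n\to 0$ then has full measure, and on it the singular part vanishes.
\item Avoid it. A $1$-admissible pinned graph is a forest with at most one pin per component. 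For such graphs, positive measure of pinned distance sets for a.e.\ pin suffices, via the pruning arguments of Section~\ref{sec: pinnednecklaces} and \cite{OT22}; that a.e.\ property follows from the two-set versions of those theorems by exhaustion.
\end{enumerate}
The paper does not give a separate argument for this part either.
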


The notion of $k$-admissibility used in the theorem above will be introduced in Section \ref{sec: admissibility}. As an important corollary of the structural theorem above, and using the fact that cycles are $2$-admissible (see Subsection \ref{subsec: cyclesare2adm}), we get to the following result.

\begin{cor}[Corollary of the structural theorem for cycles]
 Let $l\geq 3$. Let $C_l$ be an $l$-cycle graph with $m\geq0$ pins listed in $\mathcal{P}$, such that no pair of vertices in $\mathcal{P}$ shares an edge. Then, if $E\subset \R^d$ is a compact set satisfying that $\dim(E)>\frac{d+2}{2}$, there are distinct points $x_1, x_2, \dots ,x_m$ such that one has positive measure for pinned $l$-cycles in the sense that
$$\mathcal{L}^l\left(\Delta_{\mathcal{P},x_1,x_2,\dots ,x_m}^{C_l}(E)\right)>0.$$
\end{cor}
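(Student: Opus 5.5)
The corollary follows from Theorem \ref{thm: firststructuralthm} once we know that $(C_l,\mathcal{P})$ is $2$-admissible for every admissible pin set $\mathcal{P}$. I will check this via Subsection \ref{subsec: cyclesare2adm}, or directly through the algorithm of Proposition \ref{prop:algorithmkadm}. The plan has two steps. The first is to verify $2$-admissibility of $(C_l,\mathcal{P})$, including the edge cases $m=0$ and the maximal independent pin sets. The second is to apply the structural theorem with $k=2$. That theorem needs $d>2$ and $\dim(E)>\frac{d+2}{2}$. It then produces distinct pins $x_1,\dots,x_m\in E$, in fact a positive $\prod\mu_{E_i}$-measure set of them, with $\mathcal{L}^l(\Delta^{C_l}_{\mathcal{P},x_1,\dots,x_m}(E))>0$, since $|\mathcal{E}|=l$ for a cycle.

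For the admissibility step, I expect $k$-admissibility to mirror degeneracy. The idea is to dismantle the graph vertex by vertex. Each removed unpinned vertex should see at most $k$ neighbours that are pinned or still present, so that adding it back corresponds to a pinned $k$-star, the building block of Theorem \ref{thm:k starinIPPS}. For a cycle every vertex has degree exactly $2$, so any removal order gives at most $2$ neighbours per step. The only potential subtlety is the convention for the last unpinned vertices, or for pinned vertices being treated as already fixed. Since pins are pairwise non-adjacent and every degree is $2$, no step should ever need more than a $2$-star. In the unpinned case, the first vertex removed becomes a free starting point and the remainder is a path, which is even $1$-degenerate. So $k=2$ suffices in all cases.

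The remaining point is the dimension restriction. The structural theorem needs $d>k=2$, that is $d\geq 3$. The corollary as stated does not mention $d$, so I would either read in the hypothesis $d\geq3$ or note the following. In $d=2$ the condition $\dim(E)>2$ is impossible, so the statement is vacuous there. By Remark \ref{rem:necessarykforkstars}, no improvement is possible in the plane anyway. The main obstacle is therefore purely combinatorial: confirming carefully that the precise definition of $k$-admissibility, including how pins and the final vertex are counted, gives $2$ and not $3$ for cycles with arbitrary independent pin sets. I would handle this by running the algorithm of Proposition \ref{prop:algorithmkadm} on $C_l$ with a general independent $\mathcal{P}$.
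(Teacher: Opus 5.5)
Your proposal is correct and matches the paper's own route. Every vertex of $C_l$ has degree $2$, so any construction order has back-degree at most $2$ and $(C_l,\mathcal{P})$ is $2$-admissible (Subsection \ref{subsec: cyclesare2adm}); Theorem \ref{thm: firststructuralthm} with $k=2$ then gives the claim for $d\geq 3$, while for $d\leq 2$ the hypothesis $\dim(E)>\frac{d+2}{2}$ can never hold. One aside is overstated but plays no role in the proof: Remark \ref{rem:necessarykforkstars} only obstructs planar results when some unpinned vertex is adjacent to two pins, which never happens for $m\leq 1$, so it does not show that no planar improvement is possible in general.
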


In fact, if one is mainly interested in pinned chains and pinned cycles that are not triangles (with potentially multiple pins), the $L^2$ building blocks for $2$-stars provided in Theorem \ref{thm:k starinIPPS} from \cite{IPPS22} are not necessary, and it is enough to have the weaker positive measure conclusion of that theorem for $2$-stars. We expand on that in Section \ref{sec: pinnednecklaces} where we provide a self-contained proof of the threshold $(d+2)/2$ for pinned chains, as well as  for pinned cycles of length at least $4$. 

Theorem \ref{thm: firststructuralthm} above can be generalized to more general distance functions than the Euclidean distance, provided one has the right building blocks. We now make that precise. The notion of construction order used below will be introduced in Definition \ref{def: contructionorder} in Section \ref{sec: admissibility}.

\begin{defn}\label{def: suitableforstars}
    Let $\alpha\in (0,d)$ and let $\Phi=(\Phi_{1},\Phi_{2}\cdots, \Phi_{|\mathcal{E}|})$ be a continuous edge vector function for a pinned graph $(G,\mathcal{P}=(v_1,\dots ,v_m))$. Let $1\leq p\leq \infty$. We will say that $\Phi$ is $\alpha$-suitable for the stars in $(G,\mathcal{P})$ in $L^p$ sense if there is a construction order $\mathcal{O}=(v_1,v_2,\dots ,v_m,v_{m+1},\dots, v_l)$ such that for all $m+1\leq r\leq l=|\mathcal{V}|$, the following holds. Let $\eta_r$ be the set of indices of vertices of $G$ sharing an edge with $v_r$ and which appear earlier than $v_r$ in the construction order $\mathcal{O}$ and let $\epsilon_r:=|\eta_r|$ is back-degree of $v_r$ according to such ordering.
    Then, for any compact set $E$ with $\dim(E)>\alpha$, and any $\mu$ $s$-Frostman measure on $E$ where $s>\alpha$, if $\{E_{i}\}_{i\in \eta_r}$ are $\epsilon_r$ many separated compact subsets of $E$ with $\mu(E_i)>0$ for all $i\in \eta_r$ then  for $\Pi_{i\in \eta_r}\mu_{E_i}$ almost everywhere $(x_i)_{i\in \eta_r}\in \Pi_{i\in \eta_r} E_i$ one has 
    that 
    $$(d^{\epsilon_r-star,\Phi^r}_{(x_i)_{i\in \eta_r}} )_{*}(\mu_{E_{r}})\text{ belongs to } L^p(\R^{\epsilon_r}),$$
    where 
    $$d^{\epsilon_r-star,\Phi^r}_{(x_i)_{i\in \eta_r}}(y):=(\Phi_{(v_i,v_r)}(x_i,y))_{i\in \eta_r }.$$ In particular,
$$\mathcal{L}^{\epsilon_r}\left(\Delta^{\epsilon_r-star,\Phi^r}_{(x_i)_{i\in \eta_r}}(E_{r})\right)>0.$$
\end{defn}

\begin{atheorem}[Structural theorem for general edge vector functions]\label{thm: secondstructuralthm}
Let $G$ be a graph with vertex set $\mathcal{V}$ and edge set $\mathcal{E}$. Let $\mathcal{P}\subset \mathcal{V}$ be such that no pair of vertices in $\mathcal{P}$ is connected by an edge in $\mathcal{E}$ and denote $m=|\mathcal{P}|$. % Let $(G,\mathcal{P})$ be a $k$-admissible pinned graph with $1\leq k\leq |\mathcal{E}|$.
Let $\alpha\in (0,d)$ and $\Phi=(\Phi_{e_1},\cdots, \Phi_{e_{|\mathcal{E}|}})$ be any given continuous edge vector function of $G$ satisfying that $\Phi$ is $\alpha$-suitable for the stars in $G$ in $L^p$ sense, for some $1\leq p\leq \infty$.
Then, for any compact set $E\subset \mathbb{R}^d$ with $\dim(E)>\alpha$, there exist $x_1,\cdots, x_m\in E$ such that the pinned $(G,\Phi)$-graph set $\Delta^{G,\Phi}_{\mathcal{P},x_1,\cdots, x_m}(E)$ has positive $|\mathcal{E}|$-dimensional Lebesgue measure.
\end{atheorem}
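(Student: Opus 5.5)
We prove Theorem \ref{thm: secondstructuralthm} by a backwards Fubini-type induction along the construction order $\mathcal{O}=(v_1,\dots,v_m,v_{m+1},\dots,v_l)$ supplied by the $\alpha$-suitability hypothesis.

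\textbf{Setup.} Fix $s$ with $\alpha<s<\dim(E)$ and an $s$-Frostman measure $\mu$ on $E$. Choose $l=|\mathcal{V}|$ pairwise separated compact sets $E_1,\dots,E_l\subset E$ with $\mu(E_i)>0$; this is possible since $\mu$ has no atoms. It suffices to find positive $\prod_{i\le m}\mu_{E_i}$-measure worth of pins $(x_1,\dots,x_m)$ such that
\[
\Delta^{G,\Phi}_{\mathcal{P},x_1,\dots,x_m}(E_{m+1},\dots,E_l)
\]
has positive $|\mathcal{E}|$-dimensional measure. Points drawn from separated sets are automatically distinct.

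\textbf{Natural measure.} For fixed pins, define $\nu_{x_1,\dots,x_m}$ as the pushforward of $\mu_{E_{m+1}}\times\cdots\times\mu_{E_l}$ under
\[
(y_{m+1},\dots,y_l)\mapsto\Phi(x_1,\dots,x_m,y_{m+1},\dots,y_l).
\]
Order the edges so that the $\epsilon_r$ back-edges of $v_r$ come in block $r$. The coordinates of $\mathbb{R}^{|\mathcal{E}|}$ then split as $\mathbb{R}^{\epsilon_{m+1}}\times\cdots\times\mathbb{R}^{\epsilon_l}$. Vertices with $\epsilon_r=0$ contribute a trivial factor, and since pins are not adjacent, every edge is counted exactly once.

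The goal is to show that $\nu_{x_1,\dots,x_m}$ is absolutely continuous for a.e.\ pin tuple. A nonzero absolutely continuous measure supported on a compact subset of the configuration set forces that set to have positive Lebesgue measure. Measurability of the configuration set follows because it is a continuous image of a compact set.

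\textbf{Backwards induction.} Peel off the last vertex first. Let $\nu^{(r)}$ denote the analogous pushforward using only $v_1,\dots,v_r$, where the earlier points $(x_i)_{i<r}$ are treated as integration variables (or pins). Test against a nonnegative $f\in C_c(\mathbb{R}^{|\mathcal{E}|})$ and apply Fubini:
\[
\int f\,d\nu^{(r)}=\int\!\!\int f\bigl(\Psi_{r-1}(x_{<r}),\,t\bigr)\,\rho_{x_{\eta_r}}(t)\,dt\;d\Bigl(\prod_{i=m+1}^{r-1}\mu_{E_i}\Bigr)(x_{m+1},\dots,x_{r-1}).
\]
Here $\Psi_{r-1}$ is the configuration map for $v_1,\dots,v_{r-1}$, and $\rho_{x_{\eta_r}}$ is the density of
\[
\bigl(d^{\epsilon_r-star,\Phi^r}_{(x_i)_{i\in\eta_r}}\bigr)_*\mu_{E_r}.
\]
By $\alpha$-suitability, $\rho_{x_{\eta_r}}$ lies in $L^p\subset L^1_{\mathrm{loc}}$ for a.e.\ $x_{\eta_r}$ with respect to $\prod_{i\in\eta_r}\mu_{E_i}$.

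This identity exhibits $\nu^{(r)}$ as a superposition of the measures $\delta_{\Psi_{r-1}(x_{<r})}\otimes\rho_{x_{\eta_r}}(t)\,dt$. Inductively, the joint law of $\Psi_{r-1}(x_{<r})$ is absolutely continuous. One then shows that a null set $N\subset\mathbb{R}^{|\mathcal{E}|}$ has $\nu^{(r)}(N)=0$. For each fixed $x_{<r}$, the slice $N_{\Psi_{r-1}(x_{<r})}$ is null in $t$ for a.e.\ value of $\Psi_{r-1}$. The set where this fails is Lebesgue null in the first factor, so it is $\nu^{(r-1)}$-null by the inductive hypothesis. On the good set, the inner integral vanishes because $\rho_{x_{\eta_r}}\in L^1$.

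\textbf{Main obstacle: null sets of the $\mu$-product.} The suitability guarantee holds only off a $\prod_{i\in\eta_r}\mu_{E_i}$-null set of $x_{\eta_r}$. We need this null set to be negligible for the full product $\prod_{i=1}^{r-1}\mu_{E_i}$. That follows because its preimage under the coordinate projection onto the $\eta_r$ coordinates is null for the full product measure. This is why the hypothesis is stated with separated sets and Frostman product measures rather than mere existence of good pins, and why $L^p$ densities, not just positive measure of the star sets, are needed.

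Finally, the pins $x_1,\dots,x_m$ are never integrated away. We run the whole argument with respect to $\prod_{i\le m}\mu_{E_i}\times\prod_{i>m}\mu_{E_i}$ and apply Fubini in the pin variables at the end. This yields absolute continuity of $\nu_{x_1,\dots,x_m}$ for $\prod_{i\le m}\mu_{E_i}$-a.e.\ pin tuple, and that set of tuples has positive measure.
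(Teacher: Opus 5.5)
Your proof is correct. It uses the same induction as the paper: remove the last vertex of the construction order, apply the a.e.\ star hypothesis for that vertex, and use Fubini to make the exceptional set of star centers negligible. The difference is that you work with measures where the paper works with sets.

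\textbf{The paper's route.} It defines \emph{good} pin tuples through nested conditional full-measure sets $(E_r)^G_{x,y_{(r-1)}}\subset E_r$. It proves by induction on the number of unpinned vertices that almost every pin tuple is good. It then bounds $\mathcal{L}^{|\mathcal{E}|}$ of the configuration set from below by an iterated integral over the star sets $\Delta^{\epsilon_r\text{-star}}\bigl((E_r)^G_{x,y_{(r-1)}}\bigr)$. Each of these has positive measure by Lemma~\ref{Lemma: L2lemma}, and at each stage the paper chooses a point $y_r$ realizing the current parameters.

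\textbf{Your route.} You show the natural pushforward $\nu_{x_1,\dots,x_m}$ is absolutely continuous by slicing Lebesgue-null sets. This gives a stronger conclusion: absolute continuity of the configuration measure, not just positive measure of the set. It uses only absolute continuity of the star pushforwards. It also dispenses with the restriction lemma, the nested good sets, and the choice of realizing points. The paper's version, in return, is more explicit about which sub-configurations are counted, and it parallels the chain and cycle arguments of Section~\ref{sec: pinnednecklaces}.

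\textbf{One point to state precisely: where the pins are handled.}
\begin{itemize}
\item Your displayed identity fixes the pins and integrates only over $y_{m+1},\dots,y_{r-1}$. Your null-set paragraph, however, uses the product measure including the pins.
\item Absolute continuity of the pin-averaged measure $\int \nu_x \, d\bigl(\prod_{i\le m}\mu_{E_i}\bigr)(x)$ would not give absolute continuity of $\nu_x$ for a.e.\ $x$, since averages of singular measures can be absolutely continuous.
\item So the Fubini step in the pin variables belongs at the start, not the end. For each $r$, enclose the exceptional $\eta_r$-tuples in a Borel $\prod_{i\in\eta_r}\mu_{E_i}$-null set $Z_r$. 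Let $X_0$ be the set of pin tuples $x$ such that, for every $r$, the section of $Z_r$ at $(x_i)_{i\in\eta_{r,P}}$ is $\prod_{i\in\eta_{r,U}}\mu_{E_i}$-null.
\item $X_0$ is a finite intersection of full-measure sets. For each fixed $x\in X_0$, your induction on $r$ then goes through verbatim.
\item Equivalently, you can keep the pin coordinates in the target of the pushforward and disintegrate at the end.
\end{itemize}
This selection of pins is exactly the role of the sets $\mathcal{E}_{\eta_{l,P}}$ in the paper's proof.
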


\section{Notion of admissibility and some key examples}\label{sec: admissibility}

\subsection{Allowed moves: definition of $k$-admissible}
Following the notation above, let $G=(\mathcal V,\mathcal E)$ be a simple graph with pins $\mathcal{P}\subset\mathcal V$ and $|\mathcal{P}|=m$. List the pins as $v_1,\dots,v_m$. We say that the pinned graph $(G,\mathcal{P})$ is $k$-admissible, $k\in\mathbb N$, if $G$ can be constructed in the steps below.

\begin{enumerate}
\item Add the pinned vertices $v_1,\cdots, v_m$ (this step is skipped if there are no pinned vertices).
\item Choose a vertex $v_{m+1}\in \mathcal{V}\backslash\mathcal{P}$ 
 such that the number of edges in $\mathcal{E}$ connecting $v_{m+1}$ to the previously added vertices is at most $k$. Add $v_{m+1}$ and all such edges.
\item Repeat the above step to add another vertex $v_{m+2}\in {\mathcal{V}}\backslash (\mathcal{P}\cup\{v_{m+1}\})$ and all the edges in $\mathcal{E}$ connecting $v_{m+2}$ to the previously added vertices (at most $k$ such edges). Continue the process.
\item Continue until all vertices in $\mathcal{V}$ are added (and hence all edges in $\mathcal{E}$ are revealed).
\end{enumerate}

The next definition allows us to encode $k$-admissibility succinctly.

\begin{defn}[Construction order and back-degree]\label{def: contructionorder}
We say that an ordering $v_1,\dots,v_n$ of $\mathcal V$ with $v_1,\dots,v_m\in\mathcal{P}$ is a \emph{construction order} for $(G,\mathcal{P})$.  
For $i>m$, define the \emph{back-degree}
\[
d_{<i}(v_i):=\big|\{\,u\in\{v_1,\dots,v_{i-1}\}:\ (u,v_i)\in\mathcal E\,\}\big|.
\]
Operationally, when $v_i$ is added, we reveal all edges from $v_i$ to the already added set; edges whose endpoints are both not yet added are revealed when the later endpoint appears.
\end{defn}

\begin{defn}[$k$-admissible; admissibility number]
For $k\in\mathbb N_0$, a pinned graph $(G,\mathcal{P})$ is \emph{$k$-admissible} if there exists a construction order with $d_{<i}(v_i)\le k$ for all $i>m$. We may refer to such a construction order as a \textit{$k$-admissible construction order}. The \emph{admissibility number} is defined as
\[
\kappa(G,\mathcal{P}):=\min\{\,k\in \N_0:\ (G,\mathcal{P})\ \text{is $k$-admissible}\,\}.
\]
\end{defn}

Trivially,
\[
0\ \le\ \kappa(G,\mathcal{P})\ \le\ \Delta(G)\ \le\ \min\{\,|\mathcal V|-1,\ |\mathcal E|\,\},
\]
 where $\Delta(G)$ is the maximal degree for a vertex in $G$.

 A $k$-star $S_k$ with pins at all the leaves is $k$-admissible since the central vertex has the maximal degree $k$. In fact, $k$ is the admissibility number for such pinned graph. In the next subsection, we give various examples of pinned graphs and their corresponding admissibility.

\subsection{Admissibility for some key examples}

To minimize the admissibility $k$, we prioritize adding vertices that have the largest relative degree to those that have already been added. (In other words, after step 1, we should first add the $v_{m+1}$ that has the most edges connected to $v_1,\cdots, v_m$, and repeat this at each step). To understand why that is advantageous, consider the 
example where $\mathcal{P}=\{v_1,v_2\}\subset\mathcal{V}=\{v_1,v_2,v_3,v_4\}$, and the edge set is $\mathcal{E}=\{(v_1,v_3), (v_1,v_4), (v_2,v_3),(v_3,v_4)\}$.  
We start with the pinned vertices $\{v_1,v_2\}$ and no edges. If $v_3$ is added first, its back-degree is $2$, and then $v_4$ follows with back-degree $2$, giving $\kappa(G,\mathcal{P})\le 2$.  
If instead $v_4$ is added first (back-degree $1$), then $v_3$ comes with back-degree $3$, so we only get $\kappa(G,\mathcal{P})\le 3$.  
Thus prioritizing the vertex with larger current back-degree leads to a better bound.

\subsubsection{Complete graph in $(k+1)$ vertices}    
     
     For a complete graph in $(k+1)$ vertices (i.e, every pair of points is connected by one edge), we can pin at most one vertex (since we are requiring the pins not to be connected by an edge). Independent of the location of the pin, such a graph is $k$-admissible, and $k$ is its admissibility number (since the last added unpinned vertex in the admissibility construction will always be connected to the previous $k$-vertices). Applying Theorem \ref{thm: firststructuralthm}, one recovers the result for pinned $k$-simplices in \cite[Theorem 1.9]{IPPS22}, namely that the distance set 
     $\{(|x_i-x_j|)_{1\leq i<j\leq k+1}\colon x_2,x_3,\dots ,x_{k}\in E \}$
     has positive $k+1\choose 2$-dimensional Lebesgue measure for some $x_1\in E$ if $\dim(E)>\frac{d+k}{2}$. For $ k=2$, this reduces to pinned triangles.
      See Figure \ref{fig: pinnedtriangle} and Figure \ref{fig: 4simplex} for $k=2$ and $k=4$, respectively.

\begin{minipage}[t]{0.5\textwidth}

\centering
\begin{tikzpicture}[scale=1.5, 
    every node/.style={circle, draw, minimum size=6pt, inner sep=0pt},
    label distance=1mm]

% Coordinates of an equilateral triangle rotated by 30 degrees
\coordinate (A) at ({cos(30)}, {sin(30)});
\coordinate (B) at ({cos(150)}, {sin(150)});
\coordinate (C) at ({cos(270)}, {sin(270)});

% Draw edges
\draw (A) -- (B) -- (C) -- cycle;
% Draw vertices
\node[fill=magenta, label=above left:\(v_1\)] at (A) {};
\node[fill=black, label=above right:\(v_2\)] at (B) {};
\node[fill=black, label=below:\(v_3\)] at (C) {};

\end{tikzpicture}

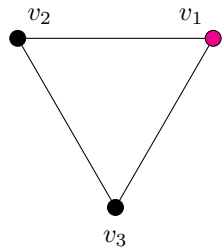
\captionof{figure}{Pinned triangle.}
\label{fig: pinnedtriangle}
  \label{fig:pinnedtriangle}
  
\end{minipage}%
\begin{minipage}[t]{0.5\textwidth}

\begin{center}
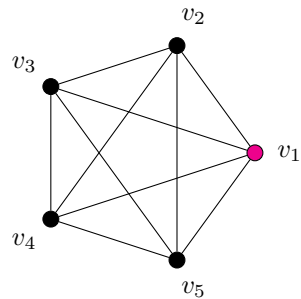

\begin{tikzpicture}[scale=1.5,
  every node/.style={circle, draw, minimum size=6pt, inner sep=0pt},
  label distance=1.5mm]

% Store coordinates of the 5 vertices
\foreach \i in {1,...,5} {
    \pgfmathsetmacro{\angle}{360/5*(\i-1)}
    \pgfmathsetmacro{\x}{cos(\angle)}
    \pgfmathsetmacro{\y}{sin(\angle)}
    
    \ifnum\i=1
        \node[fill=magenta, label={\angle:\(v_{\i}\)}] (V\i) at (\x,\y) {};
    \else
        \node[fill=black, label={\angle:\(v_{\i}\)}] (V\i) at (\x,\y) {};
    \fi
}

% Draw all edges (complete graph)
\foreach \i in {1,...,5} {
    \foreach \j in {\i,...,5} {
        \ifnum\i<\j
            \draw (V\i) -- (V\j);
        \fi
    }
}

\end{tikzpicture}
\end{center}

\captionof{figure}{Pinned complete graph in $5$ vertices.}
\label{fig: 4simplex}
\end{minipage}

    \medskip
    
     Note that the discussion above implies that for general graphs, even if we pin at just one vertex, arbitrary admissibility can be realized in the class of simple finite graphs. Moreover, for this example, the admissibility agrees with the maximal vertex degree $\Delta(G)$.

\subsubsection{Chains}

   Take $\text{Ch}_n$ to be a chain graph on $n$ vertices, with pins in $\mathcal{P}$, a collection of $m$ non-adjacent vertices of $\text{Ch}_n$. For $m=1$ (so also for $m=0$) it is easy to see that  $(\text{Ch}_n,\mathcal{P})$ is $1$-admissible, while for any $m\geq 2$, $(\text{Ch}_n,\mathcal{P})$ is $2$-admissible, which can be shown by induction on the number of unpinned vertices in the chain.

    \begin{center}
\begin{tikzpicture}[scale=1.2,
  vertex/.style={circle, draw, minimum size=6pt, inner sep=0pt}, % for vertices only
  label distance=1mm]\label{fig: multiplypinnedchain}

% Define vertex positions and draw them
\foreach \i in {1,...,6} {
    \pgfmathsetmacro{\x}{\i}
    \pgfmathsetmacro{\y}{mod(\i,2)}  % alternate up/down
    \pgfmathsetmacro{\yy}{\y * 1.0}  % vertical displacement

    \ifnum\i=1
        \node[vertex, fill=magenta, label=above:\(v_{\i}\)] (V\i) at (\x,\yy) {};
    \else\ifnum\i=3
        \node[vertex, fill=magenta, label=above:\(v_{\i}\)] (V\i) at (\x,\yy) {};
        \else\ifnum\i=5
        \node[vertex, fill=black, label=above:\(v_{\i}\)] (V\i) at (\x,\yy) {};
    \else\ifnum\i=6
        \node[vertex, fill=magenta, label=below:\(v_{\i}\)] (V\i) at (\x,\yy) {};
    \else
        \node[vertex, fill=black, label=below:\(v_{\i}\)] (V\i) at (\x,\yy) {};
    \fi\fi\fi\fi
}

% Draw edges with plain text labels e_1 to e_5
\foreach \i [evaluate=\i as \j using int(\i+1),
            evaluate=\i as \label using int(\i)] in {1,...,5} {
    \draw (V\i) -- (V\j)
        node[midway, above, font=\small] {\(e_{\label}\)};
}

\end{tikzpicture}
\captionof{figure}{Example of multiply pinned chain.}
\end{center}

To check the $2$-admissibility of the example in the figure above, where $\mathcal{P}=\{v_1,v_3,v_6\}$ (vertices in magenta), one can start with $v_1,v_3,v_6$, then add $v_2$ and the two edges $e_1,e_2$, then $v_4$ and edge $e_3$, and finally $v_5$ and its two edges $e_4,e_5$.

     \subsubsection{ Trees}  Let $\mathcal{T}$ be a tree with at least two vertices, with pins in $\mathcal{P}$, a collection of $m$ non-adjacent vertices in the tree. Then such multiple-pinned graph is $m$-admissible. By Proposition \ref{prop: adding pins}, it would be enough to check it for $m=1$, i.e, only one pin. Since any tree (with at least two vertices) pinned at a single location will always have an unpinned leaf, the proof for $m=1$ is a simple induction on the number of vertices of the tree. 

    Observe that $m$ might not be optimal for certain trees with $m$ pins. In fact, for any  $2\leq k\leq m$ there are trees with $m$ pinned vertices whose admissibility number is $k$. One can achieve that by adding $k$ pinned leaves to the unpinned endpoint of an appropriate chain with $m-k$ pins. For example for $m=5$ and $k=3$ one can consider the tree in Figure \ref{fig:3admissible5pinnedtree}.

%\begin{center}
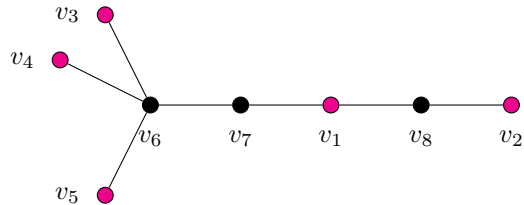
\begin{figure}[h]
\centering
\begin{tikzpicture}[scale=1.2,
  vertex/.style={circle, draw, minimum size=6pt, inner sep=0pt},
  label distance=1mm]

% ====== Main Chain ======
\node[vertex, fill=black, label=below:\(v_6\)] at (1,0) (V1) {};
\node[vertex, fill=black, label=below:\(v_7\)] at (2,0) (V2) {};
\node[vertex, fill=magenta, label=below:\(v_1\)] at (3,0) (V3) {};
\node[vertex, fill=black, label=below:\(v_8\)] at (4,0) (V4) {};
\node[vertex, fill=magenta, label=below:\(v_2\)] at (5,0) (V6) {}; % skipping v5

% Connect the chain
\draw (V1) -- (V2);
\draw (V2) -- (V3);
\draw (V3) -- (V4);
\draw (V4) -- (V6);

% ====== magenta Leaves connected to V1 ======
\node[vertex, fill=magenta, label=left:\(v_3\)] at (0.5, 1) (V7) {};
\node[vertex, fill=magenta, label=left:\(v_4\)] at (0, 0.5) (V8) {};
\node[vertex, fill=magenta, label=left:\(v_5\)] at (0.5, -1) (V9) {};

\draw (V1) -- (V7);
\draw (V1) -- (V8);
\draw (V1) -- (V9);

\end{tikzpicture}

\caption{Pinned tree with $5$ pins and admissibility number $3$.}
\label{fig:3admissible5pinnedtree}
\end{figure}
%\end{center}

%\captionof{figure}{Pinned tree with $5$ pins and admissibility number $3$.}
%\label{fig: 3admissible5pinnedtree}

\subsubsection{Multiply-pinned cycles}\label{subsec: cyclesare2adm} Figures \ref{fig: 8cycle} and \ref{fig: 7cycle} illustrate a couple of cycle graphs. If the cycle has an even number of vertices, we can pin at most half of them, while in the case of $|\mathcal{V}|=n$ is odd, then we can pin at most $(n-1)/2$ non-adjacent vertices. In either case, such pinned graphs are $2$-admissible since every vertex has degree $2$. Note that $2$ is their actual admissibility number since they are not $1$-admissible.

\begin{minipage}[t]{0.5\textwidth}
\begin{center}
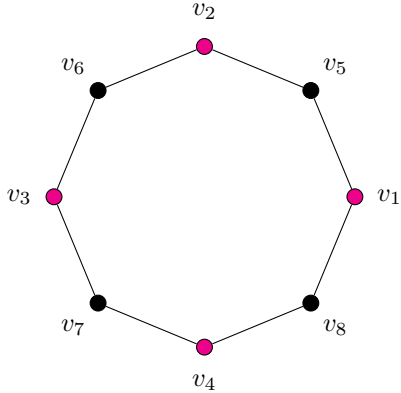

% First TikZ picture
\begin{tikzpicture}[scale=2,
    every node/.style={circle, draw, minimum size=6pt, inner sep=0pt},
    label distance=1.5mm]

% Loop to draw 8 vertices
\foreach \i in {1,...,8} {
    % Compute angle and coordinates
    \pgfmathsetmacro{\angle}{360/8*(\i-1)}
    \pgfmathsetmacro{\x}{cos(\angle)}
    \pgfmathsetmacro{\y}{sin(\angle)}
    \pgfmathsetmacro{\labelangle}{\angle}
    
    % Determine color and label index
    \ifodd\i
        \pgfmathtruncatemacro{\labelnum}{(\i+1)/2} % v1 to v4 for magenta
        \node[fill=magenta, label={\labelangle:\(v_{\labelnum}\)}] (V\i) at (\x,\y) {};
    \else
        \pgfmathtruncatemacro{\labelnum}{\i/2 + 4} % v5 to v8 for black
        \node[fill=black, label={\labelangle:\(v_{\labelnum}\)}] (V\i) at (\x,\y) {};
    \fi
}

% Draw edges
\foreach \i [evaluate=\i as \j using {int(mod(\i,8)+1)}] in {1,...,8} {
    \draw (V\i) -- (V\j);
}

\end{tikzpicture}
\captionof{figure}{$4$-pinned cycle with $8$ vertices.}
\label{fig: 8cycle}
\end{center}
\end{minipage}%
\begin{minipage}[t]{0.5\textwidth}
\begin{center}
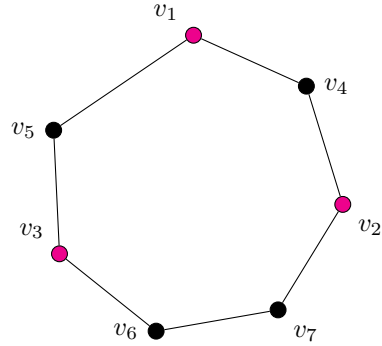

% Second TikZ picture

\begin{tikzpicture}[scale=2,
  vertex/.style={circle, draw, minimum size=6pt, inner sep=0pt}]

% Vertex placement with swapped labels and adjusted label positions
\node[vertex, fill=magenta,   label=above left:\(v_1\)] (V1) at (90:1) {};
\node[vertex, fill=black, label=right:\(v_4\)]     (V2) at (41.4:1) {};
\node[vertex, fill=magenta,   label=below right:\(v_2\)] (V3) at (-7.2:1) {};
\node[vertex, fill=black, label=below right:\(v_7\)] (V4) at (-55.8:1) {}; % label moved to below right
\node[vertex, fill=black, label=left:\(v_6\)]      (V5) at (-104.4:1) {};
\node[vertex, fill=magenta,   label=above left:\(v_3\)] (V6) at (-153:1) {};
\node[vertex, fill=black, label=left:\(v_5\)]      (V7) at (-201.6:1) {}; % label moved to left

% Draw edges in cycle order
\draw (V1) -- (V2) -- (V3) -- (V4) -- (V5) -- (V6) -- (V7) -- (V1);

\end{tikzpicture}
\captionof{figure}{$3$-pinned cycle with $7$ vertices.}
\label{fig: 7cycle}
\end{center}
\end{minipage}

\subsubsection{Trees of triangles}

In what follows, we consider a graph that is a tree of triangles. The vertices in magenta represent three different choices for the set of pins, and for which case we indicate the admissibility number of the resulting pinned graph.

\begin{minipage}{0.33\textwidth}
    \begin{center}
\begin{tikzpicture}[
  scale=1.5,
  vertex/.style={circle, fill=black, inner sep=1.5pt},
  redvertex/.style={circle, fill=magenta, inner sep=1.5pt},
  every label/.style={font=\small, inner sep=1pt}
]

  % Constants for equilateral triangles
  \def\L{1}             
  \def\H{0.8660254038}  

  % Coordinates (unchanged)
  \coordinate (v5) at (0, 0);        
  \coordinate (v1) at (-0.5*\L, \H); 
  \coordinate (v7) at (-\L, 0);      
  \coordinate (v8) at (0.5*\L, \H);  
  \coordinate (v2) at (\L, 0);       
  \coordinate (v6) at (-0.5*\L, -\H);
  \coordinate (v3) at (0.5*\L, -\H); 
  \coordinate (v9) at (-1*\L, -2*\H);
  \coordinate (v4) at (0, -2*\H);    

  % Draw triangles
  \draw[thick] (v5) -- (v1) -- (v7) -- cycle;
  \draw[thick] (v5) -- (v8) -- (v2) -- cycle;
  \draw[thick] (v5) -- (v6) -- (v3) -- cycle;
  \draw[thick] (v6) -- (v9) -- (v4) -- cycle;

  % Draw vertices with colors
  \node[vertex] at (v5) {};
  \node[redvertex] at (v1) {};
  \node[vertex] at (v7) {};
  \node[vertex] at (v8) {};
  \node[redvertex] at (v2) {};
  \node[vertex] at (v6) {};
  \node[redvertex] at (v3) {};
  \node[vertex] at (v9) {};
  \node[redvertex] at (v4) {};

  % Labels with new names
  \node[label=below right:\(v_5\)] at (v5) {};
  \node[label=above left:\(v_1\)] at (v1) {};
  \node[label=left:\(v_7\)] at (v7) {};
  \node[label=above right:\(v_8\)] at (v8) {};
  \node[label=right:\(v_2\)] at (v2) {};
  \node[label=left:\(v_6\)] at (v6) {};
  \node[label=below right:\(v_3\)] at (v3) {};
  \node[label=below left:\(v_9\)] at (v9) {};
  \node[label=below right:\(v_4\)] at (v4) {};

\end{tikzpicture}
\captionof{figure}{Admissibility number is $3$}
\end{center}

\end{minipage}\begin{minipage}{0.33\textwidth}
\begin{center}
\begin{tikzpicture}[
  scale=1.5,
  vertex/.style={circle, fill=black, inner sep=1.5pt},
  redvertex/.style={circle, fill=magenta, inner sep=1.5pt},
  every label/.style={font=\small, inner sep=1pt}
]

  % Constants for equilateral triangles
  \def\L{1}             
  \def\H{0.8660254038}  

  % Coordinates 
  \coordinate (v4) at (0, 0);        
  \coordinate (v6) at (-0.5*\L, \H); 
  \coordinate (v7) at (-\L, 0);      
  \coordinate (v8) at (0.5*\L, \H);  
  \coordinate (v1) at (\L, 0);       
  \coordinate (v5) at (-0.5*\L, -\H);
  \coordinate (v2) at (0.5*\L, -\H); 
  \coordinate (v9) at (-1*\L, -2*\H);
  \coordinate (v3) at (0, -2*\H);    

  % Draw triangles
  \draw[thick] (v4) -- (v6) -- (v7) -- cycle;
  \draw[thick] (v4) -- (v8) -- (v1) -- cycle;
  \draw[thick] (v4) -- (v5) -- (v2) -- cycle;
  \draw[thick] (v5) -- (v9) -- (v3) -- cycle;

  % Draw vertices with colors
  \node[vertex] at (v5) {};
  \node[redvertex] at (v1) {};
  \node[vertex] at (v7) {};
  \node[vertex] at (v8) {};
  \node[redvertex] at (v2) {};
  \node[vertex] at (v6) {};
  \node[redvertex] at (v3) {};
  \node[vertex] at (v9) {};
  \node[vertex] at (v4) {};

  % Labels with new names
  \node[label=left:\(v_5\)] at (v5) {};
  \node[label=right:\(v_1\)] at (v1) {};
  \node[label=left:\(v_7\)] at (v7) {};
  \node[label=above right:\(v_8\)] at (v8) {};
  \node[label=below right:\(v_2\)] at (v2) {};
  \node[label=above left:\(v_6\)] at (v6) {};
  \node[label=below right:\(v_3\)] at (v3) {};
  \node[label=below left:\(v_9\)] at (v9) {};
  \node[label=below right:\(v_4\)] at (v4) {};

\end{tikzpicture}
\end{center}
\captionof{figure}{\small{Admissibility number is $3$}}
\end{minipage}\begin{minipage}{0.33\textwidth}
    \begin{center}
\begin{tikzpicture}[
  scale=1.5,
  vertex/.style={circle, fill=black, inner sep=1.5pt},
  redvertex/.style={circle, fill=magenta, inner sep=1.5pt},
  every label/.style={font=\small, inner sep=1pt}
]

  % Constants for equilateral triangles
  \def\L{1}             
  \def\H{0.8660254038}  

  % Coordinates (unchanged)
  \coordinate (v4) at (0, 0);        
  \coordinate (v1) at (-0.5*\L, \H); 
  \coordinate (v7) at (-\L, 0);      
  \coordinate (v8) at (0.5*\L, \H);  
  \coordinate (v2) at (\L, 0);       
  \coordinate (v5) at (-0.5*\L, -\H);
  \coordinate (v6) at (0.5*\L, -\H); 
  \coordinate (v9) at (-1*\L, -2*\H);
  \coordinate (v3) at (0, -2*\H);    

  % Draw triangles
  \draw[thick] (v4) -- (v1) -- (v7) -- cycle;
  \draw[thick] (v4) -- (v8) -- (v2) -- cycle;
  \draw[thick] (v4) -- (v5) -- (v6) -- cycle;
  \draw[thick] (v5) -- (v9) -- (v3) -- cycle;

  % Draw vertices with colors
  \node[vertex] at (v5) {};
  \node[redvertex] at (v1) {};
  \node[vertex] at (v7) {};
  \node[vertex] at (v8) {};
  \node[redvertex] at (v2) {};
  \node[vertex] at (v6) {};
  \node[redvertex] at (v3) {};
  \node[vertex] at (v9) {};
  \node[vertex] at (v4) {};

  % Labels with new names
  \node[label=left:\(v_5\)] at (v5) {};
  \node[label=above left:\(v_1\)] at (v1) {};
  \node[label=left:\(v_7\)] at (v7) {};
  \node[label=above right:\(v_8\)] at (v8) {};
  \node[label=right:\(v_2\)] at (v2) {};
  \node[label=below right:\(v_6\)] at (v6) {};
  \node[label=below right:\(v_3\)] at (v3) {};
  \node[label=below left:\(v_9\)] at (v9) {};
  \node[label=below right:\(v_4\)] at (v4) {};

\end{tikzpicture}

\end{center}
\captionof{figure}{\small{Admissibility number is $2$}}
\end{minipage}

In the figures above, we have intentionally labeled the vertices in a way that suggests a possible order for adding vertices and edges to satisfy the claimed admissibility in each case. For example, we can check that the pinned graph on the  leftmost position is $3$-admissible by following the steps:
\begin{itemize}
    \item Add pinned vertices $v_1,v_2,v_3,v_4$;
    \item Add vertex $v_5$ and the $3$ edges connecting it to $v_1,v_2,v_3$ respectively;
    \item Add $v_6$ and the $3$ edges connecting it to $v_3,v_4$ and $v_5$;
    \item Add $v_7$ and the $2$ edges connecting it to $v_1,v_5$;
    \item Add $v_8$ and $v_9$ in a similar way as $v_7$ ($2$ edges for each of them).
\end{itemize}

\subsubsection{Double banana}

The following graph will be referred to as the double banana graph. 

\begin{center}
\begin{tikzpicture}[scale=1.5]

% --- Coordinates ---
\coordinate (A) at (0,0.3);        % v1
\coordinate (B) at (1,0.3);        % v2
\coordinate (C) at (0.6,0.6);    % v3
\coordinate (F) at (2,0.3);        % v6
\coordinate (G) at (3,0.3);        % v7
\coordinate (H) at (2.4,0.6);    % v8
\coordinate (D) at (1.5,1.6);    % v4
\coordinate (E) at (1.5,-0.9);   % v5

% --- Edges for Banana 1 ---
\draw[thick] (A) -- (B) -- (C) -- cycle;
\draw[thick] (A) -- (D); \draw[thick] (B) -- (D); \draw[thick] (C) -- (D);
\draw[thick] (A) -- (E); \draw[thick] (B) -- (E); \draw[thick] (C) -- (E);

% --- Edges for Banana 2 ---
\draw[thick] (F) -- (G) -- (H) -- cycle;
\draw[thick] (F) -- (D); \draw[thick] (G) -- (D); \draw[thick] (H) -- (D);
\draw[thick] (F) -- (E); \draw[thick] (G) -- (E); \draw[thick] (H) -- (E);

% --- Vertices (dots) ---
\fill[magenta]  (A) circle (1.5pt);   % v1 in blue
\fill[black] (B) circle (1.5pt);   % v2
\fill[black] (C) circle (1.5pt);   % v3
\fill[black] (D) circle (1.5pt);   % v4
\fill[black] (E) circle (1.5pt);   % v5
\fill[black] (F) circle (1.5pt);   % v6
\fill[black] (G) circle (1.5pt);   % v7
\fill[black] (H) circle (1.5pt);   % v8

% --- Labels (manually placed) ---
\node at (-0.2, 0.3) {$v_1$};
\node at (1.2, 0.3) {$v_2$};
\node at (0.8, 0.65) {$v_3$};
\node at (1.5, 1.75) {$v_4$};
\node at (1.5, -1.05) {$v_5$};
\node at (1.8, 0.3) {$v_6$};
\node at (3.2, 0.3) {$v_7$};
\node at (2.2, 0.65) {$v_8$};

\end{tikzpicture}

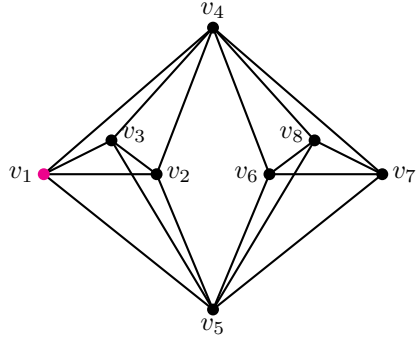
\captionof{figure}{The double banana graph.}
\end{center}

If we pin the vertex $v_1$, then adding vertices according to our labeling, we can see that the pinned graph is $4$-admissible. In fact, for any choice of the pinned vertex, the double banana graph is $4$-admissible, as the interested reader can easily check. By Theorem~\ref{thm: firststructuralthm}, we therefore obtain that this graph induces pinned distance sets  of positive measure in $\R^{18}$ provided that the compact set $E\subset \R^d$ where we sample the vertices has Hausdorff dimension strictly larger than $(d+4)/2$.

Next, suppose we delete the edge connecting $v_2$ and $v_4$, as in the figure below. Sampling the vertices of the graph in a set $E\subset \R^d$ we get a distance set in $\R^{17}$. For this example, we observe that the location of the pin may affect the admissibility number, so the choice of the pin matters in general. A pin at $v_7$ produces a $3$-admissible pinned graph (for instance, by adding vertices in the order $v_6,v_8,v_4,v_5,v_1,v_3,v_2$ in the admissibility definition, or by using Proposition \ref{prop:algorithmkadm}, since successively deleting unpinned vertices of degree at most $3$ and their corresponding edges eventually deletes all unpinned vertices), but the graph on the left with a pin at $v_1$ is not $3$-admissible.

\begin{minipage}{0.5\textwidth}
\centering
\begin{tikzpicture}[scale=1.5]

% --- Coordinates ---
\coordinate (A) at (0,0.3);        % v1
\coordinate (B) at (1,0.3);        % v2
\coordinate (C) at (0.6,0.6);    % v3
\coordinate (F) at (2,0.3);        % v6
\coordinate (G) at (3,0.3);        % v7
\coordinate (H) at (2.4,0.6);    % v8
\coordinate (D) at (1.5,1.6);    % v4
\coordinate (E) at (1.5,-0.9);   % v5

% --- Edges for Banana 1 ---
\draw[thick] (A) -- (B) -- (C) -- cycle;
\draw[thick] (A) -- (D); 
%\draw[thick] (B) -- (D); 
\draw[thick] (C) -- (D);
\draw[thick] (A) -- (E); \draw[thick] (B) -- (E); \draw[thick] (C) -- (E);

% --- Edges for Banana 2 ---
\draw[thick] (F) -- (G) -- (H) -- cycle;
\draw[thick] (F) -- (D); \draw[thick] (G) -- (D); \draw[thick] (H) -- (D);
\draw[thick] (F) -- (E); \draw[thick] (G) -- (E); \draw[thick] (H) -- (E);

% --- Vertices (dots) ---
\fill[magenta]  (A) circle (1.5pt);   % v1 in blue
\fill[black] (B) circle (1.5pt);   % v2
\fill[black] (C) circle (1.5pt);   % v3
\fill[black] (D) circle (1.5pt);   % v4
\fill[black] (E) circle (1.5pt);   % v5
\fill[black] (F) circle (1.5pt);   % v6
\fill[black] (G) circle (1.5pt);   % v7
\fill[black] (H) circle (1.5pt);   % v8

% --- Labels (manually placed) ---
\node at (-0.2, 0.3) {$v_1$};
\node at (1.2, 0.3) {$v_2$};
\node at (0.8, 0.65) {$v_3$};
\node at (1.5, 1.75) {$v_4$};
\node at (1.5, -1.05) {$v_5$};
\node at (1.8, 0.3) {$v_6$};
\node at (3.2, 0.3) {$v_7$};
\node at (2.2, 0.65) {$v_8$};

\end{tikzpicture}
\captionof{figure}{A $4$-admissible pinned graph that is not $3$-admissible.}

\end{minipage}\begin{minipage}{0.5\textwidth}
\centering
\begin{tikzpicture}[scale=1.5]

% --- Coordinates ---
\coordinate (A) at (0,0.3);        % v1
\coordinate (B) at (1,0.3);        % v2
\coordinate (C) at (0.6,0.6);    % v3
\coordinate (F) at (2,0.3);        % v6
\coordinate (G) at (3,0.3);        % v7
\coordinate (H) at (2.4,0.6);    % v8
\coordinate (D) at (1.5,1.6);    % v4
\coordinate (E) at (1.5,-0.9);   % v5

% --- Edges for Banana 1 ---
\draw[thick] (A) -- (B) -- (C) -- cycle;
\draw[thick] (A) -- (D); 
%\draw[thick] (B) -- (D); 
\draw[thick] (C) -- (D);
\draw[thick] (A) -- (E); \draw[thick] (B) -- (E); \draw[thick] (C) -- (E);

% --- Edges for Banana 2 ---
\draw[thick] (F) -- (G) -- (H) -- cycle;
\draw[thick] (F) -- (D); \draw[thick] (G) -- (D); \draw[thick] (H) -- (D);
\draw[thick] (F) -- (E); \draw[thick] (G) -- (E); \draw[thick] (H) -- (E);

% --- Vertices (dots) ---
\fill[black]  (A) circle (1.5pt);   % v1 in blue
\fill[black] (B) circle (1.5pt);   % v2
\fill[black] (C) circle (1.5pt);   % v3
\fill[black] (D) circle (1.5pt);   % v4
\fill[black] (E) circle (1.5pt);   % v5
\fill[black] (F) circle (1.5pt);   % v6
\fill[magenta] (G) circle (1.5pt);   % v7
\fill[black] (H) circle (1.5pt);   % v8

% --- Labels (manually placed) ---
\node at (-0.2, 0.3) {$v_1$};
\node at (1.2, 0.3) {$v_2$};
\node at (0.8, 0.65) {$v_3$};
\node at (1.5, 1.75) {$v_4$};
\node at (1.5, -1.05) {$v_5$};
\node at (1.8, 0.3) {$v_6$};
\node at (3.2, 0.3) {$v_7$};
\node at (2.2, 0.65) {$v_8$};

\end{tikzpicture}
\captionof{figure}{A $3$-admissible pinned graph.}
\end{minipage}

For the corresponding unpinned graph, it was shown in \cite{CIMP2021} that one has positive measure in $\R^{17}$ for the distance set induced by the double banana with the deleted edge, inside compact sets $E\subset \R^3$ with $\dim(E) > 3 - 1/9$. We note that our methods only provide the threshold
\[
\dim(E) > \frac{d+3}{2} = 3,
\]
so they do not yield any nontrivial result in $\R^3$.

\section{An algorithm - graph dismantling lemma}\label{sec: graphdismantling}

We want to figure out the minimal admissibility $k$ of a given pinned graph. When a graph has no pins, our algorithm recovers $k$ as the $k$-degeneracy of the graph. 

\begin{defn}[Pinned subgraph]
    We will say that a pinned graph $(H,\mathcal{P}_{H})$ is a \textbf{pinned subgraph} of the pinned graph $(G, \mathcal{P}_G)$ if $H$ is a pinned graph which is an induced subgraph of $G$, and $\mathcal{P}_H\subset \mathcal{P}_G$, i.e., the sets of pinned vertices in $H$ is a subset of the set of pinned vertices in $G$. 
\end{defn}

 We start with a simple lemma.

\begin{lem}\label{lem: pinnedsubgraphs}
    Let $(H,\mathcal{P}_H)$ be a pinned subgraph of $(G,\mathcal{P}_G)$. If $(G,\mathcal{P}_G)$ is $k$-admissible, then so is $(H,\mathcal{P}_H)$.
\end{lem}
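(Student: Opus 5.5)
Take a $k$-admissible construction order $\mathcal{O}=(v_1,\dots,v_m,v_{m+1},\dots,v_n)$ for $(G,\mathcal{P}_G)$, with $v_1,\dots,v_m$ the pins of $G$ and $d_{<i}(v_i)\le k$ for all $i>m$. I will build a construction order for $(H,\mathcal{P}_H)$ from it: first list the pins of $H$ (the elements of $\mathcal{P}_H$) in any order, then the remaining vertices $\mathcal{V}(H)\setminus\mathcal{P}_H$ in the relative order they have in $\mathcal{O}$. This is a construction order for $(H,\mathcal{P}_H)$ by Definition \ref{def: contructionorder}. It remains to bound back-degrees.

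Fix an unpinned vertex $w\in\mathcal{V}(H)\setminus\mathcal{P}_H$ and let $B_H(w)$ be its earlier $H$-neighbours in the new order. Any $u\in B_H(w)$ is of one of two kinds. Either $u\in\mathcal{P}_H\subset\mathcal{P}_G$, so $u$ is a pin of $G$ and precedes every unpinned vertex of $G$ in $\mathcal{O}$. Or $u\in\mathcal{V}(H)\setminus\mathcal{P}_H$ precedes $w$ in the new order, hence precedes $w$ in $\mathcal{O}$ by construction. Since $H$ is an induced subgraph, $(u,w)\in\mathcal{E}(H)$ implies $(u,w)\in\mathcal{E}(G)$. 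So $B_H(w)$ is contained in the set of $G$-neighbours of $w$ preceding $w$ in $\mathcal{O}$.

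The one subtle point is the case $w\in\mathcal{P}_G\setminus\mathcal{P}_H$: $w$ is unpinned in $H$ but was a pin in $G$, so $\mathcal{O}$ gives no back-degree bound for it. Here I use the standing hypothesis that no two pins of $G$ are adjacent. An earlier $H$-neighbour $u$ of $w$ cannot lie in $\mathcal{P}_H\subset\mathcal{P}_G$, since then $u,w$ would be adjacent pins of $G$. Nor can $u$ lie in $\mathcal{V}(H)\setminus\mathcal{P}_H$, since then $u$ would precede the pin $w$ in $\mathcal{O}$ and so would itself be a pin of $G$, again adjacent to $w$. Hence $|B_H(w)|=0\le k$.

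In the remaining case $w\notin\mathcal{P}_G$, we have $w=v_i$ with $i>m$, and $|B_H(w)|\le d_{<i}(v_i)\le k$. Thus every unpinned vertex of $H$ has back-degree at most $k$, so $(H,\mathcal{P}_H)$ is $k$-admissible. Note that $H$ inherits the condition that no two pins are adjacent directly from $G$, because $\mathcal{P}_H\subset\mathcal{P}_G$. The only step needing care is the pin-demotion case above, and it is settled by the non-adjacency of pins.
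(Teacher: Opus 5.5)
Your proof is correct and follows the paper's approach: restrict a $k$-admissible construction order of $(G,\mathcal{P}_G)$ to the vertices of $H$, with the pins of $H$ listed first, and observe that back-degrees cannot increase. The paper's one-line argument leaves implicit the vertices that are pins of $G$ but not of $H$; your explicit treatment of them, which uses the standing assumption that pins of $G$ are non-adjacent to show their back-degree is $0$, fills in exactly that step.
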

\begin{proof}
Let $\mathcal{P}_G=\{v_1,v_2,\dots,v_m\}$ be the set of pins in $G$.
    There is a sequence of unpinned vertices $v_{m+1},\ldots,v_l$ in $\mathcal{V}_G$ such that $G$ can be constructed by starting with its pinned vertices and using legal $k$-admissible steps by adding the unpinned vertices in order. Remove any vertices from the list $v_1,v_2,\dots, v_l$ that are not in $\mathcal{V}_H$ and now construct $H$ using the remaining vertices in order; as the number of edges added at each step cannot go up, this proves $(H,\mathcal{P}_H)$ is also $k$-admissible.
\end{proof}

Now, we can describe a graph dismantling algorithm for checking if a (not necessarily singly) pinned graph is $k$-admissible. The algorithm is not unique, but we will show that the result of the algorithm does not depend on the choices made.\\

\textbf{$k$-ALGORITHM}: While there is at least one unpinned vertex of degree at most $k$, choose one such vertex and delete it (and all edges emanating from it). The procedure terminates once all unpinned vertices have been deleted or once all remaining unpinned vertices have degree strictly greater than $k$.

\begin{prop}\label{prop:algorithmkadm}
A pinned graph $G$ is $k$-admissible if and only if the $k$-algorithm deletes all unpinned vertices. In particular, the outcome of the $k$-algorithm is independent of the choices made when applying the $k$-algorithm.
\end{prop}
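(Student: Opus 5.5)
The proof rests on one observation: a deletion sequence of the $k$-algorithm, read in reverse, is a $k$-admissible construction order, and conversely. Lemma \ref{lem: pinnedsubgraphs} then gives independence of choices.

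\textbf{Algorithm succeeds $\Rightarrow$ admissible.} Suppose the $k$-algorithm deletes all unpinned vertices, in the order $u_1,\dots,u_{n-m}$. Consider the construction order that lists the pins first and then $u_{n-m},u_{n-m-1},\dots,u_1$. When $u_j$ was deleted, the remaining graph was the induced subgraph on $\mathcal{P}\cup\{u_j,u_{j+1},\dots,u_{n-m}\}$. In the reversed order, the vertices preceding $u_j$ are exactly $\mathcal{P}\cup\{u_{j+1},\dots,u_{n-m}\}$. So the back-degree of $u_j$ equals its degree at the moment of deletion, which is at most $k$. Hence this is a $k$-admissible construction order.

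\textbf{Admissible $\Rightarrow$ algorithm succeeds, for every run.} Suppose $(G,\mathcal{P})$ is $k$-admissible, and run the algorithm with arbitrary choices. Suppose it stops with a nonempty set $U$ of remaining unpinned vertices. Then every vertex of $U$ has degree greater than $k$ in the current graph $H$, which is the induced subgraph on $\mathcal{P}\cup U$. Give $H$ the pin set $\mathcal{P}$. This is a pinned subgraph of $(G,\mathcal{P})$, so by Lemma \ref{lem: pinnedsubgraphs} it is $k$-admissible. Take a $k$-admissible construction order for $H$ and let $v$ be its last vertex; $v\in U$ because $U\neq\emptyset$ and the pins come first. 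Since $v$ is last, its back-degree equals its full degree in $H$, which exceeds $k$. This is a contradiction, so $U=\emptyset$.

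\textbf{Independence of choices.} The second part of the argument shows that if $(G,\mathcal{P})$ is $k$-admissible, every run deletes all unpinned vertices. The first part shows that if some run deletes everything, the graph is $k$-admissible, and hence every run does. So whether a run succeeds depends only on $(G,\mathcal{P})$, and the outcome is independent of choices.

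The only delicate point is bookkeeping. Degrees in the algorithm must be computed in the current graph, including edges to pins, and the last vertex of a construction order must be unpinned; the latter is guaranteed because pins are listed first and $U$ is nonempty. The argument needs no further estimates.
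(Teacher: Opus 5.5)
Your proposal is correct and follows essentially the same route as the paper: reversing a successful deletion sequence gives a $k$-admissible construction order, and Lemma \ref{lem: pinnedsubgraphs} keeps the current pinned graph $k$-admissible, so its last unpinned vertex always has degree at most $k$. The only difference is cosmetic: you apply the lemma once, to the graph where the algorithm stops, and derive a contradiction, whereas the paper applies it after each deletion and inducts on the number of unpinned vertices.
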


This proposition is very useful for determining the admissibility number of a pinned graph. Indeed, since the admissibility number is always greater than or equal to the minimum degree, the $k$-algorithm can be run repeatedly, starting from $k$ equaling the minimum degree, and incrementing until a value is reached that leads to the algorithm deleting all unpinned vertices. The minimal such value is then the admissibility number of the pinned graph.
\begin{proof}
    If the $k$-algorithm deletes all unpinned vertices, then we can construct the graph by adding in the deleted vertices in the opposite order; this implies that $G$ is $k$-admissible. Conversely, if $G$ is $k$-admissible, then we can choose to delete the vertices in the reverse order in which they were added by the allowed moves in the definition of $k$-admissibility. Thus, it suffices to show that the outcome of the $k$-algorithm does not depend on the choices made to justify the fact that we made choices in the previous argument. If $G$ is not $k$-admissible, then the $k$-algorithm can never delete all unpinned vertices; otherwise, the graph can be constructed legally with $k$-admissible moves by adding the vertices in the opposite order in which they were deleted. If $G$ is $k$-admissible, we first claim there is at least one unpinned vertex of degree at most $k$. Indeed, there exists an ordering of unpinned vertices $w_1,\ldots w_l$, and then the vertex $w_l$ must have degree at most $k$. Let $G'$ denote the graph obtained by deleting an unpinned vertex of degree at most $k$ from $G$. Then $G'$ is $k$-admissible by the earlier lemma. The claim then follows by induction (more precisely, we could fix the number of pins to be $m$ and induct on the number of unpinned vertices $j$; then we use the induction hypothesis to deal with $G'$).

\end{proof}

\begin{prop}
    For any unpinned graph, $k$-admissibility is equivalent to $k$-degeneracy.
\end{prop}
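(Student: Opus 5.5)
Here $\mathcal P=\emptyset$, so a construction order is just an ordering $v_1,\dots,v_n$ of all of $\mathcal V$, and $k$-admissibility means some ordering has every vertex adjacent to at most $k$ earlier vertices. Since $k$-degeneracy is defined via the \emph{least} such number, I will show that for every $k\in\mathbb N_0$ the following are equivalent: (a) $(G,\emptyset)$ is $k$-admissible; (b) every induced subgraph of $G$ has a vertex of degree at most $k$. Then $\kappa(G,\emptyset)$ is exactly the least $k$ satisfying (b), which is the degeneracy. The natural tool is Proposition \ref{prop:algorithmkadm}: with no pins, the $k$-algorithm simply deletes vertices of degree at most $k$ repeatedly, which is the Matula--Beck peeling.

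For (a)$\Rightarrow$(b), take a nonempty induced subgraph $H$ of $G$. By Lemma \ref{lem: pinnedsubgraphs}, $(H,\emptyset)$ is again $k$-admissible, because it is a pinned subgraph with the empty pin set. Fix a $k$-admissible construction order for $H$ and let $w$ be its last vertex. Every neighbor of $w$ in $H$ comes earlier in that order, so $\deg_H(w)=d_{<}(w)\le k$. This gives the required vertex of degree at most $k$.

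For (b)$\Rightarrow$(a), run the $k$-algorithm on $G$. At each stage the remaining graph is an induced subgraph of $G$ (we only ever delete vertices together with their edges). If it is nonempty, (b) supplies a vertex of degree at most $k$ in it, and all vertices are unpinned, so the algorithm can continue. Hence it terminates only once every vertex is deleted, and Proposition \ref{prop:algorithmkadm} gives $k$-admissibility. Alternatively, one can argue directly: reversing the deletion order gives a construction order in which each vertex's back-degree equals its degree at the moment it was deleted, which is at most $k$.

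There is no real obstacle. The only points needing care are that ``induced subgraph'' matches the notion of pinned subgraph used in Lemma \ref{lem: pinnedsubgraphs}, and the convention for the empty subgraph and for $k=0$. For a nontrivial graph both sides are at least $1$, and for a graph with no edges both sides equal $0$.
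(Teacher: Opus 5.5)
Your proof is correct and is essentially the paper's argument: the paper simply cites the Matula--Beck smallest-last peeling, which with $\mathcal{P}=\emptyset$ is exactly the $k$-algorithm of Proposition \ref{prop:algorithmkadm}, and you have written out both directions explicitly using Lemma \ref{lem: pinnedsubgraphs} and that proposition. Proving the equivalence of the monotone conditions (a) and (b) for every $k$, so that $\kappa(G,\emptyset)$ equals the degeneracy, is the right way to read the statement, since $k$-degeneracy is defined via the least such $k$.
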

\begin{proof} This follows immediately from the algorithm described in \cite{Matula}.
\end{proof}

\begin{prop}\label{prop: adding pins}
    Let $(G,\mathcal{P})$ be a pinned graph with $|\mathcal{P}|=m_0+m$, and let $G_0=(G,\mathcal{P}')$ denote the same graph with pins in $\mathcal{P}'\subset \mathcal{P} $ and $|\mathcal{P}'|=m_0$, where $m_0,m\in\mathbb{Z}_{\geq 0}$. If $G_0$ is $k$-admissible, then $(G,\mathcal{P})$ is $(k+m)$-admissible.
\end{prop}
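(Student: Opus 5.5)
The plan is to reuse the $k$-admissible construction order for $G_0$ directly. Since $G_0$ is $k$-admissible, fix a construction order
\[
v_1,\dots,v_{m_0},\,w_1,\dots,w_n
\]
for $(G,\mathcal{P}')$, where $v_1,\dots,v_{m_0}$ are the pins in $\mathcal{P}'$ and $w_1,\dots,w_n$ are the remaining vertices. Its back-degrees satisfy $d_{<}(w_j)\le k$ for every $j$. Let $u_1,\dots,u_m$ be the pins in $\mathcal{P}\setminus\mathcal{P}'$; these occur among the $w_j$.

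For $(G,\mathcal{P})$ we use the new order
\[
v_1,\dots,v_{m_0},\,u_1,\dots,u_m,\,w_{j_1},\dots,w_{j_{n-m}},
\]
where $w_{j_1},\dots,w_{j_{n-m}}$ are the vertices of $\mathcal{V}\setminus\mathcal{P}$, kept in the same relative order as in the original order. This is a legitimate construction order because all pins of $\mathcal{P}$ come first.

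Now fix an unpinned vertex $w=w_{j_r}$ and bound its back-degree in the new order. Every earlier neighbor of $w$ lies in one of two sets. The first set is the earlier neighbors among $v_1,\dots,v_{m_0}$ and $w_{j_1},\dots,w_{j_{r-1}}$. Each of these precedes $w$ in the original order as well, so this set is contained in the original earlier-neighbor set of $w$, which has size at most $k$. The second set is the neighbors among the new pins $u_1,\dots,u_m$, and there are at most $m$ of these.

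Hence the back-degree of $w$ in the new order is at most $k+m$. This gives $(k+m)$-admissibility. The same counting could alternatively be phrased via the $k$-algorithm of Proposition \ref{prop:algorithmkadm}.

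The only point needing care is that moving the new pins to the front can only enlarge each unpinned vertex's earlier-neighbor set by vertices of $\mathcal{P}\setminus\mathcal{P}'$. Vertices that were before $w$ in the original order remain before it, and no other vertices are moved ahead of $w$. Once this is noted, the bound $k+m$ is immediate, so I expect no real obstacle.
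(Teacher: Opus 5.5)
Your proposal is correct and follows essentially the same approach as the paper. Both proofs take the $k$-admissible order for $G_0$, place the extra pins of $\mathcal{P}\setminus\mathcal{P}'$ at the front, and keep the remaining unpinned vertices in their original relative order. Each back-degree is then at most $k$ from the original earlier neighbours plus at most $m$ from the new pins.
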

\begin{proof}
    Since $G_0$ is $k$-admissible, the definition implies there is an ordering on the vertices of $G $ $v_1,v_2,\ldots,v_{m_0}, v_{m_0+1}\dots v_l$ such that $\mathcal{P}'=\{v_1,\dots, v_{m_0}\}$ are the pins, and each vertex $v_i$ with $i>m_0$ has at most $k$ edges with all preceding vertices. Let $v_{i_1},\ldots,v_{i_m}$, $m_0< i_1<\dots <i_{m}\leq l$, denote the $m$ extra pins in $\mathcal{P}\backslash \mathcal{P}'$. Now, consider the following ordering for the unpinned vertices of $G$: $v_{m_0+1},v_{m_0+2},\ldots,\hat{v}_{i_1},\ldots,\hat{v}_{i_m},\ldots,v_l$ where the hats denote omitting that vertex. When we construct $G$ starting from the pins $v_1,\dots,v_{m_0},v_{i_1},\dots, v_{i_m}$ and adding the unpinned vertices using that ordering, each vertex $v_j$ has at most $k$ edges with the lower-indexed vertices and at most $m$ edges with $v_{i_1},\ldots,v_{i_m}$, giving the claim.
\end{proof}

We note that the proposition above is saturated by considering an $k$-star graph. If only one leaf is pinned, then the graph is $1$-admissible, and after adding $k-1$ pins in all the remaining leaves, one gets the expected $k$-admissibility for the pinned $k$-star graph.

\section{Special case: Multiple pinned chains and cycles}\label{sec: pinnednecklaces}

In this section, we present the details of how to prove the positive measure result for a pinned chain and a pinned cycle graph (with one or more pins). The threshold $(d+2)/2$ holds for such pinned configurations as a consequence of our structural Theorem \ref{thm: firststructuralthm}, but these specific graph structures are simple enough that one does not need the full strength of $L^2$ densities for $2$-stars as stated in Theorem \ref{thm:k starinIPPS}. It is enough to have positive measure for $2$-stars in the sense defined below, which is implied by the stronger $L^2$ statement.

\begin{defn}\label{def: fullset}
    Let $\nu$ be a finite measure compactly supported in $E\subset \R^n$. We will say that a Borel subset $F\subseteq E$ is $\nu$-full if $\nu(F)=\nu(E).$
\end{defn}

\begin{defn}\label{def:weak-form-k-star}
We say that $\alpha \in (0,d)$ is \emph{suitable for positive measure of $k$-stars} if the following holds: For every compact set $E \subset \mathbb{R}^d$ with $\dim(E) > \alpha$, every $s$-Frostman measure $\mu$ supported on $E$ with $s > \alpha$, and every collection of pairwise separated compact subsets 
$E_1, E_2, \dots, E_{k+1} \subset E$
satisfying $\mu(E_i) > 0$ for all $i=1,\dots,k+1$, one has that for 
\[
\mu_{E_1} \times \mu_{E_2} \times \cdots \times \mu_{E_k}
\text{-a.e. } (x_1,\dots,x_k) \in E_1 \times \cdots \times E_k,
\]
it holds that
\[
\mathcal{L}^k\!\left(\Delta^{k\text{-star}}_{x_1,\dots,x_k}(E_{k+1})\right) > 0.
\]
In other words, pinned positive measure of $k$-stars holds for $k$-tuples of pins $(x_1,x_2,\dots ,x_k)$ contained in a $(\Pi_{i=1}^k \mu_{E_i})$-full subset of $\Pi_{i=1}^{k}E_i\subset \R^{kd}$.
\end{defn}

\begin{rem}
    If $\alpha\in(0,d)$ is suitable for positive measure of $k$-stars, then it is also suitable for positive measure of $k'$ stars, for any $1\leq k'<k$. 
\end{rem}

\begin{lem}\label{lem: connected components}
Let $G$ be a graph with connected components $G_1,\cdots, G_\ell$. Let $\Phi$ be an edge function vector of $G$ and $\Phi^i$ be its restriction on the connected component $G_i$. For any pin set $\mathcal{P}$ of $G$, let $\mathcal{P}_i$ denote its restriction on the component $G_i$. Let $E\subset \mathbb{R}^d$ be a compact set such that for all $i=1,\cdots, \ell$, its pinned $(G_i,\Phi^i)$-graph set $\Delta^{G_i,\Phi^i}_{\mathcal{P}_i,\{x^i_j\}_j}$ has positive Lebesgue measure for some points $\{x^i_j\}_j$. Then, its $(G,\Phi)$-graph set $\Delta^{G,\Phi}_{\mathcal{P},\{x^i_j\}_{i,j}}$ must also have positive Lebesgue measure.
\end{lem}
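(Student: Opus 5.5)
The plan is to identify $\Delta^{G,\Phi}_{\mathcal{P},\{x^i_j\}}(E)$, up to the fixed permutation of coordinates that groups the edges of $G$ by component, with a subset of the Cartesian product
$$A_1\times A_2\times\cdots\times A_\ell,\qquad A_i:=\Delta^{G_i,\Phi^i}_{\mathcal{P}_i,\{x^i_j\}_j}(E)\subset\R^{|\mathcal{E}_i|}.$$
Since $\Phi_e$ only depends on the two endpoints of $e$, and every edge lies in exactly one component, we have $\Phi(x,y)=(\Phi^1(\cdot),\dots,\Phi^\ell(\cdot))$, where the $i$-th block only involves the vertices of $G_i$. Hence the graph set of $G$ consists exactly of those tuples $(a_1,\dots,a_\ell)\in\prod_i A_i$ that admit realizations $c_i$ of $a_i$ (a configuration of the unpinned vertices of $G_i$ in $E$) which are \emph{jointly} distinct: no unpinned point of $c_i$ equals an unpinned point of $c_{i'}$, $i\neq i'$, or a pin of another component. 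Without the distinctness constraint, the lemma would be immediate from $\mathcal{L}^{|\mathcal{E}|}(\prod A_i)=\prod_i\mathcal{L}^{|\mathcal{E}_i|}(A_i)>0$. Components without edges contribute the factor $\R^0$ and only impose distinctness; they are harmless because $E$ is uncountable when $\dim E>0$, so fresh points can always be chosen.

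To deal with distinctness, I would localize. Fix $i$ and cover the configuration space $E^{n_i}$ of the unpinned vertices of $G_i$ by countably many closed dyadic cubes $Q$ of side $2^{-N}$, for every $N$. Then $A_i$ is the countable union over $Q$ of $A_i(Q):=\Phi^i(\{\text{distinct configurations in } Q\cap E^{n_i}\})$. By countable subadditivity, at every scale $N$ there is a cube $Q$ with $\mathcal{L}(A_i(Q))>0$. The goal is to choose cubes $Q_1,\dots,Q_\ell$, one per component, such that:
\begin{itemize}
\item each $A_i(Q_i)$ has positive measure;
\item the coordinate projections of the $Q_i$ (the small cubes in $\R^d$ containing the individual vertices) are pairwise disjoint across components;
\item these projections avoid the pins of all other components.
\end{itemize}
Once this is done, every tuple in $\prod_i A_i(Q_i)$ is realized by jointly distinct configurations. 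So $\prod_i A_i(Q_i)$ is contained in the graph set of $G$, and it has positive measure.

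I would choose the cubes by induction on $\ell$, refining scales as needed. The main obstacle is the situation where positivity of $A_i$ can only be achieved by configurations forcing some vertex to sit at (or arbitrarily near) a point that another component also needs, or at one of the other components' pins. To handle this, I would consider for each component the set $Z_i$ of ``forced points''. A point $z$ belongs to $Z_i$ if, for every $N$, every cube $Q$ with $\mathcal{L}(A_i(Q))>0$ has some vertex-projection containing $z$. At a given scale only finitely many cubes are relevant, so $Z_i$ is contained in a finite union of small cubes. Passing to the limit, I would try to show that $Z_i$ consists of at most $n_i$ points: these are points $z$ such that the pinned set with the additional pin $z$ at a specific vertex already has positive measure.

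The remaining step is then to check that two components cannot force the same point at the same time in an unavoidable way. If they do share a forced point $z$, the plan is to keep $z$ in one component and, in the other, use the positive-measure sets arising from configurations that place the corresponding vertex in shrinking annuli around $z$. Countable subadditivity over the annuli $\{2^{-N-1}<|y-z|\le 2^{-N}\}$ still gives positive measure for one annulus unless the forced vertex sits exactly at $z$. In that exact case, $A_i$ coincides up to null sets with the graph set of $G_i$ with an extra pin at $z$. The same localization argument is then applied to this re-pinned component, with $z$ treated as a pin, and the collision is excluded by the requirement that pins are not shared.

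I expect this forced-point analysis to be the delicate part; everything else is bookkeeping with products and countable unions.
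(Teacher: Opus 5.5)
Your first observation is the paper's entire proof. After grouping coordinates by component, the $(G,\Phi)$-graph set is $A_1\times\cdots\times A_\ell$, so its measure is $\prod_i\mathcal{L}^{|\mathcal{E}_i|}(A_i)>0$. The paper never addresses distinctness across components. Where the lemma is actually invoked (Subcase 2.1 of Theorem~\ref{thm: pinnedchains}), the unpinned vertices range over pairwise separated compact sets $E_{m+1},\dots,E_l$. There joint distinctness is automatic and the product identity is exact.

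The gap is in the forced-point step, and it cannot be closed. Once you impose joint distinctness for a general compact $E$, the statement is false. Take $d\ge 3$, $1<s<d/2$, and Falconer's lattice-type compact set $F\subset\R^d$ with $\dim F=s$ and $\mathcal{L}^1(\Delta(F))=0$. Since $\dim F>1$, the standard $I_1$-energy argument gives $\mathcal{L}^1(\Delta_z(F))>0$ for $\mathcal{L}^d$-a.e.\ $z\in\R^d$. Fix such a $z\notin F$ and set $E=F\cup\{z\}$. Let $G$ be two disjoint edges with $\mathcal{P}=\emptyset$.

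Each component's graph set $\{|y_1-y_2|: y_1\neq y_2\in E\}$ contains $\Delta_z(F)$, so it has positive measure. But four distinct points of $E$ include $z$ at most once. Hence the $G$-graph set lies in $(\Delta(F)\times\R)\cup(\R\times\Delta(F))$, which is null.

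In your language, $z$ is a forced point of \emph{both} components simultaneously. It is not one of the given pins, so your sentence ``the collision is excluded by the requirement that pins are not shared'' has nothing to apply to. Nothing in the hypotheses prevents two components from forcing the same point of $E$, and here the collision genuinely cannot be avoided.

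So no finer localization will help. The fix is to state and use the lemma in the separated-set form $\Delta^{G,\Phi}_{\mathcal{P},x}(E_{m+1},\dots,E_l)$, or without cross-component distinctness. In that form your first paragraph already finishes the proof.
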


\begin{proof}
Observe that $\Delta^{G,\Phi}_{\mathcal{P},\{x^i_j\}_{i,j}}$ is precisely the Cartesian product of all the $\Delta^{G_i,\Phi^i}_{\mathcal{P}_i,\{x^i_j\}_j}$, $i=1,\cdots, \ell$. The claimed result follows immediately.
\end{proof}

\begin{thm}\label{thm: pinnedchains}
    Assume $\alpha\in (0,d)$ is suitable for positive measure of $2$-stars. Let $l\geq 2$ and let $Ch$ be an $(l-1)$-chain with $m\geq 1$ pinned vertices listed in $\mathcal{P}$,  such that no pair of vertices in $\mathcal{P}$ shares an edge. Then, if $E\subset \R^d$ is a compact set satisfying that $\dim(E)>\alpha$, $\mu$ Frostman measure in $E$ and $E_1,E_2,\dots,E_l\subset E$ are separated compact subsets with positive $\mu$ measure, one has
$$\mathcal{L}^{l-1}\left(\Delta_{\mathcal{P},x_1,x_2,\dots ,x_m}^{Ch}(E_{m+1},\dots ,E_l)\right)>0$$
for $\Pi_{i=1}^m\mu_{E_i}$ a.e. $(x_1,x_2,\dots ,x_m)\in E_1\times \dots \times E_m$.
\end{thm}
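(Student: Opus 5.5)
The plan is to reduce to individual segments between pins, then build each segment one vertex at a time along the chain with a Fubini argument. Each step uses only positive measure of $1$-stars and $2$-stars. Positive measure of $1$-stars is available because $\alpha$ is suitable for $2$-stars, by the remark following Definition \ref{def:weak-form-k-star}.

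\textbf{Step 1 (reduction to segments).} Once the pins $x_1,\dots,x_m$ are fixed, the pinned chain set factors as a Cartesian product over the maximal sub-chains whose interior vertices are unpinned. This is the same observation as in Lemma \ref{lem: connected components}: the only shared vertices are the pins, and these are now constants. There are two kinds of segment:
\begin{itemize}
\item \emph{end segments} $x - y_1 - \cdots - y_r$, with one pin at an endpoint;
\item \emph{interior segments} $x_a - y_1 - \cdots - y_r - x_b$, with $r\ge 1$ and pins at both ends.
\end{itemize}
A finite intersection of full sets is full. So it suffices to prove, for each segment, positive measure of its set for a $\mu$-full set of the pins involved. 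I will prove this with the $E_j$ replaced by arbitrary separated compact sets of positive $\mu$-measure. This stronger formulation is what makes the induction close.

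\textbf{Step 2 (end segments, induction on $r$).} I add $y_1, y_2, \dots$ in order, so each new vertex has back-degree $1$.

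For $r=1$ the claim is exactly pinned positive measure of $1$-stars.

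For the inductive step, let $E_{r-1}, E_r$ be the sets for $y_{r-1}, y_r$, and let
\[
G_{r-1}=\{y\in E_{r-1}: \mathcal{L}^1(\Delta_y(E_r))>0\}.
\]
By $1$-star suitability, $G_{r-1}$ is $\mu_{E_{r-1}}$-full. By inner regularity, choose a compact $K\subset G_{r-1}$ with $\mu(K)>0$. Apply the induction hypothesis to the chain $x - y_1 - \cdots - y_{r-1}$ with $E_{r-1}$ replaced by $K$. This gives that the set $S'\subset\R^{r-1}$ has positive measure for a.e. $x$.

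The full set $S\subset\R^r$ is compact, being a continuous image of a compact set, hence measurable. For every $t\in S'$ there is a realizing $y_{r-1}\in K$, and then the fibre $\{s:(t,s)\in S\}$ contains $\Delta_{y_{r-1}}(E_r)$, which has positive $\mathcal{L}^1$ measure. Fubini then gives $\mathcal{L}^r(S)>0$.

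\textbf{Step 3 (interior segments).} The only new ingredient is the last vertex $y_r$, which has two back-neighbours, $y_{r-1}$ (or $x_a$ when $r=1$) and the pin $x_b$. This is where the $2$-star hypothesis is used.

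By suitability for $2$-stars applied to $E_{r-1}, E_b, E_r$, we have $\mathcal{L}^2(\Delta^{2\text{-star}}_{y,x_b}(E_r))>0$ for $\mu_{E_{r-1}}\times\mu_{E_b}$-a.e. pair $(y,x_b)$. By Fubini, for $\mu_{E_b}$-a.e. $x_b$ the set
\[
G^{x_b}=\{y\in E_{r-1} : \mathcal{L}^2(\Delta^{2\text{-star}}_{y,x_b}(E_r))>0\}
\]
is $\mu_{E_{r-1}}$-full.
\begin{itemize}
\item For $r=1$ this is directly the claim, for a.e. $(x_a,x_b)$.
\item For $r\ge 2$, fix such an $x_b$ and pick a compact $K\subset G^{x_b}$ of positive measure. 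Run Step 2 on $x_a - y_1 - \cdots - y_{r-1}$ with $E_{r-1}$ replaced by $K$, obtaining a set of positive $\mathcal{L}^{r-1}$ measure for a.e. $x_a$. Then apply Fubini exactly as before, now with $2$-dimensional fibres.
\end{itemize}
Since the fixed $x_b$ ranges over a full set and the $x_a$-condition is a.e. for each such $x_b$, one more application of Fubini gives the statement for $\mu_{E_a}\times\mu_{E_b}$-a.e. $(x_a,x_b)$.

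\textbf{Main obstacle.} I expect the delicate points to be measurability and the order of quantifiers. The sets $G_{r-1}$ and $G^{x_b}$ must be measurable, so that the Fubini and inner-regularity steps are legitimate. I would handle this by working directly with the full Borel subsets provided by Definition \ref{def:weak-form-k-star}, rather than the exact good sets. The choice of $K$ depends on $x_b$, but the induction hypothesis holds for every admissible $K$, so this dependence causes no problem.
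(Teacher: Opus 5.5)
Your proof is correct and essentially the paper's. Both prune the neighbour's set to a compact subset of its good set for the next star, apply the induction hypothesis with that pruned set, and conclude by Fubini over the fibres; the $2$-star hypothesis enters exactly where a vertex attaches to one pin and one unpinned vertex, which is the paper's Subcase~2.2 and your Step~3.

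The only difference is bookkeeping: splitting into pin-to-pin segments first and walking each segment in order lets you avoid the paper's case analysis over an arbitrary $2$-admissible order, including its Subcase~2.3 where an extra pin is inserted. One small addition for your final Fubini step in Step~3: the set of good pairs $(x_a,x_b)$ is Borel, because the $\mathcal{L}^{r+1}$-measure of the compact segment set is upper semicontinuous in the pins, and that is what this step requires.
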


\begin{proof}
    Let us prove it by induction on $n$, the number of unpinned vertices in the chain (then $n+m=l$). 
    If $n=1$, that means all but one vertex are pinned. This is either a $1$-chain with $1$ pin or a $2$-chain with two pins at its endpoints. Positive measure holds for $\mu_{E_1}$ a.e. point $x_1$ or $\mu_{E_1}\times \mu_{E_2}$  a.e. $(x_1,x_2)$, respectively, as a consequence of the assumption on the threshold $\alpha$.

    Next, assume that $n\geq 2$ and that the conclusion is known for chains which have at most $n-1$ unpinned vertices. Take any $2$-admissible construction order $$\mathcal{O}=(v_1,v_2,\dots ,v_m,v_{m+1},v_{m+2},\dots ,v_{m+n}=v_l)$$ for $Ch$. Let us delete $v_{m+n}$ and its adjacent edges, which gives us a subgraph $\tilde{Ch}$ (potentially disconnected). There are two cases: either $v_{m+n}$ is a leaf (only one edge emanating from $v_{m+n}$) or $v_{m+n}$ is not a leaf.

    \textbf{Case 1: $v_{m+n}$ is a leaf}. In this case, after deleting $v_{m+n}$ and its adjacent edge, we are left with a new chain $\tilde{Ch}$ with pins in the same vertices as before, but now one fewer unpinned vertex. Let $1\leq r\leq m+n-1$ be the index of the vertex connected to $v_{m+n}$ by an edge in $Ch$.
    
    \textbf{Subcase 1.1}: $v_r$ is unpinned vertex:  Since for $\mu_{E_r}$ a.e. $x_r$ we have positive measure for $\Delta_{x_r}(E_{m+n})$, we can prune $E_r$ to a smaller compact set $E_r'\subset E_r$ such that any pin $x_r\in E_r'$ gives positive measure for $\Delta_{x_r}(E_{m+n})$. Then apply the induction hypothesis to $\tilde{Ch}$ and the sets $\{E_i\}_{1\leq i\neq r\leq m+n-1}$ and $E_r'$ to get that for $\Pi_{i=1}^m\mu_{E_i}$ a.e $(x_1,x_2,\dots,x_m)\in \Pi_{i=1}^{m} E_i$  one has $\Delta^{\tilde{Ch}}_{\mathcal{P},x_1,\dots ,x_m}(E_{m+1},\dots E_{r-1},E_{r}',E_{r+1},\dots, E_{m+n-1})$. From Fubini, for each such $m$-tuple $(x_1,x_2,\dots, x_m)$ the pinned distance set
    $$\Delta^{Ch}_{\mathcal{P},x_1,\dots ,x_m}(E_{m+1},\dots E_{r-1},E_{r}',E_{r+1},\dots, E_{m+n})$$ has positive measure. Since $E_r'\subset E_r$, that is enough for the result to follow.

    \textbf{Subcase 1.2}: $v_r$ is a pinned vertex. By the induction hypothesis, one has that for $\mu_{E_r}$ a.e $x_r$ in $E_r$, $\Delta_{x_r}(E_{m+n})$ has positive Lebesgue measure, and for $$\mu_{E_r}\times \Pi_{1\leq i\neq r\leq m}\mu_{E_i} \text{ a.e. } (x_r,x_1, \dots, x_{r-1}, x_{r+1}, \dots ,x_m )\in E_r\times \Pi_{1\leq i\neq r\leq m} E_i$$ one has $\Delta^{\tilde{Ch}}_{\mathcal{P},x_1,\dots, x_m}(E_{m+1}, \dots ,E_{m+n-1})$ has positive measure. That is enough to guarantee that there is a $(\Pi_{i=1}^{m}\mu_{E_i})$-full subset of $E_1\times \dots \times  E_m$ such that for any $(x_1, \dots ,x_m)$ in such set, both things happen. Indeed, say $\mathcal{E}^{(m)}\subset \Pi_{i=1}^m E_i$ is the $(\Pi_{i=1}^{m}\mu_{E_i})$-full set and $\mathcal{E}^{(1)}_{r}\subset E_r$ is $\mu_{E_r}$-full set we had in the previous steps, then take $$\mathcal{F}^{(m)}=\mathcal{E}^{(m)}\cap ( E_1\times \dots \times E_{r-1}\times \mathcal{E}_r^{(1)}\times E_{r+1}\times \dots E_m).$$
    
    Any $m$-tuple $(x_1,\dots ,x_m)\in \mathcal{F}^{(m)}$ gives positive measure for  $$\Delta^{Ch}_{\mathcal{P},x_1,\dots ,x_m}(E_{m+1}, E_{m+2} \dots ,E_{m+n})=\Delta^{\tilde{Ch}}_{\mathcal{P},x_1,\dots ,x_m}(E_{m+1}, E_{m+2} \dots ,E_{m+n-1})\times \Delta_{x_r}(E_{m+n}).$$ 

    \textbf{Case 2}: $v_{m+n}$ is not a leaf.

    \textbf{Subcase 2.1.} $v_{m+n}$ is connected to two pinned vertices in $Ch$.
    When deleting $v_{m+n}$ and the adjacent vertices, we are left with a disconnected graph $\tilde{Ch}$  made of two disconnected subchains. We apply the induction hypothesis to each of these subchains separately. Using Lemma \ref{lem: connected components}, this produces full product measure of $m$-tuples of pins $(x_1,x_2,\dots ,x_m)\in E_1\times \dots \times E_m$ such that $\Delta^{\tilde{Ch}}_{\mathcal{P},x_1,\dots, x_m}(E_{m+1},E_{m+2},\dots ,E_{m+n-1})$ has positive Lebesgue measure. If $r_1\neq r_2\in \{1,2,\dots ,m\}$ are the indices of the vertices connected to $v_{m+n}$ in $Ch$, we also know that $\mu_{E_{r_1}}\times \mu_{E_{r_2}}$ a.e. $(x_{r_1},x_{r_2})\in E_{r_1}\times E_{r_2}$ gives $\mathcal{L}^2(\Delta^{2-star}_{x_{r_1},x_{r_2}}(E_{m+n}))>0$.  By intersecting a couple of $(\Pi_{i=1}^{m}\mu_{E_i})$-full sets, there is still a $(\Pi_{i=1}^{m}\mu_{E_i})$-full set of $m$-tuples $(x_1,x_2,\dots ,x_m)$ for which both pinned distance sets have positive measure. Any such $m$-tuple will produce positive measure for $$\Delta_{\mathcal{P},x_1,\dots, x_m}^{Ch}(E_{m+1},\dots ,E_{m+n})=\Delta_{\mathcal{P},x_1,\dots,x_m}^{\tilde{Ch}}(E_{m+1},\dots ,E_{m+n-1})\times \Delta_{x_{r_1},x_{r_2}}^{2-star}(E_{m+n}).$$
      
      \textbf{Subcase 2.2.} $v_{m+n}$ is connected to one pinned vertex and one unpinned vertex in $Ch$. Let $p$ and $u$ be the indices of the pinned and unpinned vertices that $v_{m+n}$ is connected to in $Ch$, respectively. When deleting $v_{m+n}$ and adjacent edges we are left with two subchains, $Ch_u,Ch_p$, with endpoints in $v_u$ and $v_p$, respectively.  Since $\alpha$ is suitable for positive measure of two stars, we have that for $\mu_{E_p}\times \mu_{E_u}$ a.e. pair $(x_p,x_u)\in E_p\times E_u$ one has $\mathcal{L}^2(\Delta^{2-star}_{x_p,x_u}(E_{m+n}))>0$. In particular, there is a set $E_{p}'\subset E_p$ of full $\mu_{E_p}$ measure such that for every $x_p\in E_p'$, 
      $$E_u'(x_p):=\{x_u\in E_u\colon \mathcal{L}^2(\Delta_{x_p,x_u}^{2-star}(E_{m+n}))>0\} \text{ has full measure }\mu_{E_u}(E_u).$$

      Let $\mathcal{P}_p$ and $\mathcal{P}_u$ be the set of pins induced in $Ch_p$ and $Ch_u$ from $(Ch,\mathcal{P})$, respectively. Let $m_p=|\mathcal{P}_p|$ and $m_u=|\mathcal{P}_u|$, so that $m_p+m_u=m$.
      Apply the induction hypothesis in $(Ch_p,\mathcal{P}_p)$ with $E_p$ replaced by $E_p'$, getting a full product measure of $m_p$-tuples of good pins.
    Next, for each $x_p\in E_p'$ one can take a compact set $K_u(x_p)\subset E_u'(x_p)$ with $\mu_{E_u}(K_u(x_p))>0$ and apply the induction hypothesis in the subchain of endpoint $v_u$ with $K_u(x_p)$ playing the role of $E_u$, again one get subset of full product measure for $m_u$ tuples of good pins for $(Ch_u,\mathcal{P}_u)$. By Fubini's theorem, this implies full measure for the entire pinned distance set of $Ch$.

    \textbf{Subcase 2.3.} $v_{m+n}$ is connected by edges to two unpinned vertices $v_{r_1}$ and $v_{r_2}$. In this case, we can prove a stronger result, where we pin at the extra location  $v_{m+n}$. For that, apply the induction hypothesis to the two subchains that one gets by splitting $Ch$ at $v_{m+n}$ where the pins in each subchain are two ones inherited from $(Ch,\mathcal{P})$ plus a new pin at $v_{m+n}$. In each subchain, one does have less than $n$ unpinned vertices, and by the induction hypothesis, we get a full product measure of good pins. By intersecting a couple of full sets, we get the desired result. 
\end{proof}

Next, we prove a positive measure result for pinned $l$-cycles, $l\geq 4$. The basic idea is to view the necklace as a pair of chains that share their endpoints; such a perspective was used in \cite{GIP17} as well, though in that paper, the Cauchy-Schwarz method utilized required the additional restriction that the cycle was of even length and constant gap. We believe that our theorem is one of the first results in the literature that holds for cycles of arbitrary lengths in the continuous setting. Moreover, unlike the method in \cite{GIP17}, which implicitly splits the necklace into a pair of symmetric chains in an $L^2$ based estimate, our approach uses the input of the positive measure of the chain sets as a black box and does not need the two chains to be of the same length.

%\textcolor{red}{Also, another difference is that they don't use the Lebesgue measure result for the subchains as a black box, right?}

\begin{thm}
Assume that $\alpha\in (0,d)$ is suitable for $2$-stars. Let $l\geq 4$. Let $C_l$ be an $l$-cycle graph with $m\geq0$ pins listed in $\mathcal{P}$, such that no pair of vertices in $\mathcal{P}$ shares an edge. Then, if $E\subset \R^d$ is a compact set satisfying that $\dim(E)>\alpha$, $\mu$ Frostman measure in $E$ and $E_1,E_2,\dots,E_l\subset E$ are separated compact subsets with positive $\mu$ measure, one has positive measure of pinned at $\mathcal{P}$ $l$-cycles in the sense that
$$\mathcal{L}^l\left(\Delta_{\mathcal{P},x_1,x_2,\dots ,x_m}^{C_l}(E_{m+1},\dots,E_l)\right)>0$$
for $\Pi_{i=1}^m\mu_{E_i}$ a.e. $(x_1,x_2,\dots ,x_m)\in E_1\times \dots \times E_m$. 
\end{thm}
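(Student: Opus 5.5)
Reduce to the chain theorem (Theorem~\ref{thm: pinnedchains}) by adding one extra pin, then integrate that pin out with Fubini. The case $m=0$ is included by the same argument.

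\textbf{Step 1: choose the auxiliary pin.} Since $l\ge 4$ and no two pins are adjacent, we can find an unpinned vertex $w$ of $C_l$. If $m\ge1$ we want both neighbours of $w$ unpinned, or at least we want that adding $w$ as a pin keeps the pin set independent. If that fails, take instead the unpinned vertex $w$ and pin it anyway. The paper's convention only forbids adjacent pins, but a pin next to an existing pin is harmless for the chain argument after splitting, since that edge becomes an end edge of a chain with one pinned endpoint.

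Cutting $C_l$ at $w$ means duplicating $w$ into two copies $w',w''$. This produces a chain $Ch$ on $l+1$ vertices with $l$ edges. Its endpoints are both pinned at the same point $x_w\in E_w$, and it carries the original pins $\mathcal{P}$. The key observation is that for a fixed $x_w$,
\[
\Delta^{C_l}_{\mathcal{P},x_1,\dots,x_m,x_w}(\ldots)=\Delta^{Ch}_{\mathcal{P}\cup\{w',w''\},x_1,\dots,x_m,x_w,x_w}(\ldots).
\]
Theorem~\ref{thm: pinnedchains} requires separated sets, however, and both endpoints share $E_w$.

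\textbf{Step 2: handle the doubled endpoint.} To handle this, I would instead split $C_l$ at $w$ into two chains $A$ and $B$ sharing only the pinned vertex $w$. Here $A$ contains one neighbour $a$ of $w$ and $B$ contains the other neighbour $b$. Because $l\ge4$, after removing $w$ the path $a\cdots b$ has at least 3 vertices. I would therefore split that path at an interior edge $(u,u')$ and obtain:
\begin{itemize}
\item a chain $w\!-\!a\!-\cdots-\!u$ and a chain $u'\!-\cdots-\!b\!-\!w$;
\item the edge $(u,u')$ left over, which closes the cycle.
\end{itemize}
A cleaner route, following the remark about viewing the necklace as two chains sharing their endpoints, is to take two vertices $w_1,w_2$ at distance at least 2 around the cycle, pin both, and split $C_l$ into two chains from $w_1$ to $w_2$. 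Each chain then has pinned endpoints plus inherited pins, all living on the separated sets $E_1,\dots,E_l$.

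Theorem~\ref{thm: pinnedchains} applies to each chain for a.e. choice of pins, with both chains sharing the same $(x_{w_1},x_{w_2})$. The set of $(m+2)$-tuples where both chain sets have positive measure is an intersection of two full sets, hence full. By Lemma~\ref{lem: connected components}, the cycle distance set at these pins is the product of the two chain sets, so it has positive $\mathcal{L}^l$ measure. The choice of $w_1,w_2$ has to respect the no-adjacent-pins requirement, and some care is needed here. If $w_1$ or $w_2$ were already pins, we simply do not add them.

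\textbf{Step 3: unpin $w_1,w_2$.} The set $\Delta^{C_l}_{\mathcal{P}}(E_{m+1},\dots,E_l)$ with $w_1,w_2$ free contains the set with $w_1,w_2$ pinned at any good values. For a.e. $(x_1,\dots,x_m)$, Fubini gives $\mu_{E_{w_1}}\times\mu_{E_{w_2}}$-positively many good $(x_{w_1},x_{w_2})$, and one good pair already suffices for positive measure.

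\textbf{Main obstacle.} The main difficulty is the combinatorial choice of $w_1,w_2$. They must be nonadjacent to each other, and nonadjacent to existing pins unless they already are pins. This is needed so that Theorem~\ref{thm: pinnedchains}'s hypothesis holds on each subchain, with each subchain having at least one pin and at least one unpinned vertex.

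The dense cases need the most care: even $l$ with $m=l/2$ alternating pins, where we just take $w_1,w_2$ to be existing pins, and small $l=4,5$. I expect that $l\ge4$ is exactly what guarantees two pins (existing or new) splitting the cycle into chains each containing an edge. If a subchain has no unpinned vertex, this is impossible: its edges would join two pins, which are never adjacent. Hence each chain is nondegenerate.
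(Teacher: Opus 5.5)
Your final route (Step 2's ``cleaner route'' plus Step 3) is essentially the paper's proof: use two existing pins when $m\ge 2$ or add pins at distance $2$ apart when $m\le 1$, split the cycle into two pinned chains, apply Theorem~\ref{thm: pinnedchains} to each, intersect the full sets, use the product structure of the distance set, and remove the auxiliary pins by Fubini. Drop the Step 1 remark that a pin adjacent to an existing pin is harmless, since that edge has a fixed length and would confine the set to a hyperplane of measure zero; your final argument already avoids this.
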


\begin{rem}
    We remark that the case $l=3$ (triangles) is not included in the theorem above, and for that case, we still need the stronger $L^2$ statement for $2$-stars.
\end{rem}

 \begin{proof}
     We can assume without loss of generality that $m\geq 2$. If $m=1$ or $m=0$, we can pick one or two extra vertices to pin, respectively, in a way that we get a cycle with two nonconsecutive pins (for this, we must assume $l\geq 4$ so that pinning two non-consecutive vertices in the cycle is possible). The fact that one has a positive measure for this doubly pinned cycle (case $m=2$) implies the result for a subset of those pins.

     In the case $m\geq 2$, label the pinned vertices $v_1,v_2, \dots ,v_m$. The pinned cycle can be split into two pinned chains $Ch_1, Ch_2$ with pinned endpoints $v_1,v_2$ (and possibly some other pins along $Ch_i$).

     \begin{figure}[h]
\centering

     \begin{tikzpicture}[scale=2, line cap=round, line join=round]

% -------- Styles --------
\tikzset{
  blk/.style={circle, fill=black, inner sep=1.4pt},
  grn/.style={circle, fill=magenta, inner sep=1.6pt},
  ed/.style={thick},
  gap/.style={gray, dashed, thick, -}
}

% -------- Coordinates (placed around an "almost circle") --------
% Top gap endpoints (magenta)
\coordinate (GtopL) at (-0.55,  1.05);
\coordinate (GtopR) at ( 0.55,  1.05);

% Bottom gap endpoints (magenta)
\coordinate (GbotL) at (-0.40, -1.05);
\coordinate (GbotR) at ( 0.40, -1.05);

% Left chain (4 edges = 5 vertices): GtopL -> B1 -> B2 -> B3 -> GbotL
\coordinate (B1) at (-1.10,  0.70);
\coordinate (B2) at (-1.25,  0.00);
\coordinate (B3) at (-0.95, -0.65);

% Right chain (5 edges = 6 vertices): GbotR -> C1 -> C2 -> C3 -> C4 -> GtopR
\coordinate (C1) at ( 0.95, -0.75);
\coordinate (C2) at ( 1.25, -0.10);
\coordinate (C3) at ( 1.10,  0.50);
\coordinate (C4) at ( 0.85,  0.85);

% -------- Draw the two disconnected chains --------
\draw[ed] (GtopL) -- (B1) -- (B2) -- (B3) -- (GbotL);
\draw[ed] (GbotR) -- (C1) -- (C2) -- (C3) -- (C4) -- (GtopR);

% -------- Draw the dashed "missing edges" --------
\draw[gap] (GtopL) -- (GtopR);
\draw[gap] (GbotL) -- (GbotR);

% -------- Vertices --------
\node[grn] at (GtopL) {};
\node[grn] at (GtopR) {};
\node[grn] at (GbotL) {};
\node[grn] at (GbotR) {};
\node[grn] at (C3) {};
\node[grn] at (B2) {};

\foreach \P in {B1,B3,C1,C2,C4}{
  \node[blk] at (\P) {};
}

% -------- Labels (edit these to match your notation) --------
\node[magenta, above] at (GtopL) {$v_1$};
\node[magenta, above] at (GtopR) {$v_1$};

\node[magenta, below] at (GbotL) {$v_2$};
\node[magenta, below] at (GbotR) {$v_2$};

\node[left]  at (B1) {$v_{u_{1,1}}$};
\node[magenta,left]  at (B2) {$v_{p_{1,1}}$};
\node[below] at (B3) {$v_{u_{1,2}}$};

\node[below] at (C1) {$v_{u_{2,3}}$};
\node[right] at (C2) {$v_{u_{2,2}}$};
\node[magenta,right] at (C3) {$v_{p_{2,1}}$};
\node[right] at (C4) {$v_{u_{2,1}}$}; % placeholder; change as needed

\node at (-2.25, 0) {$Ch_1$};
\node at (2.25, 0) {$Ch_2$};
\end{tikzpicture}
\caption{A split of a pinned $9$-cycle (with $4$ pins) into two pinned chains sharing pins at $v_1,v_2$. 
The dashed gray edges indicate how to glue the two chains back to the original pinned cycle. The vertices in magenta are the pinned ones.}
\label{fig:twochainssplit}
\end{figure}
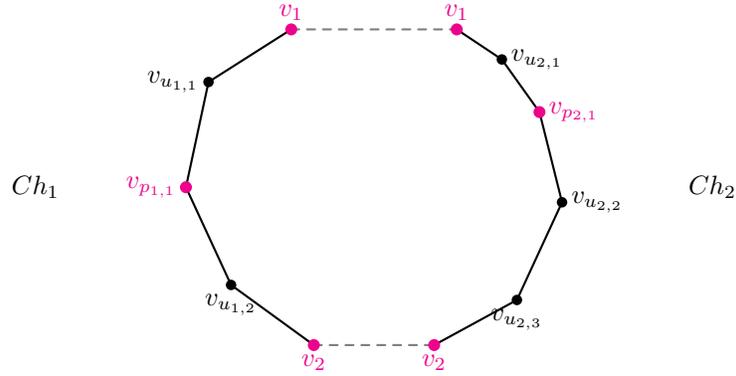

     For $i=1,2$, let $M_i$ and $N_i$ be the number of pinned and unpinned vertices in $Ch_i$, respectively. Note that $M_1+M_2=m+2$ because the chains share two of their pins. For each $i=1,2$ say $\mathcal{P}_{i}=\{v_1,v_2\}\cup \{v_{p_{i,j}}\}_{j=1}^{M_i}$ are the pinned vertices of $Ch_i$ and $\{v_{u_{i,j}}\}_{j=1}^{N_i}$ are the unpinned vertices of $Ch_i$, see Figure \ref{fig:twochainssplit} for an example. For each $i=1,2$, applying Theorem \ref{thm: pinnedchains} to the pinned chain graphs $(Ch_i,\mathcal{P}_i)$ we get that for $\mu_{E_1} \times\mu_{E_2}\times \Pi_{j=1}^{M_i} \mu_{E_{p_{i,j}}}$ a.e. $(x_1,x_2,x_{p_{i,1}},\dots, x_{p_{i,M_i}})$ one has 
     
     \begin{equation}\label{eq:positemeasurepieceofthecycle}
     \mathcal{L}^{|\mathcal{E}_{i}|}\left(\Delta^{Ch_i}_{\mathcal{P}_i,x_1,x_2, x_{p_{i,1}},\dots, x_{p_{i,M_i}}}(E_{u_{i,1}},E_{u_{i,2}},\dots , E_{u_{i,N_i}})\right)>0,
     \end{equation}
     where $|\mathcal{E}_i|$ is the number of edges in $Ch_i$.

For each $i=1,2$, there exists $\mathcal{E}_{1,2}^i\subset E_1\times E_2$ of full $\mu_{E_1}\times \mu_{E_2}$ measure, such that, for each $(x_1,x_2)\in \mathcal{E}_{12}^i$, $$\Pi_{j=1}^{M_i} \mu_{E_{p_{i,j}}}(\{(x_{p_{i,1}},x_{p_{i,2}},\dots, x_{p_{i,M_i}})\colon (\ref{eq:positemeasurepieceofthecycle}) \text{ holds }\})=\Pi_{j=1}^{M_i} \mu(E_{p_{i,j}}).$$

      Then the set $\mathcal{E}_{12}:=\mathcal{E}_{12}^1\cap \mathcal{E}_{12}^2\subset E_1\times E_2$ still has full $\mu_{E_1}\times \mu_{E_2}$ measure. For each $(x_1,x_2)\in \mathcal{E}_{12}$, 
      $$\mathcal{L}^l(\Delta^{C_l}_{\mathcal{P},x_1,\dots, x_m}(E_{m+1},\dots ,E_l))=\Pi_{i=1}^2 \mathcal{L}^{|\mathcal{E}_{i}|}\left(\Delta^{Ch_i}_{\mathcal{P}_i,x_1,x_2, x_{p_{i,1}},\dots, x_{p_{i,M_i}}}(E_{u_{i,1}},E_{u_{i,2}},\dots , E_{u_{i,N_i}})\right)>0, $$
      for all $(x_{p_{1,1}},\dots ,x_{p_{1,M_1}},x_{p_{2,1}},\dots ,x_{p_{2,M_2}})$ in a set of full $\Pi_{i=1}^2\Pi_{j=1}^{M_i} \mu_{E_{p_{i,j}}}$ measure.
 \end{proof}

\section{Proof of general structural Theorems \ref{thm: firststructuralthm} and \ref{thm: secondstructuralthm}}\label{sec: proofofstructuraltheorems}

Given a pinned graph $(G,\mathcal{P})$ with pins in $\mathcal{P}=(v_1,v_2,\dots ,v_m)$ and construction order $\mathcal{O}=(v_1,v_2, \dots ,v_l)$, we let $\eta_{j,\mathcal{O}}$ denote the index set of vertices sharing an edge with $v_j$ that come before $v_j$ in the ordering $\mathcal{O}$; we will often suppress the dependence on $\mathcal{O}$ from the notation. Denote $\epsilon_j=|\eta_j|$, that is, $\epsilon_j$ is the the back degree of $v_j$ with respect to the ordering $\mathcal{O}$. We also let $\eta_{j,P}$ denote the subset of $\eta_j$ consisting of indices of pinned vertices and $\eta_{j,U}$ be the subset consisting of indices of unpinned vertices; analogously, use $\epsilon_{j,P}$ and $\epsilon_{j,U}$ to denote their cardinalities. Let $l$ be the total number of vertices in the graph $G$, recall that $m$ denotes the number of pinned vertices, and let $n:=l-m$ (number of unpinned vertices). For ease of notation, denote $x=(x_1,\cdots, x_m)$, and for $r\ge m+1$, we let $y_{(r)}$ denote the $(r-m)$-tuple $(y_{m+1},\ldots,y_{r})$. Any mention of a full subset in what follows refers to the notion introduced earlier in Definition \ref{def: fullset}.

\begin{defn}\label{def: goodpin}
    Suppose $m\geq 1$ (at least one pin). For a compact set $E\subset \R^d$ supporting a Frostman measure $\mu_E$, fix separated compact subsets $E_1,\ldots, E_l\subset E$ with $\mu(E_j)>0$ for all $j=1,2\dots, l$, and let $\mu_{E_j}$ denote the restriction of $\mu$ to $E_j$. We say that an $m$-tuple of pins $(x_1,\ldots,x_m)\in E_1\times E_2\times \dots\times  E_m$ is good for $(G,\mathcal{P},\mathcal{O},\{E_i\}_{i=1}^l)$, if the following holds true:

\begin{enumerate}\item $\left(d_{(x_i)_{i\in\eta_{m+1,P}}}^{\epsilon_{m+1}-star}\right)_*(\mu_{E_{m+1}})\in L^2(\R^{\epsilon_{m+1}})$.
\item There exists $(E_{m+1})^G_{x}\subset E_{m+1}$ such that $(E_{m+1})^G_{x}$ is $\mu_{m+1}$-full, and that for each $y_{m+1}\in (E_{m+1})^G_{x}$, the pushforward measure $\left(d_{(x_i)_{i\in\eta_{m+2,P}},(y_{j})_{j\in \eta_{m+2,U}}}^{\epsilon_{m+2}-star}\right)_*(\mu_{E_{m+2}})\in L^2(\R^{\epsilon_{m+2}})$. 
\item For each $y_{m+1}\in (E_{m+1})^G_{x}$, the set identified in the above, there exists $(E_{m+2})^G_{x,y_{(m+1)}}\subset E_{m+2}$ such that $(E_{m+2})^G_{x, y_{(m+1)}}$ is $\mu_{m+2}$-full, and that for each $y_{m+2}\in (E_{m+1})^G_{x, y_{(m+1)}}$, the pushforward measure $\left(d_{(x_i)_{i\in\eta_{m+3,P}},(y_j)_{j\in \eta_{m+3, U}}}^{\epsilon_{m+3}-star}\right)_*(\mu_{E_{m+3}})\in L^2(\R^{\epsilon_{m+3}})$.
\item $\cdots$
\item For each $y_{m+1}\in (E_{m+1})^G_{x}$, each $y_{m+2}\in (E_{m+2})^G_{x, y_{(m+1)}}$, $\cdots$, and each $y_{l-2}\in (E_{l-2})^G_{x,y_{(l-3)}}$, chosen from the previously identified sets, there exists $(E_{l-1})^G_{x,y_{(l-2)}}\subset E_{l-1}$ such that $(E_{l-1})^G_{x, y_{(l-2)}}$ is $\mu_{l-1}$-full, and that for each $y_{l-1}\in (E_{l-1})^G_{x, y_{(l-2)}}$, the pushforward measure $\left(d_{(x_i)_{i\in\eta_{l,P}},(y_j)_{j\in \eta_{l, U}}}^{\epsilon_{l}-star}\right)_*(\mu_{E_{l}})\in L^2(\R^{\epsilon_l})$.
\end{enumerate}

\end{defn}

By the $\ldots$ step, we refer to a series of conditions indexed by $\Upsilon$ (where $\Upsilon$ varies over values so that $\Upsilon\ge 1$ and $l-\Upsilon-3\ge m+1$) and of the form: \\
``For each $y_{m+1}\in (E_{m+1})^G_{x}$, each $y_{m+2}\in (E_{m+2})^G_{x, y_{(m+1)}}$, $\cdots$, and each $y_{l-\Upsilon-2}\in (E_{l-\Upsilon-2})^G_{x,y_{(l-\Upsilon-3)}}$, chosen from the previously identified sets, there exists $(E_{l-\Upsilon-1})^G_{x,y_{(l-\Upsilon-2)}}\subset E_{l-\Upsilon-1}$ such that $(E_{l-\Upsilon-1})^G_{x, y_{(l-\Upsilon-2)}}$ is $\mu_{l-\Upsilon-1}$-full, and that for each $y_{l-\Upsilon-1}\in (E_{l-\Upsilon-1})^G_{x, y_{(l-\Upsilon-2)}}$, the pushforward measure $\left(d_{(x_i)_{i\in\eta_{l-\Upsilon,P}},(y_j)_{j\in \eta_{l-\Upsilon, U}}}^{\epsilon_{l-\Upsilon}-star}\right)_*(\mu_{E_{l-\Upsilon}})\in L^2(\R^{\epsilon_{l-\Upsilon}})$." \\
That is, it covers the natural intermediaries between conditions (3) and (5) in the definition, where we think of decrementing $\Upsilon$ each time until the final condition (5) is the case where $\Upsilon=0$.

\begin{ex}
    
Consider the toy graph $G$ illustrated in Figure \ref{fig:toypinnedforinduction} where $\mathcal{P}=(v_1,v_2)$ are the pinned vertices and with the $4$-admissible ordering $\mathcal{O}=(v_1,v_2,\dots,v_6)$. Suppose that separated compact sets $E$ and $E_1,E_2, \dots ,E_6$ as before have been fixed.

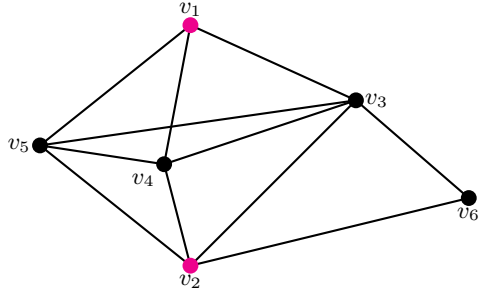
\begin{figure}[h]
\centering
\begin{tikzpicture}[scale=1.0, >=latex]

% ---- styles ----
\tikzset{
  edge/.style    = {thick},
  pin/.style     = {circle, fill=magenta, inner sep=0pt, minimum size=6pt},
  nodept/.style  = {circle, fill=black,   inner sep=0pt, minimum size=6pt},
  lbl/.style     = {font=\small}
}

% ---- coordinates ----
\coordinate (v5) at (-3, 0);
\coordinate (v1) at (-1, 1.6);
\coordinate (v2) at (-1,-1.6);
\coordinate (v4) at (-1.35,-0.25);
\coordinate (v3) at ( 1.2, 0.6);
\coordinate (v6) at ( 2.7,-0.7);
%\coordinate (v7) at ( 4.8,-0.7);

% ---- edges (all straight) ----
% left cluster
\draw[edge] (v5) -- (v1);
\draw[edge] (v5) -- (v4);
\draw[edge] (v5) -- (v3);
\draw[edge] (v5) -- (v2);

\draw[edge] (v1) -- (v4);
\draw[edge] (v1) -- (v3);

\draw[edge] (v2) -- (v4);
\draw[edge] (v2) -- (v3);

\draw[edge] (v4) -- (v3);

% right side
\draw[edge] (v3) -- (v6);
\draw[edge] (v2) -- (v6);
%\draw[edge] (v6) -- (v7);

% ---- vertices ----
\node[pin]    at (v1) {};
\node[pin]    at (v2) {};
\node[nodept] at (v3) {};
\node[nodept] at (v4) {};
\node[nodept] at (v5) {};
\node[nodept] at (v6) {};
%\node[nodept] at (v7) {};

% ---- labels ----
\node[lbl, above]      at (v1) {$v_1$};
\node[lbl, below]      at (v2) {$v_2$};
\node[lbl, right]      at (v3) {$v_3$};
\node[lbl, below left] at (v4) {$v_4$};
\node[lbl, left]       at (v5) {$v_5$};
\node[lbl, below]      at (v6) {$v_6$};
%\node[lbl, right]      at (v7) {$v_7$};

\end{tikzpicture}
\caption{Toy pinned graph.}
  \label{fig:toypinnedforinduction}
\end{figure}

A pair $(x_1,x_2)\in E_1\times E_2$ is $(G,\mathcal{P},\mathcal{O},\{E_i\}_{i=1}^6)$ good if:
\begin{enumerate}
    \item $(d^{2-star}_{x_1,x_2})_{*}(\mu_{E_3})\in L^2(\R^2)$;
    \item There exists a $\mu_{E_3}$-full subset $(E_3)_{x_1,x_2}^{G}\subset E_3$ such that for all $y_3\in (E_3)_{x_1,x_2}^{G}$ 
    $$(d^{3-star}_{x_1,x_2,y_3})_{*}(\mu_{E_4})\in L^{2}(\R^3);$$
    \item For each $y_3\in (E_3)_{x_1,x_2}^{G} $, there exists a $\mu_{E_4}$-full measure subset $(E_4)_{x_1,x_2,y_3}^{G}\subset E_4$  such that for all $y_4\in (E_4)^G_{x_1,x_2,y_3} $
    $$(d^{4-star}_{x_1,x_2,y_3,y_4})_{*}(\mu_{E_5})\in L^{2}(\R^4) ;$$
    \item For each $y_3\in (E_3)_{x_1,x_2}^{G} $ and $y_4\in (E_4)_{x_1,x_2,y_3}^G$ there exists a $\mu_{E_5}$-full measure subset $(E_5)^G_{x_1,x_2,y_3,y_4}= E_5$ such that for all $y_5\in E_5$
    $$(d^{2-star}_{x_2,y_3})_{*}(\mu_{E_6})\in L^2(\R^2).$$
\end{enumerate}

With this definition of good pins in the example above, we can produce a positive measure of distance graphs via a Fubini-like argument. Indeed, we lower bound the Lebesgue measure of the distance set we care about by an iterated integral

\begin{equation*}
    \begin{split}
        \mathcal{L}^{11}(\Delta^G_{x_1,x_2}(E_3,E_4,\dots ,E_6))&\\
\geq{\int_{\Delta_{x_1,x_2}^{2-star}((E_3)^G_{x_1,x_2})}} &{(\int_{\Delta_{x_1,x_2,y_3}^{3-star}((E_4)^G_{x_1,x_2,y_3})}} {(\int_{\Delta_{x_1,x_2,y_3,y_4}^{4-star}((E_5)^G_{x_1,x_2,y_3,y_4})}} \\
        &{(\int_{\Delta_{x_2,y_3}^{2-star}(E_6)}dt_{10}dt_{11})}
{dt_6dt_7dt_8dt_9)}{dt_3dt_4dt_5)}{dt_1dt_2}
    \end{split}
\end{equation*}
where for each $(t_1,t_2)\in \Delta_{x_1,x_2}^{2-star}((E_3)^G_{x_1,x_2})$ we pick $y_3=y_3(t_1,t_2)\in (E_3)^G_{x_1,x_2} $ such that $|y_3-x_1|=t_1$ and $|y_3-x_2|=t_2$, then for each $(t_3,t_4,t_5)\in \Delta_{x_1,x_2,y_3}^{3-star}((E_4)^G_{x_1,x_2,y_3})$ we pick some $y_4=y_4(t_3,t_4,t_5)\in (E_4)^G_{x_1,x_2,y_3}$ such that $(t_3,t_4,t_5)=d^{3-star}_{x_1,x_2,y_3}(y_4)$.%, then for each $(t_6,t_7,t_8,t_9)\in \Delta_{x_1,x_2,y_3,y_4}^{4-star}((E_5)^G_{x_1,x_2,y_3,y_4})$ we pick some $y_5=y_5(t_6,t_7,t_8,t_9)\in (E_5)^G_{x_1,x_2,y_3,y_4}$ such that $(t_6,t_7,t_8,t_9)=d^{4-star}_{x_1,x_2,y_3,y_4}(y_5)$.

From the $L^2$ property for the pushforward measures in the definition of $(x_1,x_2)$ good pin, combined with the fact that for each $3\leq i\leq 5$, $\mu_{E_i}((E_i)^G_{x,y(i-1)})>0$ we can check that the distance sets showing up in the $4$-fold integral in the display above have positive Lebesgue measure. Indeed take a compact set $(K_r)^G_{x,y(r-1)}\subset (E_r)^G_{x,y(r-1)}$ with $\mu_{E_r}((K_r)^G_{x,y(r-1)})>0$, then Lemma \ref{Lemma: L2lemma} implies that $\Delta^{\epsilon_r-star}_{(x_i)_{i\in \eta_{r,P}},(y_j)_{j\in \eta_{r,U}}}((K_r)^G_{x,y(r-1)})>0$. Therefore $\mathcal{L}^{11}(\Delta^G_{x_1,x_2}(E_3,E_4,\dots ,E_6))>0.$

\end{ex}
\medskip

The next theorem is the main technical result needed to conclude the structural Theorem \ref{thm: firststructuralthm}.

\begin{thm}\label{thm:inductionstepinstructuralthm1}
    Suppose $(G,\mathcal{P})$ is a $k$-admissible pinned graph with $m\geq 1$ pinned vertices $\mathcal{P}=(v_1,v_2,\dots ,v_m)$ and any fixed $k$-admissible ordering $\mathcal{O}=(v_1,v_2,\dots ,v_l)$. Then, for any compact set $E\subset \R^d$ with $\dim(E)>\frac{d+k}{2}$ and any $s$-Frostman measure $\mu$ supported in $E$ with $s>\frac{d+k}{2}$ and any separated compact subsets $E_i$ of $E$ with $\mu(E_i)>0$, where $1\leq i\leq l$, one has that  $\mu_{E_1}\times \mu_{E_2}\times\dots \times  \mu_{E_m}$ almost every $m$-tuple $(x_1,x_2,\dots ,x_m)\in E_1\times E_2\times \dots E_m$ is $(G,\mathcal{P},\mathcal{O},\{E_i\}_{i=1}^l)$ good.
\end{thm}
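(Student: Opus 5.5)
I will prove the statement by induction on $n=l-m$, the number of unpinned vertices, with $m$ free. The main input is Theorem \ref{thm:k starinIPPS}, applied at each step to a star with $\epsilon_r\le k$ leaves. Since $s>\frac{d+k}{2}\ge\frac{d+\epsilon_r}{2}$, that theorem applies to $\epsilon_r$-stars. It says that for almost every choice of the $\epsilon_r$ leaves in the corresponding product of separated sets, the pushforward of $\mu_{E_r}$ under the star map lies in $L^2(\R^{\epsilon_r})$. If $\epsilon_r=0$, the condition is vacuous and we interpret it as trivially true. We also need the elementary fact that an $L^2$ statement which holds for $\mu$-a.e. $x_i$ in a product still holds when the other variables are extra dummy coordinates. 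This is Fubini: a set of full measure in the $\eta_r$-coordinates pulls back to a set of full measure in the full product.

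For the inductive step, instead of removing the last vertex, I would view the first unpinned vertex $v_{m+1}$ as a new pin. Set $\mathcal{P}'=\mathcal{P}\cup\{v_{m+1}\}$ and keep the same ordering $\mathcal{O}$. Then $\mathcal{O}$ is still a $k$-admissible construction order for $(G',\mathcal{P}')$, where $G'$ is $G$ with the edges among pins deleted. The pin set may now contain adjacent vertices, but the definition of good pins only involves the back-stars of $v_{m+2},\dots,v_l$, so nothing changes. Conditions (2) through (5) of Definition \ref{def: goodpin} for $(x_1,\dots,x_m)$ then say exactly the following: there is a $\mu_{E_{m+1}}$-full set $(E_{m+1})^G_x$ such that, for each $y_{m+1}$ in it, the $(m+1)$-tuple $(x,y_{m+1})$ is good for $(G',\mathcal{P}',\mathcal{O})$, which has $n-1$ unpinned vertices. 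Condition (1) is the only new requirement.

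By the induction hypothesis, the set $\mathcal{G}'\subset E_1\times\dots\times E_{m+1}$ of good $(m+1)$-tuples has full product measure. Fubini then shows that for $\Pi_{i\le m}\mu_{E_i}$-a.e. $x$, the slice $\{y_{m+1}:(x,y_{m+1})\in\mathcal{G}'\}$ is $\mu_{E_{m+1}}$-full, and we take this slice as $(E_{m+1})^G_x$. Condition (1) holds for a.e. $x$ by Theorem \ref{thm:k starinIPPS} applied to the star formed by $v_{m+1}$ and its pinned back-neighbours $\eta_{m+1}=\eta_{m+1,P}$. Intersecting these two full sets proves the inductive step. The base case $n=1$ is just condition (1), again given by Theorem \ref{thm:k starinIPPS}.

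The main obstacle is measurability. To apply Fubini, the set of good tuples must be measurable, and the nested "there exists a full subset" clauses make this unclear. I would first try to define things canonically: $(E_r)^G_{x,y_{(r-1)}}$ should be the largest admissible set, namely the set of $y_r$ for which all later conditions hold. I would then check that the map $(\text{leaves})\mapsto\|(d^{\epsilon-star})_*\mu_{E_r}\|_{L^2}$, with value $\infty$ when the pushforward is not in $L^2$, is Borel. This can be done by writing the $L^2$ norm as a $\liminf$ of $\int (\phi_\delta*\nu)^2$, where $\phi_\delta$ is a mollifier; these integrals are continuous in the leaves since $\Phi$ is continuous and the sets are separated. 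With this, every slice condition is Borel, and the induction together with Fubini goes through.
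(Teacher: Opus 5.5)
Your argument is correct, but it peels the graph from the opposite end to the paper.

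The paper deletes the \emph{last} vertex $v_l$ and keeps the pin set fixed. It applies the induction hypothesis to the graph obtained by deleting $v_l$, and then merges this with Theorem \ref{thm:k starinIPPS} for the back-star of $v_l$. That star generally has unpinned leaves ($j\in\eta_{l,U}$). So the a.e.\ conclusion must first be disintegrated into nested full sets $(S_{i_r})_{x(l,P),y_{i_1},\dots,y_{i_{r-1}}}$, which are then intersected with the sets $(E_{i_r})^{G'}_{x,y(i_r-1)}$ inherited from the smaller graph.

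You instead promote the \emph{first} unpinned vertex $v_{m+1}$ to a pin. Deleting its edges to the old pins keeps $(G',\mathcal{P}')$ a legitimate pinned graph, and those edges never enter Definition \ref{def: goodpin} afterwards. The induction therefore runs over varying $m$, and Definition \ref{def: goodpin} unfolds literally as ``condition (1), plus: $(x,y_{m+1})$ is good for $(G',\mathcal{P}')$ for all $y_{m+1}$ in a $\mu_{E_{m+1}}$-full set.'' Each step then needs Theorem \ref{thm:k starinIPPS} only for a star all of whose leaves are pins, plus one Fubini slicing. There are no nested $S$-sets and no special handling of $(E_{l-1})$.

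What the paper's version buys in exchange is a fixed $m$ and a step that mirrors the vertex-deletion procedure of Proposition \ref{prop:algorithmkadm}. This is in the spirit of the backwards Fubini argument the paper builds on.

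Your Borel-measurability argument (canonical maximal sets, with the $L^2$ norm written as a $\liminf$ of mollified norms) is correct. It also addresses something the paper passes over silently. But it is more than you need. ``Almost every'' only requires the bad $(m+1)$-tuples to lie in a Borel null set $N$, and Tonelli gives $\mu_{E_{m+1}}(N_x)=0$ for a.e.\ $x$. So $(E_{m+1})^G_x:=E_{m+1}\setminus N_x$ is already a Borel full set of admissible $y_{m+1}$.
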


We postpone the proof of Theorem \ref{thm:inductionstepinstructuralthm1} to the end of the section and now build our way to our structural theorem for Euclidean distances. First, we need a simple lemma about pushforward measures in $L^p$.

\begin{lem}\label{Lemma: L2lemma}
    Let $\mu_F$ be a Frostman measure compactly supported in $F$. Let $\Phi^k:F\rightarrow \R^k$ be a continuous map such that the pushforward measure $(\Phi^k)_{*}(\mu_F)$ is absolutely continuous with respect to Lebesgue measure in $\R^k$ and $\frac{d(\Phi^k)_{*}(\mu_F)}{dx}\in L^p(\R^k)$, for some $1\leq p\leq \infty$, which we will simply denoted by $(\Phi^k)_{*}(\mu_F)\in L^p$. Then for any subset $F'\subset F$ with $\mu_F(F')>0$ and $\mu_{F'}$ the restriction of $\mu_F$ to $F'$, one also has $(\Phi^k)_{*}(\mu_{F'})\in L^p(\R^k)\setminus\{0\}$.
    
    In particular, if $(d^{k-star}_{x_1,x_2,\dots ,x_k})_{*}(\mu_F)\in L^p(\R^k)$, for some $1\leq p\leq \infty$ and $F'$ is a closed subset of $F$ with positive $\mu_F$ measure, then $\Delta^{k-star}_{x_1,\dots,x_k}(F')$ has positive measure.
\end{lem}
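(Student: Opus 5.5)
The first claim follows from domination of measures; the second from the fact that a nonzero absolutely continuous measure cannot be carried by a Lebesgue-null set. The plan has three short steps.

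\textbf{Step 1: domination.} Since $\mu_{F'}\le \mu_F$ as measures, pushforwards preserve this: for every Borel $A\subset\R^k$,
$$(\Phi^k)_*(\mu_{F'})(A)=\mu_F\bigl(F'\cap(\Phi^k)^{-1}(A)\bigr)\le \mu_F\bigl((\Phi^k)^{-1}(A)\bigr)=(\Phi^k)_*(\mu_F)(A).$$
Consequently, if $\mathcal{L}^k(A)=0$ then $(\Phi^k)_*(\mu_F)(A)=0$, hence $(\Phi^k)_*(\mu_{F'})(A)=0$. So $(\Phi^k)_*(\mu_{F'})$ is absolutely continuous with respect to $\mathcal{L}^k$.

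\textbf{Step 2: density bound.} Write $g$ for the density of $(\Phi^k)_*(\mu_F)$ and $g'$ for the density of $(\Phi^k)_*(\mu_{F'})$. The inequality of Step 1 gives $\int_A g'\le\int_A g$ for every Borel $A$. Taking $A=\{g'>g\}$ shows $0\le g'\le g$ almost everywhere, so $\|g'\|_{L^p}\le\|g\|_{L^p}<\infty$ for every $1\le p\le\infty$. The total mass of $(\Phi^k)_*(\mu_{F'})$ is $\mu_F(F')>0$, so $g'\neq 0$. This proves the first claim.

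\textbf{Step 3: positive measure of the image.} Apply Steps 1--2 with $\Phi^k=d^{k\text{-star}}_{x_1,\dots,x_k}$, which is continuous, and $F'$ closed with $\mu_F(F')>0$. Then $F'$ is compact, since it is a closed subset of the compact support, so $\Delta^{k\text{-star}}_{x_1,\dots,x_k}(F')=d^{k\text{-star}}_{x_1,\dots,x_k}(F')$ is compact and in particular Borel. The measure $\nu:=(d^{k\text{-star}}_{x_1,\dots,x_k})_*(\mu_{F'})$ is supported in this image, so
$$0<\mu_F(F')=\nu\bigl(d^{k\text{-star}}_{x_1,\dots,x_k}(F')\bigr)=\int_{d^{k\text{-star}}_{x_1,\dots,x_k}(F')} g'\,dx .$$
An integral over a Lebesgue-null set vanishes, so $\mathcal{L}^k\bigl(\Delta^{k\text{-star}}_{x_1,\dots,x_k}(F')\bigr)>0$.

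The only point needing care is measurability of the image, which Step 3 handles by compactness. For non-closed $F'$ one would instead pass to a compact subset of positive measure using inner regularity of $\mu_F$.
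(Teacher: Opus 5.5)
Your proof is correct and follows the paper's argument: the domination $(\Phi^k)_*(\mu_{F'})\le(\Phi^k)_*(\mu_F)$ gives a density $0\le g'\le g$ a.e., and positivity of the total mass $\mu_F(F')$ gives the rest. The only difference is in the last step, where you compute $\nu\bigl(d^{k\text{-star}}_{x_1,\dots,x_k}(F')\bigr)=\mu_F(F')$ directly, whereas the paper uses H\"older's inequality together with the inclusion of the support of the pushforward in $d^{k\text{-star}}_{x_1,\dots,x_k}(\text{supp}\,\mu_{F'})$; your route is slightly more elementary and avoids the support-theorem citation.
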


\begin{proof}
    For any Borel set $A\subset \R^k$, 
 \begin{align*} (\Phi^k)_{*}(\mu_{F'})(A)=&\mu_{F'} \left((\Phi^k)^{-1}(A)\right)=\mu_{F} \left(F'\cap(\Phi^k)^{-1}(A)\right)\\
        \leq &\mu_{F} \left((\Phi^k)^{-1}(A)\right)=(\Phi^k)_{*}(\mu_{F})(A).
    \end{align*}

    Therefore $\nu':=(\Phi^k)_{*}(\mu_{F'})\leq \nu=(\Phi^k)_{*}(\mu_{F})$. Then $\nu'$ is absolutely continuous with respect to $\nu$ and by the Radon-Nikodym theorem one has $0\leq \frac{d\nu'}{d\nu}\leq 1$, and consequently, $0\leq \frac{d\nu'}{dx}\in L^p$. 
  
Next, observe that if $\frac{d\nu'}{dx}\in L^p$ then its support must have positive measure. If $p=1$,

$$0<\mu_{F'}(F)=\int \frac{d\nu'}{dx}dx,$$
implies that directly. For $1<p\leq \infty$, it follows from H\"older's inequality since 
$$0<\mu_{F'}(F)=\int_{\R^k} \frac{d\nu'}{dx}dx\leq \left\|\frac{d\mu'}{dx}\right\|_{L^p(\R^k)}\mathcal{L}^k(\text{supp}(d\nu'/dx))^{1/p'}.$$

    The last claim follows from the fact that if $F'$ is closed then $(d^{k-star}_{x_1,x_2,\dots ,x_k})_{*}(\mu_{F'})$ has support contained in $d^{k-star}_{x_1,x_2,\dots ,x_k}(\text{supp} \mu_{F'})\subseteq d^{k-star}_{x_1,x_2,\dots ,x_k}(F')=\Delta^{k-star}_{x_1,\dots ,x_k}(F')$ (\cite[Theorem 1.18]{Mattilabookgeometry}).
\end{proof}

\begin{proof}[Proof of Theorem \ref{thm: firststructuralthm}]

    Fix an $k$-admissible ordering $\mathcal{O}=(v_1,v_2,\dots ,v_l)$ for the pinned graph $(G,\mathcal{P})$ where $\mathcal{P}=(v_1,v_2,\dots, v_m)$ are the pinned vertices. Notice that we can assume without loss of generality that $m\geq 1$ (at least one pinned vertex) because if $m=0$ (no pins), we can add a pin at $v_1$ and still have that $(G,\{v_1\})$ is $k$-admissible with $k$-admissible ordering $\mathcal{O}$.
    
    We are given a compact set $E$ with $\dim(E)>\frac{d+k}{2}$. Let $\mu$ be an $s$-Frostman measure on $E$ with $s>\frac{d+k}{2}$, and extract $l$ separated compact subsets of $E$, say $E_1, E_2, \dots ,E_l$.  From Theorem \ref{thm:inductionstepinstructuralthm1} we know that $\mu_{E_1}\times \mu_{E_2}\times\dots \times  \mu_{E_m}$ almost every $m$-tuple $(x_1,x_2,\dots ,x_m)\in E_1\times E_2\times \dots E_m$ is $(G,\mathcal{P},\mathcal{O},\{E_i\}_{i=1}^l)$ good. 

    We just need to check that if $x:=(x_1,x_2,\dots ,x_m)\in E_1\times E_2\times \dots E_m$ is $(G,\mathcal{P},\mathcal{O},\{E_i\}_{i=1}^l)$ good, then $\Delta^{G}_{\mathcal{P},x_1,\dots, x_k}(E_{m+1},\dots ,E_l)$  has positive measure. We have a sequence of sets $$(E_r)^G_{x,y_{(r-1)}}\subset E_r,\qquad{m+1\leq r\leq l-1}$$ provided from the definition of $(x_1,\dots ,x_m)$ being $(G,\mathcal{P},\mathcal{O},\{E_i\}_{i=1}^l)$ good. 

    \begin{equation*}
    \begin{split}
    &\mathcal{L}^{|\mathcal{E}|}(\Delta^G_{\mathcal{P},x}(E_{m+1},\dots ,E_l))\\
    \geq&\int_{\Delta_{(x_i)_{i\in \eta_{m+1,P}}}^{\epsilon_{m+1}-star}((E_{m+1})^G_{x})} {\int_{\Delta_{{(x_i)_{i\in\eta_{m+2,P}},(y_{j})_{j\in \eta_{m+2,U}}}}^{\epsilon_{m+2}-star}((E_{m+2})^G_{x,y_{(m+1)}})}} \\&{\int_{\Delta_{(x_i)_{i\in\eta_{m+3,P}},(y_{j})_{j\in \eta_{m+3,U}}}^{\epsilon_{m+3}-star}((E_{m+3})^G_{x,y_{(m+2)})})}} \cdots{\int_{\Delta_{(x_i)_{i\in \eta_{l,P}},(y_j)_{j\in \eta_{l,U}}}^{\epsilon_l-star}(E_l)}dT^l}
   \cdots dT^{m+3}T^{m+2}dT^{m+1},
    \end{split}
\end{equation*}
where $dT^{r}=dt^r_1dt^r_2\dots dt^r_{\epsilon_r}$, and for $m\leq r\leq l-1$ we consecutively choose $y_r\in (E_r)^G_{x,y_{(r-1)}}$ so that $d^{\epsilon_{r}-star}_{(x_i)_{i\in \eta_{r,P}},(y_j)_{j\in \eta_{r,U}}}(y_r)=(t^r_1,t^r_2\dots ,t^r_{\epsilon_r})$. Lemma \ref{Lemma: L2lemma} guarantees that in each step of the iterated integral, we are integrating over distance sets of positive measure. That forces the integral in the right-hand side to be positive.
\end{proof}

\begin{proof}[Proof of Theorem \ref{thm:inductionstepinstructuralthm1}]
    We will prove this theorem by induction on the number $n$ of unpinned vertices of the graph. 
    
    \textbf{Base case:} $n=1$. If the graph is connected, it would be exactly an $m$-star with an unpinned vertex $v_{m+1}$ connected to all $v_i$ with $1\leq i\leq m$. In that case, Theorem \ref{thm:k starinIPPS} gives us exactly the conclusion of this theorem. In the disconnected case, there would be at most $m'<m$ pinned vertices $(v_{i_1},v_{i_2},\dots ,v_{i_{m'}})$ connected to $v_{m+1}$ and again one can just use Theorem \ref{thm:k starinIPPS} for $m'$-stars to get that $\Pi_{j=1}^{m'}\mu_{E_{i_j}}$ almost everywhere $(x_{i_1},\dots ,x_{i_{m'}})$ (and no restrictions on the pins $x_i$ with $v_i$ that are not connected to $v_{m+1}$, so still a full measure set with respect to product measure).

    \textbf{Induction step}: Assume that $n\geq 2 $ and that the theorem is true for graphs with up to $n-1$ unpinned vertices. Denote by $(G',\mathcal{P})$ the pinned subgraph obtained by deleting $v_{l}$ (the last added vertex in the $k$-admissible ordering $\mathcal{O}$) and all adjacent edges from $G$; also, let $\mathcal{O}'=(v_1,v_2,\dots,v_{l-1})$ be the $k$-admissible ordering induced in $(G',\mathcal{P})$. Since $G'$ has $n-1$ unpinned vertices, we have that $\mu_{E_1}\times \mu_{E_2}\dots \times \mu_{E_{m}}$ almost every $(x_1,\dots ,x_m)\in E_1\times E_2\times \dots\times E_m$ is $(G',\mathcal{P},\mathcal{O}',\{E_i\}_{i=1}^{l-1})$ good. In other words, there is a subset $\mathcal{E}_m^{G'}\subset E_1\times \dots \times E_m$ such that $(\Pi_{i=1}^m\mu_i)((\mathcal{E}_m^{G'})^c)=0$ and every $x=(x_1,\dots ,x_m)\in \mathcal{E}_m^{G'}$ is good for $(G',\mathcal{P},\mathcal{O}',\{E_i\}_{i=1}^{l-1})$.

    From the definition of $x$ being good we have sets $(E_{m+1})^{G'}_x\subset E_{m+1}$ and for each $y_{m+1}\in (E_{m+1})^{G'}_x$ we have $(E_{m+2})^{G'}_{x,y_{m+1}}\subset E_{m+2}$, and more generally for any $m+1\leq r<l$ there are sets $(E_r)^{G'}_{x,y_{(r-1)}}\subset E_r$ defined as long as $y_{i}\in (E_i)^{G'}_{x,y(i-1)}$ for all $m+1\leq i<r$. In order to update them for $G$, we look at the vertices that are connected to $v_l$. There are $\epsilon_l$ such vertices in total given by indices in $\eta_l=\eta_{l,P}\bigsqcup \eta_{l,U}$. Moreover, from Theorem \ref{thm:k starinIPPS} applied to $\{E_i\colon i\in \eta_{l}\}$ and $E_l$ one has that for $\Pi_{i\in \eta_{l,P}}\mu_{E_i}\times \Pi_{j\in \eta_{j,U}}\mu_{E_j}$ almost every $((x_i)_{i\in \eta_{l,P} }, (y_j)_{j\in \eta_{l,U}})\in\Pi_{i\in \eta_{l,P}}E_i\times \Pi_{j\in \eta_{j,U}}E_j$, one has 
    $$(d^{\epsilon_l-star}_{(x_i)_{i\in \eta_{l,P} }, (y_j)_{j\in \eta_{l,U}}})_{*}(\mu_{E_{l}})\in L^2.$$

As a consequence, there is a set $\mathcal{E}_{\eta_{l,P}}\subset \Pi_{j\in \eta_{l.P}E_j}$ of full $ (\Pi_{j\in \eta_{l,P}}\mu_{E_j})$ measure such that for any $(x_j)_{j\in \eta_{l,P}}\in\mathcal{E}_{\eta_{l,P}}$ one has
$$(d^{\epsilon_l-star}_{(x_i)_{i\in \eta_{l,P} }, (y_j)_{j\in \eta_{l,U}}})_{*}(\mu_{E_{l}})\in L^2$$ 
for $(\Pi_{j\in \eta_{l,U}}\mu_j)$ almost every $(y_j)_{j\in \eta_{l,U}}\in \Pi_{j\in \eta_{l,U}}E_j$.

List the indices in $\eta_{l,U}$ in increasing order $i_1<i_2<\dots <i_{\epsilon_{l,U}}$. For each $x(l,P):=(x_j)_{j\in \eta_{l,P}}\in\mathcal{E}_{\eta_{l,P}}$ there exists a $\mu_{E_{i_1}}$-full measure set $(S_{i_1})_{x(l,P)}\subset E_{i_1}$ such that for all $y_{i_1}\in (S_{i_1})_{x(l,P)}$ there exists a $\mu_{E_{i_2}}$-full measure subset $(S_{i_2})_{x(l,P),y_{i_1}}\subset E_{i_2}$, and more generally for any $2\leq r< \epsilon_{l,U}$ there are sets $(S_{i_r})_{x(l,P),y_{i_1},\dots ,y_{i_{r-1}}}\subset E_{i_r}$ defined as long as $y_{i_j}\in (S_{i_j})_{x(l,P),y_{i_1},\dots, y_{i_{j-1}}}$ for all $m+1\leq j<r$.

%The fact that $\Pi_{i=1}^{m}\mu_{E_i}$ a.e. $(x_1,x_2,\dots ,x_m)$ is $(G',\mathcal{P},\mathcal{O}')$ good can be rewritten as 
%$$(\Pi_{i\in \eta_{l,P}}\mu_{E_i})\times (\Pi_{1\leq j\leq m\colon j\notin \eta_{l,P}}\mu_{E_j})$$ a.e. $((x_i)_{i\in \eta_{l,P}},(x_j)_{1\leq j\leq m\colon j\notin \eta_{l,P}})$ is $(G',\mathcal{P},\mathcal{O}')$ good.
    Let $$\mathcal{E}^G_m:=\{(x_1,\dots ,x_m)\in \mathcal{E}^{G'}_m\colon (x_i)_{i\in \eta_{l,P}}\in \mathcal{E}_{\eta_{l,P}}\} \subset E_1\times E_2\times \dots \times E_m.$$

    It is not hard to see that $(\Pi_{i=1}^m \mu_i) ((\mathcal{E}^G_m)^c)=0$. We claim that every $(x_1,x_2,\dots ,x_m)\in \mathcal{E}^G_m $ is $(G,\mathcal{P},\mathcal{O},\{E_i\}_{i=1}^l)$ good. For every $i_r\in \eta_{l,U}$ one needs to redefine $$(E_{i_r})^G_{x,y(i_r-1)}=(E_{i_r})^{G'}_{x,y(i_r-1)}\cap (S_{i_r})_{x(l,P),y_{i_1},\dots ,y_{i_{r-1}}}\subset E_{i_r},$$
    which still has full $\mu_{E_{i_r}}$-measure. If $(l-1)\in \eta_{l,U}$, we use $(E_{l-1})^{G'}_{x,y(l-2)}:=E_{l-1}$.

    For $m+1\leq j\leq l-1$ such that $j\notin \eta_{l,U}$, we essentially keep it as it was for $G'$, namely
    $$(E_{j})^G_{x,y(j-1)}=(E_{j})^{G'}_{x,y(j-1)}\subset E_{j}, $$
    defined whenever $y_i\in (E_i)^G_{x,y(i-1)}$, for all $1\leq i
<j$.
\end{proof}

The proof of Theorem \ref{thm: secondstructuralthm} follows the same lines as Theorem \ref{thm: firststructuralthm}. We discuss below the main modifications needed.

\begin{proof}[Sketch of the proof of Theorem \ref{thm: secondstructuralthm}]
   As in the proof of Theorem \ref{thm: firststructuralthm}, we can assume without loss of generality that $m\geq 1$. Let $\Phi=(\Phi_{(v_i,v_j)})_{(v_i,v_j)\in \mathcal{E}}$ be the continuous edge vector function of interest. Fix $\mathcal{O}=(v_1,v_2,\dots ,v_m,v_{m+1},\dots ,v_l)$, a construction order for $(G,\mathcal{P})$ that is $\alpha$-suitable for the stars in $(G,\mathcal{P})$ in the sense of Definition \ref{def: suitableforstars}. For each $m+1\leq r\leq l$, let $\epsilon_r$ be the back-degree of $v_r$ according to the ordering $\mathcal{O}$. Let $\eta_r,\eta_{r,P},\eta_{r,U}$ be as before. Define
    
    $$\Phi^r=(\Phi_{(v_i,v_r)})_{i\in \eta_{r}}, m+1\leq r\leq l.$$
In words, $\Phi^r$ is a function with $\epsilon_r$ components of $\Phi$ corresponding to edges between $v_r$ and any other vertex that shares an edge with $v_r$ and which came before in the ordering $\mathcal{O}$.

Define $$d^{\epsilon_r-star,\Phi^r}_{(z_i)_{i\in \eta_r}}(x_r):=(\Phi_{(v_i,v_r)}(x_i,x_r))_{i\in \eta_r}.$$

    We can say that  an $m$-tuple of pins $(x_1,\ldots,x_m)\in E_1\times E_2\times \dots\times  E_m$ is $(G,\mathcal{P},\mathcal{O},\{E_i\}_{i=1}^l, \Phi)$-good if it satisfies the conditions Definition \ref{def: goodpin} with $\left(d_{(x_i)_{i\in\eta_{r,P}},(y_j)_{j\in \eta_{r, U}}}^{\epsilon_{r}-star}\right)_*(\mu_{E_{r}})$ replaced with $$\left(d_{(x_i)_{i\in\eta_{r,P}},(y_j)_{j\in \eta_{r, U}}}^{\epsilon_{r}-star,\Phi^r}\right)_*(\mu_{E_{r}}), m+1\leq r\leq l,$$
and $L^2$ replaced by $L^p$. 
    With proof analogous to the proof of Theorem \ref{thm:inductionstepinstructuralthm1} (by induction of the number of unpinned vertices) one can prove that $(\Pi_{i=1}^{m}\mu_{E_i})$ almost everywhere $m$-tuple $(x_1,\dots,x_m)\in \Pi_{i=1}^{m} E_i$ is $(G,\mathcal{P},\mathcal{O},\{E_i\}_{i=1}^l, \Phi)$-good. For that, it is interesting to observe that by deleting the last unpinned vertex of an $\alpha$-suitable ordering for $G$, we have $\mathcal{O'}=(v_1,\dots ,v_m, v_{m+1}, \dots,v_{l-1})$ is an $\alpha$ suitable construction order for the pinned graph obtained by deleting from $G$ the vertex $v_l$ and all its adjacent vertices.

    As in Theorem \ref{thm: firststructuralthm}, by using Fubini combined with Lemma \ref{Lemma: L2lemma}, every $(G,\mathcal{P},\mathcal{O},\{E_i\}_{i=1}^l, \Phi)$-good $ m$-tuple of pins $(x_1,\dots ,x_m)$ will produce positive measure for the pinned distance set for $\Delta^{G,\Phi}_{\mathcal{P},x_1,\cdots, x_m}(E)$.
\end{proof}

\vskip1.5em

\printbibliography

@misc{pinnedtrees,
      title={Nonempty interior of pinned distance and tree sets}, 
      author={Tainara Borges and Benjamin Foster and Yumeng Ou and Eyvindur Palsson},
      year={2025},
      eprint={2503.15709},
      archivePrefix={arXiv},
      primaryClass={math.CA},
      url={https://arxiv.org/abs/2503.15709}, 
}

@article {IPPS22,
    AUTHOR = {Iosevich, Alex and Pham, Minh-Quy and Pham, Thang and Shen,
              Chun-Yen},
     TITLE = {Pinned simplices and connections to product of sets on
              paraboloids},
   JOURNAL = {Indiana Univ. Math. J.},
  FJOURNAL = {Indiana University Mathematics Journal},
    VOLUME = {74},
      YEAR = {2025},
    NUMBER = {3},
     PAGES = {647--668},
      ISSN = {0022-2518,1943-5258},
   MRCLASS = {52C10 (28A80 42B10)},
  MRNUMBER = {4946877},
}

@InProceedings{CIMP2021,
author="Chatzikonstantinou, Nikolaos
and Iosevich, Alex
and Mkrtchyan, Sevak
and Pakianathan, Jonathan",
editor="Nathanson, Melvyn B.",
title="Rigidity, Graphs and Hausdorff Dimension",
booktitle="Combinatorial and Additive Number Theory IV",
year="2021",
publisher="Springer International Publishing",
address="Cham",
pages="73--106",
isbn="978-3-030-67996-5"
}

@article {PS00,
    AUTHOR = {Peres, Yuval and Schlag, Wilhelm},
     TITLE = {Smoothness of projections, {B}ernoulli convolutions, and the
              dimension of exceptions},
   JOURNAL = {Duke Math. J.},
  FJOURNAL = {Duke Mathematical Journal},
    VOLUME = {102},
      YEAR = {2000},
    NUMBER = {2},
     PAGES = {193--251},
      ISSN = {0012-7094,1547-7398},
   MRCLASS = {42B25 (28A78)},
  MRNUMBER = {1749437},
MRREVIEWER = {Esa\ J\"arvenp\"a\"a},
       DOI = {10.1215/S0012-7094-00-10222-0},
       URL = {https://doi.org/10.1215/S0012-7094-00-10222-0},
}

@article {Falconer85,
    AUTHOR = {Falconer, K. J.},
     TITLE = {On the {H}ausdorff dimensions of distance sets},
   JOURNAL = {Mathematika},
  FJOURNAL = {Mathematika. A Journal of Pure and Applied Mathematics},
    VOLUME = {32},
      YEAR = {1985},
    NUMBER = {2},
     PAGES = {206--212},
      ISSN = {0025-5793},
   MRCLASS = {28A75 (28A05)},
  MRNUMBER = {834490},
MRREVIEWER = {S.\ J.\ Taylor},
       DOI = {10.1112/S0025579300010998},
       URL = {https://doi.org/10.1112/S0025579300010998},
}

@misc{DORZ23,
      title={Weighted refined decoupling estimates and application to {F}alconer distance set problem}, 
      author={Du, Xiumin and Ou, Yumeng and Ren, Kevin and Zhang, Ruixiang},
      year={2023},
      eprint={2309.04501},
      archivePrefix={arXiv},
      primaryClass={math.CA},
      url={https://arxiv.org/abs/2309.04501}, 
}

@article {GIOW20,
    AUTHOR = {Guth, Larry and Iosevich, Alex and Ou, Yumeng and Wang, Hong},
     TITLE = {On {F}alconer's distance set problem in the plane},
   JOURNAL = {Invent. Math.},
  FJOURNAL = {Inventiones Mathematicae},
    VOLUME = {219},
      YEAR = {2020},
    NUMBER = {3},
     PAGES = {779--830},
      ISSN = {0020-9910,1432-1297},
   MRCLASS = {42B20 (28A80)},
  MRNUMBER = {4055179},
MRREVIEWER = {Jonathan\ MacDonald\ Fraser},
       DOI = {10.1007/s00222-019-00917-x},
       URL = {https://doi.org/10.1007/s00222-019-00917-x},
}

@article {Liu19,
    AUTHOR = {Liu, Bochen},
     TITLE = {An {$L^2$}-identity and pinned distance problem},
   JOURNAL = {Geom. Funct. Anal.},
  FJOURNAL = {Geometric and Functional Analysis},
    VOLUME = {29},
      YEAR = {2019},
    NUMBER = {1},
     PAGES = {283--294},
      ISSN = {1016-443X,1420-8970},
   MRCLASS = {28A75 (28A78 42B10)},
  MRNUMBER = {3925111},
MRREVIEWER = {Lars\ Olsen},
       DOI = {10.1007/s00039-019-00482-8},
       URL = {https://doi.org/10.1007/s00039-019-00482-8},
}

@article {MattilaSjolin,
    AUTHOR = {Mattila, Pertti and Sj\"olin, Per},
     TITLE = {Regularity of distance measures and sets},
   JOURNAL = {Math. Nachr.},
  FJOURNAL = {Mathematische Nachrichten},
    VOLUME = {204},
      YEAR = {1999},
     PAGES = {157--162},
      ISSN = {0025-584X,1522-2616},
   MRCLASS = {42B10 (28A75)},
  MRNUMBER = {1705134},
MRREVIEWER = {Joan\ Orobitg},
       DOI = {10.1002/mana.3212040110},
       URL = {https://doi.org/10.1002/mana.3212040110},
}

@book {Mattilabook2015,
    AUTHOR = {Mattila, Pertti},
     TITLE = {Fourier analysis and {H}ausdorff dimension},
    SERIES = {Cambridge Studies in Advanced Mathematics},
    VOLUME = {150},
 PUBLISHER = {Cambridge University Press, Cambridge},
      YEAR = {2015},
     PAGES = {xiv+440},
      ISBN = {978-1-107-10735-9},
   MRCLASS = {28-02 (28A15 28A78 28A80 42B10 60J65)},
  MRNUMBER = {3617376},
MRREVIEWER = {Benjamin\ Steinhurst},
       DOI = {10.1017/CBO9781316227619},
       URL = {https://doi.org/10.1017/CBO9781316227619},
}

@article {IMT12,
    AUTHOR = {Iosevich, Alex and Mourgoglou, Mihalis and Taylor, Krystal},
     TITLE = {On the {M}attila-{S}j\"olin theorem for distance sets},
   JOURNAL = {Ann. Acad. Sci. Fenn. Math.},
  FJOURNAL = {Annales Academi\ae\ Scientiarum Fennic\ae. Mathematica},
    VOLUME = {37},
      YEAR = {2012},
    NUMBER = {2},
     PAGES = {557--562},
      ISSN = {1239-629X,1798-2383},
   MRCLASS = {28A75 (42B20 52C10)},
  MRNUMBER = {2987085},
MRREVIEWER = {Alain\ Rivi\`ere},
       DOI = {10.5186/aasfm.2012.3732},
       URL = {https://doi.org/10.5186/aasfm.2012.3732},
}

@article {GILP15,
    AUTHOR = {Greenleaf, Allan and Iosevich, Alex and Liu, Bochen and
              Palsson, Eyvindur},
     TITLE = {A group-theoretic viewpoint on {E}rd\"os-{F}alconer problems
              and the {M}attila integral},
   JOURNAL = {Rev. Mat. Iberoam.},
  FJOURNAL = {Revista Matem\'atica Iberoamericana},
    VOLUME = {31},
      YEAR = {2015},
    NUMBER = {3},
     PAGES = {799--810},
      ISSN = {0213-2230,2235-0616},
   MRCLASS = {42B20 (52C10)},
  MRNUMBER = {3420476},
MRREVIEWER = {Tuomas\ P.\ Hyt\"onen},
       DOI = {10.4171/RMI/854},
       URL = {https://doi.org/10.4171/RMI/854},
}

@book {Mattilabookgeometry,
    AUTHOR = {Mattila, Pertti},
     TITLE = {Geometry of sets and measures in {E}uclidean spaces},
    SERIES = {Cambridge Studies in Advanced Mathematics},
    VOLUME = {44},
      NOTE = {Fractals and rectifiability},
 PUBLISHER = {Cambridge University Press, Cambridge},
      YEAR = {1995},
     PAGES = {xii+343},
      ISBN = {0-521-46576-1; 0-521-65595-1},
   MRCLASS = {28A75 (49Q20)},
  MRNUMBER = {1333890},
MRREVIEWER = {Harold\ Parks},
       DOI = {10.1017/CBO9780511623813},
       URL = {https://doi.org/10.1017/CBO9780511623813},
}

@article {EIT11,
    AUTHOR = {Eswarathasan, Suresh and Iosevich, Alex and Taylor, Krystal},
     TITLE = {Fourier integral operators, fractal sets, and the regular
              value theorem},
   JOURNAL = {Adv. Math.},
  FJOURNAL = {Advances in Mathematics},
    VOLUME = {228},
      YEAR = {2011},
    NUMBER = {4},
     PAGES = {2385--2402},
      ISSN = {0001-8708,1090-2082},
   MRCLASS = {42B08 (28A80 44A12)},
  MRNUMBER = {2836125},
MRREVIEWER = {Herv\'e\ Queff\'elec},
       DOI = {10.1016/j.aim.2011.07.012},
       URL = {https://doi.org/10.1016/j.aim.2011.07.012},
}

@misc{BMS24,
title={Pinned Dot Product Set Estimates}, 
      author={Paige Bright and Caleb Marshall and Steven Senger},
      year={2024},
      eprint={2412.17985},
      archivePrefix={arXiv},
      primaryClass={math.CA},
      url={https://arxiv.org/abs/2412.17985},
}

@article {GIT21,
    AUTHOR = {Greenleaf, Allan and Iosevich, Alex and Taylor, Krystal},
     TITLE = {Configuration sets with nonempty interior},
   JOURNAL = {J. Geom. Anal.},
  FJOURNAL = {Journal of Geometric Analysis},
    VOLUME = {31},
      YEAR = {2021},
    NUMBER = {7},
     PAGES = {6662--6680},
      ISSN = {1050-6926,1559-002X},
   MRCLASS = {28A80 (35S30 44A12)},
  MRNUMBER = {4289240},
       DOI = {10.1007/s12220-019-00288-y},
       URL = {https://doi.org/10.1007/s12220-019-00288-y},
}

@article {GI12,
    AUTHOR = {Greenleaf, Allan and Iosevich, Alex},
     TITLE = {On triangles determined by subsets of the {E}uclidean plane,
              the associated bilinear operators and applications to discrete
              geometry},
   JOURNAL = {Anal. PDE},
  FJOURNAL = {Analysis \& PDE},
    VOLUME = {5},
      YEAR = {2012},
    NUMBER = {2},
     PAGES = {397--409},
      ISSN = {2157-5045,1948-206X},
   MRCLASS = {42B15 (52C10)},
  MRNUMBER = {2970712},
MRREVIEWER = {Andreas\ Seeger},
       DOI = {10.2140/apde.2012.5.397},
       URL = {https://doi.org/10.2140/apde.2012.5.397},
}

@book {BMP05,
    AUTHOR = {Brass, Peter and Moser, William and Pach, J\'anos},
     TITLE = {Research problems in discrete geometry},
 PUBLISHER = {Springer, New York},
      YEAR = {2005},
     PAGES = {xii+499},
      ISBN = {978-0387-23815-8; 0-387-23815-8},
   MRCLASS = {52-02 (05-02)},
  MRNUMBER = {2163782},
MRREVIEWER = {W.\ Kuperberg},
}

@incollection {EHI13,
    AUTHOR = {Erdogan, Burak and Hart, Derrick and Iosevich, Alex},
     TITLE = {Multiparameter projection theorems with applications to
              sums-products and finite point configurations in the
              {E}uclidean setting},
 BOOKTITLE = {Recent advances in harmonic analysis and applications},
    SERIES = {Springer Proc. Math. Stat.},
    VOLUME = {25},
     PAGES = {93--103},
 PUBLISHER = {Springer, New York},
      YEAR = {2013},
      ISBN = {978-1-4614-4565-4; 978-1-4614-4564-7},
   MRCLASS = {28A80 (42B08)},
  MRNUMBER = {3066881},
MRREVIEWER = {Li-Feng\ Xi},
       DOI = {10.1007/978-1-4614-4565-4\_11},
       URL = {https://doi.org/10.1007/978-1-4614-4565-4_11},
}

@article {PRA23,
    AUTHOR = {Palsson, Eyvindur Ari and Romero Acosta, Francisco},
     TITLE = {A {M}attila-{S}j\"olin theorem for triangles},
   JOURNAL = {J. Funct. Anal.},
  FJOURNAL = {Journal of Functional Analysis},
    VOLUME = {284},
      YEAR = {2023},
    NUMBER = {6},
     PAGES = {Paper No. 109814, 20},
      ISSN = {0022-1236,1096-0783},
   MRCLASS = {28A78 (42B20 52C10)},
  MRNUMBER = {4530888},
MRREVIEWER = {Vladimir\ Eiderman},
       DOI = {10.1016/j.jfa.2022.109814},
       URL = {https://doi.org/10.1016/j.jfa.2022.109814},
}

@article {PRA25,
    AUTHOR = {Palsson, Eyvindur Ari and Romero Acosta, Francisco},
     TITLE = {A {M}attila-{S}j\"olin theorem for simplices in low
              dimensions},
   JOURNAL = {Math. Ann.},
  FJOURNAL = {Mathematische Annalen},
    VOLUME = {391},
      YEAR = {2025},
    NUMBER = {1},
     PAGES = {1123--1146},
      ISSN = {0025-5831,1432-1807},
   MRCLASS = {28A75 (28A78 42B20 52A20)},
  MRNUMBER = {4846807},
MRREVIEWER = {Bochen\ Liu},
       DOI = {10.1007/s00208-024-02948-z},
       URL = {https://doi.org/10.1007/s00208-024-02948-z},
}

@article {GGIP15,
    AUTHOR = {Grafakos, Loukas and Greenleaf, Allan and Iosevich, Alex and
              Palsson, Eyvindur},
     TITLE = {Multilinear generalized {R}adon transforms and point
              configurations},
   JOURNAL = {Forum Math.},
  FJOURNAL = {Forum Mathematicum},
    VOLUME = {27},
      YEAR = {2015},
    NUMBER = {4},
     PAGES = {2323--2360},
      ISSN = {0933-7741,1435-5337},
   MRCLASS = {42B15 (05D05)},
  MRNUMBER = {3365800},
       DOI = {10.1515/forum-2013-0128},
       URL = {https://doi.org/10.1515/forum-2013-0128},
}

@article {GIT22,
    AUTHOR = {Greenleaf, Allan and Iosevich, Alex and Taylor, Krystal},
     TITLE = {On {$k$}-point configuration sets with nonempty interior},
   JOURNAL = {Mathematika},
  FJOURNAL = {Mathematika. A Journal of Pure and Applied Mathematics},
    VOLUME = {68},
      YEAR = {2022},
    NUMBER = {1},
     PAGES = {163--190},
      ISSN = {0025-5793,2041-7942},
   MRCLASS = {28A75 (28A80 52C10 58J40)},
  MRNUMBER = {4405974},
MRREVIEWER = {Xiumin\ Du},
       DOI = {10.1112/mtk.12114},
       URL = {https://doi.org/10.1112/mtk.12114},
}

@article {GIT24,
    AUTHOR = {Greenleaf, Allan and Iosevich, Alex and Taylor, Krystal},
     TITLE = {Nonempty interior of configuration sets via microlocal
              partition optimization},
   JOURNAL = {Math. Z.},
  FJOURNAL = {Mathematische Zeitschrift},
    VOLUME = {306},
      YEAR = {2024},
    NUMBER = {4},
     PAGES = {Paper No. 66, 20},
      ISSN = {0025-5874,1432-1823},
   MRCLASS = {28A75 (28A80 52C10 58J40)},
  MRNUMBER = {4716767},
MRREVIEWER = {Stefan\ Steinerberger},
       DOI = {10.1007/s00209-024-03466-z},
       URL = {https://doi.org/10.1007/s00209-024-03466-z},
}

@article {GIT25,
    AUTHOR = {Greenleaf, Allan and Iosevich, Alex and Taylor, Krystal},
     TITLE = {Realizing trees of configurations in thin sets},
   JOURNAL = {Pacific J. Math.},
  FJOURNAL = {Pacific Journal of Mathematics},
    VOLUME = {335},
      YEAR = {2025},
    NUMBER = {2},
     PAGES = {355--372},
      ISSN = {0030-8730,1945-5844},
   MRCLASS = {28A75 (42B35)},
  MRNUMBER = {4904870},
       DOI = {10.2140/pjm.2025.335.355},
       URL = {https://doi.org/10.2140/pjm.2025.335.355},
}

@article {GIP17,
    AUTHOR = {Greenleaf, Allan and Iosevich, Alex and Pramanik, Malabika},
     TITLE = {On necklaces inside thin subsets of {$\Bbb R^d$}},
   JOURNAL = {Math. Res. Lett.},
  FJOURNAL = {Mathematical Research Letters},
    VOLUME = {24},
      YEAR = {2017},
    NUMBER = {2},
     PAGES = {347--362},
      ISSN = {1073-2780,1945-001X},
   MRCLASS = {28A80 (43A46)},
  MRNUMBER = {3685274},
       DOI = {10.4310/MRL.2017.v24.n2.a4},
       URL = {https://doi.org/10.4310/MRL.2017.v24.n2.a4},
}

@article {Matula,
    AUTHOR = {Matula, David W. and Beck, Leland L.},
     TITLE = {Smallest-last ordering and clustering and graph coloring
              algorithms},
   JOURNAL = {J. Assoc. Comput. Mach.},
  FJOURNAL = {Journal of the Association for Computing Machinery},
    VOLUME = {30},
      YEAR = {1983},
    NUMBER = {3},
     PAGES = {417--427},
      ISSN = {0004-5411,1557-735X},
   MRCLASS = {68E10 (05C15 68C25)},
  MRNUMBER = {709826},
MRREVIEWER = {Charles\ J.\ Colbourn},
       DOI = {10.1145/2402.322385},
       URL = {https://doi.org/10.1145/2402.322385},
}

@article {McDonald21,
    AUTHOR = {McDonald, Alex},
     TITLE = {Areas spanned by point configurations in the plane},
   JOURNAL = {Proc. Amer. Math. Soc.},
  FJOURNAL = {Proceedings of the American Mathematical Society},
    VOLUME = {149},
      YEAR = {2021},
    NUMBER = {5},
     PAGES = {2035--2049},
      ISSN = {0002-9939,1088-6826},
   MRCLASS = {28A75},
  MRNUMBER = {4232196},
       DOI = {10.1090/proc/15348},
       URL = {https://doi.org/10.1090/proc/15348},
}

@article {GM22,
    AUTHOR = {Galo, Belmiro and McDonald, Alex},
     TITLE = {Volumes spanned by {$k$}-point configurations in {$\Bbb R^d$}},
   JOURNAL = {J. Geom. Anal.},
  FJOURNAL = {Journal of Geometric Analysis},
    VOLUME = {32},
      YEAR = {2022},
    NUMBER = {1},
     PAGES = {Paper No. 23, 26},
      ISSN = {1050-6926,1559-002X},
   MRCLASS = {28A75},
  MRNUMBER = {4349927},
MRREVIEWER = {Stefan\ Steinerberger},
       DOI = {10.1007/s12220-021-00763-5},
       URL = {https://doi.org/10.1007/s12220-021-00763-5},
}

@article {SY25,
    AUTHOR = {Shmerkin, Pablo and Yavicoli, Alexia},
     TITLE = {On the volumes of simplices determined by a subset of {$\Bbb
              R^d$}},
   JOURNAL = {Ann. Fenn. Math.},
  FJOURNAL = {Annales Fennici Mathematici},
    VOLUME = {50},
      YEAR = {2025},
    NUMBER = {1},
     PAGES = {97--108},
      ISSN = {2737-0690,2737-114X},
   MRCLASS = {28A78 (11B25 28A12 28A80)},
  MRNUMBER = {4877951},
       DOI = {10.54330/afm.159807},
       URL = {https://doi.org/10.54330/afm.159807},
}

@article {Harangi11,
    AUTHOR = {Harangi, Viktor},
     TITLE = {Large dimensional sets not containing a given angle},
   JOURNAL = {Cent. Eur. J. Math.},
  FJOURNAL = {Central European Journal of Mathematics},
    VOLUME = {9},
      YEAR = {2011},
    NUMBER = {4},
     PAGES = {757--764},
      ISSN = {1895-1074,1644-3616},
   MRCLASS = {28A78 (28A80)},
  MRNUMBER = {2805308},
MRREVIEWER = {John\ A.\ Rock},
       DOI = {10.2478/s11533-011-0043-x},
       URL = {https://doi.org/10.2478/s11533-011-0043-x},
}

@article {HKKMMMS13,
    AUTHOR = {Harangi, Viktor and Keleti, Tam\'as and Kiss, Gergely and
              Maga, P\'eter and M\'ath\'e, Andr\'as and Mattila, Pertti and
              Strenner, Bal\'azs},
     TITLE = {How large dimension guarantees a given angle?},
   JOURNAL = {Monatsh. Math.},
  FJOURNAL = {Monatshefte f\"ur Mathematik},
    VOLUME = {171},
      YEAR = {2013},
    NUMBER = {2},
     PAGES = {169--187},
      ISSN = {0026-9255,1436-5081},
   MRCLASS = {28A78},
  MRNUMBER = {3077930},
MRREVIEWER = {Alain\ Rivi\`ere},
       DOI = {10.1007/s00605-012-0455-0},
       URL = {https://doi.org/10.1007/s00605-012-0455-0},
}

@article {IMP16,
    AUTHOR = {Iosevich, Alex and Mourgoglou, Mihalis and Palsson, Eyvindur
              Ari},
     TITLE = {On angles determined by fractal subsets of the {E}uclidean
              space},
   JOURNAL = {Math. Res. Lett.},
  FJOURNAL = {Mathematical Research Letters},
    VOLUME = {23},
      YEAR = {2016},
    NUMBER = {6},
     PAGES = {1737--1759},
      ISSN = {1073-2780,1945-001X},
   MRCLASS = {28A80 (52C10)},
  MRNUMBER = {3621105},
MRREVIEWER = {Maria\ Moszy\'nska},
       DOI = {10.4310/MRL.2016.v23.n6.a8},
       URL = {https://doi.org/10.4310/MRL.2016.v23.n6.a8},
}

@article {Mathe17,
    AUTHOR = {M\'{a}th\'e, Andr\'{a}s},
     TITLE = {Sets of large dimension not containing polynomial
              configurations},
   JOURNAL = {Adv. Math.},
  FJOURNAL = {Advances in Mathematics},
    VOLUME = {316},
      YEAR = {2017},
     PAGES = {691--709},
      ISSN = {0001-8708,1090-2082},
   MRCLASS = {28A78},
  MRNUMBER = {3672917},
MRREVIEWER = {Jonathan\ MacDonald\ Fraser},
       DOI = {10.1016/j.aim.2017.01.002},
       URL = {https://doi.org/10.1016/j.aim.2017.01.002},
}

@article {BIT16,
    AUTHOR = {Bennett, Michael and Iosevich, Alexander and Taylor, Krystal},
     TITLE = {Finite chains inside thin subsets of {$\Bbb{R}^d$}},
   JOURNAL = {Anal. PDE},
  FJOURNAL = {Analysis \& PDE},
    VOLUME = {9},
      YEAR = {2016},
    NUMBER = {3},
     PAGES = {597--614},
      ISSN = {2157-5045,1948-206X},
   MRCLASS = {28A75 (42B10)},
  MRNUMBER = {3518531},
MRREVIEWER = {Gareth\ Speight},
       DOI = {10.2140/apde.2016.9.597},
       URL = {https://doi.org/10.2140/apde.2016.9.597},
}

@incollection {IT19,
    AUTHOR = {Iosevich, A. and Taylor, K.},
     TITLE = {Finite trees inside thin subsets of {$\Bbb R^d$}},
 BOOKTITLE = {Modern methods in operator theory and harmonic analysis},
    SERIES = {Springer Proc. Math. Stat.},
    VOLUME = {291},
     PAGES = {51--56},
 PUBLISHER = {Springer, Cham},
      YEAR = {2019},
      ISBN = {978-3-030-26748-3; 978-3-030-26747-6},
   MRCLASS = {42B10 (05C05 28A80)},
  MRNUMBER = {4008977},
MRREVIEWER = {Peter\ R.\ Massopust},
       DOI = {10.1007/978-3-030-26748-3\_3},
       URL = {https://doi.org/10.1007/978-3-030-26748-3_3},
}

@article {OT22,
    AUTHOR = {Ou, Yumeng and Taylor, Krystal},
     TITLE = {Finite point configurations and the regular value theorem in a
              fractal setting},
   JOURNAL = {Indiana Univ. Math. J.},
  FJOURNAL = {Indiana University Mathematics Journal},
    VOLUME = {71},
      YEAR = {2022},
    NUMBER = {4},
     PAGES = {1707--1761},
      ISSN = {0022-2518,1943-5258},
   MRCLASS = {42B10 (28A78 52C10)},
  MRNUMBER = {4481098},
MRREVIEWER = {Rami\ Ayoush},
}

@misc{IMMM25,
      title={The VC-dimension and point configurations in $\mathbb{R}^d$}, 
      author={Alex Iosevich and Akos Magyar and Alex McDonald and Brian McDonald},
      year={2025},
      eprint={2510.13984},
      archivePrefix={arXiv},
      primaryClass={math.CA},
      url={https://arxiv.org/abs/2510.13984}, 
}

@article {LW70,
    AUTHOR = {Lick, Don R. and White, Arthur T.},
     TITLE = {{$k$}-degenerate graphs},
   JOURNAL = {Canadian J. Math.},
  FJOURNAL = {Canadian Journal of Mathematics. Journal Canadien de
              Math\'ematiques},
    VOLUME = {22},
      YEAR = {1970},
     PAGES = {1082--1096},
      ISSN = {0008-414X,1496-4279},
   MRCLASS = {05.55},
  MRNUMBER = {266812},
MRREVIEWER = {H.\ V.\ Kronk},
       DOI = {10.4153/CJM-1970-125-1},
       URL = {https://doi.org/10.4153/CJM-1970-125-1},
}

@article {IJMD21,
    AUTHOR = {Iosevich, A. and Jardine, G. and McDonald, B.},
     TITLE = {Cycles of arbitrary length in distance graphs on {$\Bbb
              F_q^d$}},
      NOTE = {English version published in Proc. Steklov Inst. Math. {\bf
              314} (2021), no. 1, 27--43.},
   JOURNAL = {Tr. Mat. Inst. Steklova},
  FJOURNAL = {Trudy Matematicheskogo Instituta Imeni V. A. Steklova},
    VOLUME = {314},
      YEAR = {2021},
     PAGES = {31--48},
      ISSN = {0371-9685,3034-1809},
   MRCLASS = {05C12 (11T24)},
  MRNUMBER = {4324083},
       DOI = {10.4213/tm4189},
       URL = {https://doi.org/10.4213/tm4189},
}

@article {IP19,
    AUTHOR = {Iosevich, Alex and Parshall, Hans},
     TITLE = {Embedding distance graphs in finite field vector spaces},
   JOURNAL = {J. Korean Math. Soc.},
  FJOURNAL = {Journal of the Korean Mathematical Society},
    VOLUME = {56},
      YEAR = {2019},
    NUMBER = {6},
     PAGES = {1515--1528},
      ISSN = {0304-9914,2234-3008},
   MRCLASS = {52C10 (05C62 11T23)},
  MRNUMBER = {4015984},
MRREVIEWER = {Iskander\ Aliev},
       DOI = {10.4134/JKMS.j180776},
       URL = {https://doi.org/10.4134/JKMS.j180776},
}

@article {BFHIJPS24,
    AUTHOR = {Bright, Paige and Fang, Xinyu and Heritage, Barrett and
              Iosevich, Alex and Jiang, Tingsong and Parshall, Hans and Sun,
              Maxwell},
     TITLE = {Generalized point configurations in {$\Bbb F_q^d$}},
   JOURNAL = {Finite Fields Appl.},
  FJOURNAL = {Finite Fields and their Applications},
    VOLUME = {99},
      YEAR = {2024},
     PAGES = {Paper No. 102472, 28},
      ISSN = {1071-5797,1090-2465},
   MRCLASS = {52C10 (11T23)},
  MRNUMBER = {4783545},
       DOI = {10.1016/j.ffa.2024.102472},
       URL = {https://doi.org/10.1016/j.ffa.2024.102472},
}

@article {LM20,
    AUTHOR = {Lyall, Neil and Magyar, \'Akos},
     TITLE = {Distance graphs and sets of positive upper density in
              {$\Bbb{R}^d$}},
   JOURNAL = {Anal. PDE},
  FJOURNAL = {Analysis \& PDE},
    VOLUME = {13},
      YEAR = {2020},
    NUMBER = {3},
     PAGES = {685--700},
      ISSN = {2157-5045,1948-206X},
   MRCLASS = {11B30 (05C55 05C62 28A75 52C10)},
  MRNUMBER = {4085119},
MRREVIEWER = {Hans\ Parshall},
       DOI = {10.2140/apde.2020.13.685},
       URL = {https://doi.org/10.2140/apde.2020.13.685},
}

@misc{BFOPRA26,
      title={On volume vectors determined by hypergraphs in thin subsets of Euclidean space}, 
      author={Tainara Borges and Benjamin Foster and Yumeng Ou and Eyvindur Palsson and Francisco {Romero Acosta}},
      year={2026},
      howpublished={preprint},
}
%\bibliographystyle{alpha}
%\bibliography{sources}

%\vskip5em

\end{document}